\newcommand*\patchAmsMathEnvironmentForLineno[1]{%
  \expandafter\let\csname old#1\expandafter\endcsname\csname #1\endcsname
  \expandafter\let\csname oldend#1\expandafter\endcsname\csname end#1\endcsname
  \renewenvironment{#1}%
     {\linenomath\csname old#1\endcsname}%
     {\csname oldend#1\endcsname\endlinenomath}}%
\newcommand*\patchBothAmsMathEnvironmentsForLineno[1]{%
  \patchAmsMathEnvironmentForLineno{#1}%
  \patchAmsMathEnvironmentForLineno{#1*}}%
\newtheorem{theorem}{Theorem}[section]
\newtheorem{lemma}[theorem]{Lemma}
\newtheorem{proposition}[theorem]{Proposition}
\newtheorem{observation}[theorem]{Observation}
\theoremstyle{definition}
\newtheorem{definition}[theorem]{Definition}
\newtheorem{problem}[theorem]{Problem}
\newcommand{\Av}{\mathrm{Av}}
\newcommand{\cC}{\mathcal{C}}
\newcommand{\C}[2]{\cC_{#1,#2}}
\newcommand{\Cplus}{\cC^{\times}_{1,1}}
\newcommand{\btau}{\overline\tau}
\newcommand{\xt}{X^\text{T}}
\newcommand{\xf}{X^\text{F}}
\newcommand{\xp}{X^+}
\newcommand{\xm}{X^-}
\newcommand{\mo}[1]{\widetilde{#1}}
\newcommand{\moQ}{\mo{Q}}
\newcommand{\moP}{\mo{P}}
\newcommand{\moxt}{\mo{X}^\text{T}}
\newcommand{\moxf}{\mo{X}^\text{F}}
\newcommand{\moxp}{\mo{X}^+}
\newcommand{\moxm}{\mo{X}^-}
\newcommand{\prob}[3]{
    \begin{center}
      \fbox{
        \parbox{0.95\textwidth}{
          #1\\
          \begin{tabular}{rp{0.73\textwidth}}
            \textit{Instance:\ } & #2\\
            \textit{Question:\ } & #3
          \end{tabular}
        }
      }
    \end{center}
}
\DeclareMathOperator{\run}{run}
\definecolor{modra3}{rgb}{.1,.0,.4}
\definecolor{modra}{rgb}{0,0,.8}
\def\inst#1{$^{#1}$}
\begin{document}

\title{Hardness of Permutation Pattern Matching\thanks{Supported by project 
16-01602Y of the Czech Science Foundation.}}

\author{V\'{\i}t Jel\'{\i}nek\inst{1}
\and
Jan Kyn\v{c}l\inst{2}}

\date{}

\maketitle

\begin{center} 
{\footnotesize
\inst{1} Computer Science Institute\\ 
Charles University, Faculty of Mathematics and Physics,\\
Malostransk\'e n\'am.~25, 118 00~ Praha 1, Czech Republic;\\
\texttt{jelinek@iuuk.mff.cuni.cz}
\\\ \\
\inst{2}
Department of Applied Mathematics and Institute for Theoretical Computer Science, \\
Charles University, Faculty of Mathematics and Physics, \\
Malostransk\'e n\'am.~25, 118 00~ Praha 1, Czech Republic; \\
\texttt{kyncl@kam.mff.cuni.cz}
}
\end{center}  

\begin{abstract}
Permutation Pattern Matching (or PPM) is a decision problem whose input is a 
pair of permutations $\pi$ and $\tau$, represented as sequences of integers, 
and the task is to determine whether $\tau$ contains a subsequence 
order-isomorphic to~$\pi$. Bose, Buss and Lubiw proved that PPM is NP-complete 
on general inputs.

We show that PPM is NP-complete even when $\pi$ has no decreasing subsequence of
length~3 and $\tau$ has no decreasing subsequence of length~4. This provides the first 
known example of PPM being hard when one or both of $\pi$ and $\sigma$ are 
restricted to a proper hereditary class of permutations.

This hardness result is tight in the sense that PPM is known to be polynomial when 
both $\pi$ and $\tau$ avoid a decreasing subsequence of length~3, as well as 
when $\pi$ avoids a decreasing subsequence of length~2. The result is also 
tight in another sense: we will show that for any hereditary proper subclass $\cC$ of the class of permutations 
avoiding a decreasing sequence of length~3, there is a polynomial algorithm 
solving PPM instances where $\pi$ is from $\cC$ and $\tau$ is arbitrary.

We also obtain analogous hardness and tractability results for the class of 
so-called skew-merged patterns. 

From these results, we deduce a complexity dichotomy for the PPM problem 
restricted to $\pi$ belonging to $\Av(\rho)$, where $\Av(\rho)$ denotes the 
class of permutations avoiding a permutation~$\rho$. Specifically, we show that 
the problem is polynomial when $\rho$ is in the set $\{1, 12, 21, 132, 213, 
231, 312\}$, and it is NP-complete for any other~$\rho$.
\end{abstract}

\section{Introduction}

A \emph{permutation} of size $n$ is a bijection from the set $\{1,2,\dotsc,n\}$ 
to itself. We represent a permutation $\pi$ of size $n$ by the sequence 
$\pi=\pi(1),\pi(2),\dotsc,\pi(n)$. We let $S_n$ denote the set of permutations 
of size~$n$. When writing out short permutations explicitly, we usually omit 
the punctuation and write, e.g., $312$ instead of $3, 1, 2$. We let $[n]$ 
denote the set $\{1,2,\dotsc,n\}$.

We say that a permutation $\tau=\tau(1),\tau(2),\dotsc,\tau(n)\in S_n$ 
\emph{contains} 
a permutation $\pi=\pi(1),\pi(2),\dotsc,\pi(k)\in S_k$, which we denote by 
$\pi\leq 
\tau$, if $\tau$ has a subsequence $\tau(i_1),\allowbreak 
\tau(i_2),\dotsc,\allowbreak\tau(i_k)$ 
whose elements have the same relative order as the elements of $\pi$, that is, 
for any $a,b\in[k]$ we have $\tau(i_a)<\tau(i_b)$ if and only if 
$\pi(a)<\pi(b)$. We call such a subsequence $\tau(i_1), 
\tau(i_2),\dotsc,\tau(i_k)$ an \emph{occurrence} of $\pi$ in~$\tau$.
If $\tau$ does not contain $\pi$, we say that $\tau$ 
\emph{avoids}~$\pi$ or that $\tau$ is \emph{$\pi$-avoiding}.

In this paper, we study the computational complexity of determining for a given 
pair of permutations $\pi$  and $\tau$ whether $\tau$ contains~$\pi$. In the 
literature, this problem is known as Permutation Pattern Matching, or PPM.

\prob{Permutation Pattern Matching (PPM)}{Permutations $\pi\in S_k$ 
and $\tau\in S_n$.}{Does $\tau$ contain $\pi$?}

In the context of PPM, the permutation $\pi$ is usually called the 
\emph{pattern}, and $\tau$ is called the \emph{text}. When dealing with 
instances of PPM, we always assume that the pattern is at most as long as the 
text, that is, $k\le n$.
 
Observe that PPM can be solved by a simple brute-force algorithm in time
$O(\binom{n}{k}\cdot k)$. Thus, if the pattern $\pi$ were fixed, rather than 
being part of the input, the problem would trivially be polynomial-time 
solvable.

Bose, Buss and Lubiw~\cite{BBL} have shown that PPM is NP-complete. This 
general hardness result has motivated the study of parameterized and restricted 
variants of PPM. 

Guillemot and Marx~\cite{GM} have shown that PPM can be solved in time 
$2^{O(k^2\log k)}\cdot n$; in particular, PPM is fixed-parameter tractable with 
$k$ considered as parameter. The complexity of the Guillemot--Marx algorithm 
was 
later reduced to $2^{O(k^2)}\cdot n$ by Fox~\cite{Fox}.

A different parameterization was considered by Bruner and 
Lackner~\cite{BL_runs}, who proved that PPM can be solved in time 
$O(1.79^{\run(\tau)}\cdot kn)$, where $\text{run}(\tau)$ is the number of 
increasing and decreasing runs in~$\tau$; here an \emph{increasing run} in 
$\tau(1),\tau(2),\dotsc,\tau(n)$ is a maximal consecutive increasing subsequence 
of 
length at least $2$, and decreasing runs are defined analogously. The 
Bruner--Lackner algorithm shows that PPM is fixed-parameter tractable with 
respect to the parameter~$\text{run}(\tau)$. Moreover, since $\run(\tau)\le n$ 
for any permutation $\tau\in S_n$, the Bruner--Lackner algorithm solves PPM in 
time $1.79^n\cdot n^{O(1)}$. It is the first algorithm to improve upon the 
bound 
of $2^n\cdot n^{\Theta(1)}$ achieved by the straightforward brute-force 
approach.

Another algorithm for PPM was described by Albert, Aldred, Atkinson and 
Holton~\cite{AAAH}, and later a similar approach was analyzed by Ahal 
and Rabinovich~\cite{AR08_subpattern}. Ahal and Rabinovich have proved that PPM 
can be solved in time $n^{O(\mathrm{tw}(G_{\pi}))}$, where $G_\pi$ is a graph 
that can be associated to the pattern $\pi$, and $\mathrm{tw}(G_\pi)$ denotes 
the treewidth of~$G_\pi$. We shall give the precise definitions and 
the necessary details of this approach in Section~\ref{sec-poly}.

Apart from the above-mentioned algorithms, which all solve general PPM 
instances, various researchers have obtained efficient solutions for instances 
of PPM where $\pi$ or $\tau$ satisfy some additional restrictions. The most 
natural way to formalize such restrictions is to use the concept of permutation 
class, which we now introduce.

A \emph{permutation class} is a set $\cC$ of permutations with the property 
that for every $\sigma \in \cC$, all the permutations contained in $\sigma$ 
belong to $\cC$ as well.

It is often convenient to describe a permutation class $\cC$ by specifying the 
minimal permutations not belonging to~$\cC$. For a set of permutations $F$, we 
let $\Av(F)$ denote the class of permutations that avoid all the permutations 
in~$F$. Note that for any permutation class $\cC$ there is a unique (possibly 
infinite) antichain of permutations $F$ such that $\cC=\Av(F)$. The set $F$ is 
the \emph{basis} of~$\cC$. 

A \emph{principal permutation class} is a permutation class whose basis has a 
single element. A \emph{proper permutation class} is a permutation class whose 
basis is nonempty, or in other words, a permutation class that does not contain 
all permutations. For a recent overview of the structural theory of permutation 
classes, we refer the interested reader to the survey by Vatter~\cite{Vatter}.

When dealing with specific sets $F$, we often omit nested braces and write, 
e.g., $\Av(321)$ or $\Av(2413, 3142)$, instead of $\Av(\{321\})$ or 
$\Av(\{2413,3142\})$, respectively.

In this paper, we focus on the complexity of PPM when one or both of the inputs 
are restricted to a particular proper permutation class. Following the 
terminology of Albert et al.~\cite{ALLV_321}, we consider, for a permutation 
class~$\cC$, these two restricted versions of PPM:

\prob{$\cC$-Permutation Pattern Matching ($\cC$-PPM)}{A pattern $\pi\in \cC$ of 
size $k$ and a text $\tau\in \cC$ of size~$n$.}{Does $\tau$ contain $\pi$?}

\prob{$\cC$-Pattern Permutation Pattern Matching ($\cC$-Pattern PPM)}{A pattern 
$\pi\in \cC$ of size $k$ and a text $\tau\in S_n$.}{Does $\tau$ contain $\pi$?}

Clearly, any instance of $\cC$-PPM is also an instance of $\cC$-Pattern PPM, 
and in particular, $\cC$-PPM is at most as hard as $\cC$-Pattern PPM. 

Bose, Buss and Lubiw~\cite{BBL} have shown that $\cC$-Pattern PPM is 
polynomially tractable when $\cC$ is the class $\Av(2413,3142)$ of the 
so-called \emph{separable permutations}. Other algorithms for 
$\Av(2413,3142)$-Pattern PPM were given by Ibarra~\cite{Ibarra}, by Albert 
et al.~\cite{AAAH}, and by Yugandhar and Saxena~\cite{YS}.

An even more restricted case of $\cC$-Pattern PPM deals with monotone 
increasing patterns, that is, $\cC=\Av(21)$. In this case, $\cC$-Pattern PPM 
reduces to finding the longest increasing subsequence in a given text. This is 
an old algorithmic problem~\cite{Schensted}, and can be solved in time 
$O(n\log\log n)$~\cite{ChW,Makinen}.

A natural generalization is to consider instances of PPM where patterns and 
texts can be partitioned into a bounded number of monotone sequences. For 
integers $r,s\ge 0$, we say that a permutation $\pi$ is an 
\emph{$(r,s)$-permutation} if $\pi$ can be partitioned into $r$ increasing and 
$s$ decreasing (possibly empty) subsequences. We let $\C{r}{s}$ denote the 
class 
of all $(r,s)$-permutations. In particular, $\Av(21)=\C{1}{0}$, and more 
generally, $\Av(k(k-1)\dots 1)=\C{k-1}{0}$. The $(1,1)$-permutations are also 
known as \emph{skew-merged permutations}, and it is not hard to see that 
$\C{1}{1}=\Av(2143,3412)$; see Atkinson~\cite{Atkinson}. K{\' e}zdy, Snevily 
and Wang~\cite{KSW} have shown that for any $r,s\ge0$, the basis of the class 
$\C{r}{s}$ is finite; however, they also pointed out that the basis of 
$\C{2}{1}$ has more than 100 permutations.

Guillemot and Vialette~\cite{GV09_321} have shown that $\C{2}{0}$-PPM is
polynomial-time solvable. A different, faster algorithm for $\C{2}{0}$-PPM has
been described by Albert et al.~\cite{ALLV_321}. By a similar approach, 
Albert et al.~\cite{ALLV_321} have also obtained a polynomial algorithm for 
$\C{1}{1}$-PPM.

It was an open problem to determine whether there is any proper 
permutation class $\cC$ for which $\cC$-Pattern PPM (or even 
$\cC$-PPM) is NP-complete~\cite{Dagstuhl,ALLV_321}. Our first main result 
solves this problem.

\begin{theorem}\label{thm-main}
 It is NP-complete to decide, for a pattern $\pi\in\C{2}{0}$ and a text 
$\tau\in\C{3}{0}$, whether $\pi$ is contained in~$\tau$.
\end{theorem}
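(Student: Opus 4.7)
The plan is to reduce in polynomial time from an NP-complete problem, most naturally $3$-SAT or a restricted variant such as $3$-Occur-$3$-SAT (each variable occurs in at most three clauses). Membership in NP is immediate, since a subsequence of $\tau$ realizing $\pi$ is a polynomial-size certificate; only NP-hardness needs work.

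Given a $3$-CNF formula $\phi$ with variables $x_1,\dots,x_n$ and clauses $C_1,\dots,C_m$, I would construct a pattern $\pi(\phi)\in\C{2}{0}$ and a text $\tau(\phi)\in\C{3}{0}$. The natural decomposition of $\pi$ is into two increasing ``tracks'' $P_1,P_2$, and $\tau$ similarly into three increasing tracks $T_1,T_2,T_3$. The construction would use three kinds of local building blocks. A \emph{variable gadget} for each $x_i$ is placed inside $\tau$ and offers two alternative ``slots'' into which the matching pattern fragment can be embedded, one representing $x_i=\text{true}$ and the other $x_i=\text{false}$. A \emph{clause gadget} for each $C_j$ is placed inside $\tau$ and admits an embedding of its pattern fragment if and only if at least one of the three incoming literals has been set to true. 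Long monotone \emph{wires}, laid along the two tracks of $\pi$ and the three tracks of $\tau$, connect each variable gadget to the clauses in which the variable occurs and force the local choice at a variable gadget to propagate consistently.

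The main technical obstacle will be verifying that the whole construction stays inside $\C{2}{0}$ and $\C{3}{0}$. Because the text has only three increasing tracks, the unrestricted crossings used in the Bose--Buss--Lubiw reduction are not available, and each pattern element must have an essentially prescribed host track in any valid embedding. I would maintain explicit track partitions for $\pi$ and $\tau$ throughout the inductive construction and, after each insertion, perform a bounded local check against the last elements on each track to guarantee that no $321$ (in $\pi$) or $4321$ (in $\tau$) is created.

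Correctness has two directions. In the easy direction, from a satisfying assignment I would exhibit an embedding $\pi\hookrightarrow\tau$ by selecting the ``true'' slot at each variable gadget, routing through a satisfied literal in each clause gadget, and matching the wires in the obvious monotone way. The harder direction---showing that every embedding of $\pi$ into $\tau$ yields a satisfying assignment---I expect to be the main obstacle. Here I would exploit the tightness of the track budget to argue that only a handful of ``local configurations'' are compatible with embedding a two-track pattern inside a three-track text, so that every embedding must respect the gadget structure, and then read off a satisfying assignment from which slot is used at each variable gadget. A careful case analysis of how two increasing runs of $\pi$ can be simultaneously aligned with three increasing runs of $\tau$ will be the technical heart of this step.
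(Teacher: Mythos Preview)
Your high-level outline (reduce from $3$-SAT, build variable and clause gadgets, verify the class constraints, argue soundness) matches the paper's, but the proposal stops precisely where the real difficulty begins: you never describe any concrete gadget, and the mechanisms you gesture at (``wires'' between variables and clauses, ``slots'' for true/false, a ``careful case analysis'' of track alignments) are not enough to see that a reduction exists. The hard part of this theorem is not checking that a given construction avoids $321$/$4321$; it is designing gadgets that are simultaneously (i) constrained enough that \emph{every} embedding of $\pi$ into $\tau$ can be decoded into a satisfying assignment, and (ii) loose enough to live inside $\C{2}{0}$ and $\C{3}{0}$. Your plan for soundness---exploit the ``tightness of the track budget'' so that embeddings respect the gadget structure---is where the argument would collapse: a priori a two-track pattern has enormous freedom inside a three-track text, and nothing in your sketch forces an embedding to hit your gadgets at all, let alone in the intended positions.

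The paper supplies exactly the missing ideas. First, a long \emph{anchor}: both $\pi$ and $\tau$ contain an increasing block whose length exceeds the rest of $\tau$, which forces the anchor of $\pi$ into the anchor of $\tau$ and hence pins the ``base'' of the pattern rigidly. Second, a \emph{staircase} structure: $\pi$ is a $v$-fold staircase of $2c+1$ steps (alternating inner/outer bends), and $\tau$ is a $2v$-fold staircase with the bases of $\xt_i$ and $\xf_i$ interleaved as $3412$, so the base of each $X_i$ is forced to choose $\xt_i$ or $\xf_i$, encoding a truth value. Crucially, once a bend is fixed, the next bend is essentially determined by the sandwiching relations, so the choice at the base propagates through the entire staircase---this replaces your unspecified ``wires''. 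Third, clauses are implemented by \emph{fork/merge gadgets}: at three consecutive bends associated to clause $K_t$, bypasses are inserted so that the relevant outer bend splits into three ``levels'', arranged so that the three $\xm$-bends never form an increasing triple while every other combination does. Without something playing the role of the anchor and the bend-by-bend propagation, there is no reason an arbitrary embedding would encode a consistent assignment, and your proposal does not provide a substitute.
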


Consequently, $\C{2}{0}$-Pattern PPM as well as 
$\C{3}{0}$-PPM are NP-complete. These results are tight in the sense that both 
$\C{1}{0}$-Pattern PPM and $\C{2}{0}$-PPM are polynomial-time solvable, as 
mentioned above. 

We obtain similar results when $\C{2}{0}$ and $\C{3}{0}$ are replaced 
with $\C{1}{1}$ and $\C{2}{1}$, respectively. In fact, here we can 
be even more restrictive. Let $\Cplus$ be the class $\C{1}{1}\cap\Av(3142)$.

\begin{theorem}\label{thm-skew}
 It is NP-complete to decide, for a pattern $\pi\in\Cplus$ and a text 
$\tau\in\C{2}{1}$, whether $\pi$ is contained in~$\tau$.
\end{theorem}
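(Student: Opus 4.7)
The plan is to recycle the reduction used for Theorem~\ref{thm-main} after reshaping the gadgets so that the pattern lies in $\Cplus$ instead of $\C{2}{0}$ and the text lies in $\C{2}{1}$ instead of $\C{3}{0}$. Since every $\C{2}{0}$-permutation decomposes as the union of two increasing chains, and every $\C{3}{0}$-permutation as the union of three, the most natural passage from the setting of Theorem~\ref{thm-main} to that of Theorem~\ref{thm-skew} is to reverse one of the increasing chains in each of $\pi$ and $\tau$, turning it into a decreasing chain. The resulting $\pi$ then lies in $\C{1}{1}$ and the resulting $\tau$ in $\C{2}{1}$; forcing $\pi$ to additionally avoid $3142$ is the extra design constraint that drives most of the construction.

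Concretely, I would first identify which of the two increasing chains of the Theorem~\ref{thm-main} pattern is the right one to reverse, and similarly for the text. Each gadget of the new reduction would then consist of a small increasing piece together with a small decreasing piece, placed so that a global $3142$ cannot arise -- for instance by stacking all decreasing pieces either entirely below-and-right, or entirely above-and-left, of each gadget's increasing piece. The correctness of the reduction then hinges on a \emph{type-preservation lemma}: in any occurrence of $\pi$ in $\tau$, the increasing part of $\pi$ must be mapped into the union of the two increasing chains of $\tau$, and the decreasing part of $\pi$ into the decreasing chain of $\tau$. A length argument of the sort used in Theorem~\ref{thm-main}, combined with the $3142$-avoidance of $\pi$, should force this partition to be essentially unique.

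Once type preservation is established, the correspondence between occurrences of $\pi$ in $\tau$ and solutions of the source problem from Theorem~\ref{thm-main} carries over verbatim, modulo the notation of the new gadgets. I expect the main obstacle to be engineering the gadgets so that the constructed $\pi$ genuinely avoids $3142$: this is not inherited automatically from a $\C{2}{0}$-construction, and it may require splitting each original gadget into several $\Cplus$-subgadgets arranged monotonically with respect to each other. A secondary difficulty is ruling out spurious occurrences of the new $\pi$ in the new $\tau$ that only become available because $\tau$ now contains a long decreasing run; here too, $3142$-avoidance combined with the careful monotonic layout of the gadgets should suffice, but verifying this in detail will likely dominate the length of the proof.
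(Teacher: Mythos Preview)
Your high-level plan---transform the Theorem~\ref{thm-main} reduction so that one increasing chain becomes decreasing---matches the paper's intent, but the specific transformation you propose is not what the paper does, and the discrepancy matters.

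The paper does \emph{not} globally reverse one of the two chains of $\pi$ (or one of the three chains of $\tau$). Instead it introduces a \emph{twirl} operation: given the stair-decomposition of $\pi$ into blocks $A,Q_1,P_1,Q_2,P_2,\dotsc$, each block is individually rotated or reflected and the blocks are rearranged into a spiral rather than a staircase. The crucial point is that the paper first proves (Lemma~\ref{lem-spiral}) that a permutation lies in $\Cplus$ \emph{if and only if} it admits a spiral decomposition. Thus twirling $\pi$ automatically produces a permutation in $\Cplus$: the $3142$-avoidance you identify as ``the main obstacle'' costs nothing and requires no gadget re-engineering. Similarly, twirling the relaxed stair-decomposition of $\tau$ immediately gives a $\C{2}{1}$ permutation.

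For correctness, the paper observes that the twirl is a bijection between block-preserving occurrences, so the equivalence with satisfiability of $\Phi$ carries over verbatim from Proposition~\ref{pro-correct}. Your proposed ``type-preservation lemma'' (increasing part of $\pi$ must map into the two increasing chains of $\tau$, decreasing part into the decreasing chain) is both unnecessary and, as stated, unlikely to hold without further structure---there is no general reason a skew-merged pattern must respect a fixed $(1,1)$-partition when embedded in a $(2,1)$-text. By contrast, block-preservation under the twirl is immediate from the definition.

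In short: your approach could perhaps be pushed through, but it turns what the paper does in a paragraph into a substantial engineering and verification task. The missing idea is the spiral decomposition as an intrinsic characterization of $\Cplus$, together with the twirl as a structure-preserving map between stair- and spiral-decompositions.
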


This result again implies that $\Cplus$-Pattern PPM and $\C{2}{1}$-PPM are 
NP-complete, in contrast with the polynomiality of $\C{1}{0}$-Pattern PPM,  
$\C{1}{1}$-PPM and $\C{2}{0}$-PPM.

Combining Theorems~\ref{thm-main} and~\ref{thm-skew} with previously known 
polynomial cases of $\cC$-Pattern PPM, we obtain a complexity dichotomy of 
$\cC$-Pattern PPM for principal classes~$\cC$.

\begin{theorem}\label{thm-principal}
Let $\alpha$ be a permutation. The problem $\Av(\alpha)$-Pattern PPM 
is polynomial-time solvable for $\alpha\in\{1,12,21,132,213,231,312\}$ and 
NP-complete for any other~$\alpha$.
\end{theorem}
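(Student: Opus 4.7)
My plan is to combine Theorems~\ref{thm-main} and~\ref{thm-skew} with the polynomial algorithm of Bose, Buss and Lubiw for the separable class $\Av(2413,3142)$~\cite{BBL}, together with two standard facts: the symmetries reverse, complement and inverse all preserve pattern containment, so $\Av(\alpha)$-Pattern PPM has the same complexity as $\Av(\alpha')$-Pattern PPM whenever $\alpha'$ lies in the symmetry orbit of $\alpha$; and whenever $\beta\le\alpha$, one has $\Av(\beta)\subseteq\Av(\alpha)$, so $\Av(\beta)$-Pattern PPM reduces to $\Av(\alpha)$-Pattern PPM. The proof then becomes a case analysis on $\alpha$.

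For the polynomial side, the cases $\alpha\in\{1,12,21\}$ are immediate, since $\Av(\alpha)$ is either trivial or consists of monotone permutations and the problem reduces to finding the longest monotone subsequence of $\tau$. For $\alpha\in\{132,213,231,312\}$ I would verify by direct inspection that $\alpha\le 2413$ and $\alpha\le 3142$; hence $\Av(\alpha)\subseteq\Av(2413,3142)$, and the separable algorithm solves $\Av(\alpha)$-Pattern PPM in polynomial time.

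For the hard side, the length-$3$ cases $\alpha\in\{123,321\}$ are handled by Theorem~\ref{thm-main} (since $\Av(321)=\C{2}{0}$), combined with the complement symmetry to cover $\alpha=123$. When $|\alpha|\ge 5$, the Erd\H{o}s--Szekeres theorem forces $\alpha$ to contain $123$ or $321$, so one of $\Av(123)$, $\Av(321)$ embeds into $\Av(\alpha)$ and hardness is inherited from the previous case.

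The main obstacle is the finite set of length-$4$ patterns avoiding both $123$ and $321$; a direct enumeration shows these are exactly $\{2143,3412,3142,2413\}$. For $\alpha\in\{2143,3412,3142\}$, the class $\Cplus=\Av(2143,3412,3142)$ from Theorem~\ref{thm-skew} is contained in $\Av(\alpha)$, immediately yielding NP-completeness of $\Av(\alpha)$-Pattern PPM. The case $\alpha=2413$ is the trickiest because $2413\in\Cplus$ rules out the direct inclusion argument; however, $2413$ and $3142$ are inverses of each other, so inversion symmetry transfers hardness from $\Av(3142)$ to $\Av(2413)$ and completes the dichotomy.
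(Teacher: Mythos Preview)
Your proposal is correct and follows essentially the same approach as the paper's proof: reduce the polynomial cases to the Bose--Buss--Lubiw algorithm for separable permutations via the inclusion $\Av(\alpha)\subseteq\Av(2413,3142)$, derive hardness for any $\alpha$ containing $123$ or $321$ from Theorem~\ref{thm-main} plus symmetry, and handle the four remaining length-$4$ permutations $\{2143,3412,3142,2413\}$ via Theorem~\ref{thm-skew} plus symmetry. One small organizational point: your length-based case split (length~$3$, then $|\alpha|\ge 5$ via Erd\H{o}s--Szekeres, then length~$4$ avoiding $123$ and $321$) leaves the length-$4$ permutations that \emph{do} contain $123$ or $321$ unmentioned; the paper avoids this by treating ``$\alpha$ contains $123$ or $321$'' as a single case regardless of length, which makes the Erd\H{o}s--Szekeres step unnecessary.
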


We also obtain new tractability results, which show that the NP-hardness 
results 
for $\C{2}{0}$-Pattern PPM and $\Cplus$-Pattern PPM are tight in an even 
stronger sense than suggested above.

\begin{theorem}\label{veta_polynomialni}
If $\cC$ is a proper subclass of $\C{2}{0}$ 
then $\cC$-Pattern PPM can be solved in polynomial time.
\end{theorem}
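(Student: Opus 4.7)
The plan is to invoke the algorithm of Ahal and Rabinovich~\cite{AR08_subpattern}, which solves PPM in time $n^{O(\mathrm{tw}(G_\pi))}$, where $G_\pi$ is the graph associated with the pattern $\pi$ defined in Section~\ref{sec-poly}. Since the text $\tau$ is arbitrary, it suffices to show that there is a constant $t=t(\cC)$ such that $\mathrm{tw}(G_\pi)\le t$ for every $\pi\in\cC$.

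Because $\cC$ is a proper subclass of $\C{2}{0}=\Av(321)$, there exists a permutation $\rho$, which we may assume lies in $\C{2}{0}$, with $\cC\subseteq\Av(321,\rho)$. It will suffice to bound $\mathrm{tw}(G_\pi)$ uniformly on the larger class $\Av(321,\rho)$ by a constant depending only on $|\rho|$. I would exploit the canonical decomposition of a 321-avoiding permutation $\pi$ into the set $A$ of its left-to-right maxima and its complement $B$, both of which are increasing.

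To prove the treewidth bound I would build a path-like tree decomposition of $G_\pi$ by scanning $\pi$ from left to right, keeping in each bag only those vertices whose $G_\pi$-neighbours straddle the current scan position. The critical claim is that if more than $t(|\rho|)$ vertices were ever simultaneously ``live'' during such a sweep, then the interaction between $A$ and $B$ inside a suitable window of $\pi$ would be rich enough to force a copy of $\rho$, contradicting $\pi\in\Av(\rho)$. Given such a bound on the width of the decomposition, Theorem~\ref{veta_polynomialni} follows directly from the Ahal--Rabinovich bound, with the exponent in $n$ depending only on $\cC$.

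The main obstacle is this structural lemma: one has to pin down how the edges of $G_\pi$ reflect the interleaving of $A$ and $B$, and then argue extremally that any sufficiently complex interleaving inside a 321-avoiding permutation contains every fixed $\rho\in\Av(321)$. A natural framework for the extremal step is the staircase/grid-class encoding of $\Av(321)$: proper subclasses of $\Av(321)$ should be embeddable in grid classes of bounded dimension (as a function of $|\rho|$), and bounded grid dimension translates into a bounded-width tree decomposition of $G_\pi$. Verifying this embedding, and checking that the width-bound it yields is the quantity actually controlling $\mathrm{tw}(G_\pi)$, is where the real work lies; the rest of the argument is a straightforward invocation of Ahal and Rabinovich.
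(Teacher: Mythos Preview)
Your high-level plan matches the paper exactly: bound $\mathrm{tw}(G_\pi)$ uniformly over $\cC$ and invoke Ahal--Rabinovich. The gap is in the structural lemma you flag as the ``main obstacle'', and in fact the specific route you propose to it does not work. The left-to-right sweep gives the wrong invariant. Take $\rho=2143\in\Av(321)$ and, for each $k$, the permutation $\pi_k=2,4,6,\dotsc,2k,1,3,5,\dotsc,2k-1$. Every descent in $\pi_k$ runs from the even block into the odd block, so a copy of $2143$ would force its second descent to lie inside the increasing odd block, which is impossible; hence $\pi_k\in\Av(321,2143)$. Yet immediately after position $k$, each element $2i$ on the left has its value-neighbour $2i-1$ still to the right, so all $k$ left vertices are simultaneously live and your sweep has width at least $k$. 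Thus ``large left-to-right sweep width $\Rightarrow$ contains $\rho$'' is false, and this path decomposition does not bound $\mathrm{tw}(G_\pi)$ on $\Av(321,2143)$ (whereas in reality $\mathrm{tw}(G_{\pi_k})\le 6$, since $\pi_k$ has a stair-decomposition with three blocks).

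The paper avoids any fixed vertex order and instead proves the contrapositive through the grid minor theorem. If $\mathrm{tw}(G_\pi)$ is large, $G_\pi$ contains a large wall; using that any $k$ consecutive blocks of a stair-decomposition induce a subgraph of treewidth $O(k)$ (Lemma~\ref{lemma_tw_blocks}), the wall must spread across many blocks, yielding many vertex-disjoint paths between two blocks at distance $k$ (Lemma~\ref{lem-graf}); those paths, once restricted to good edges, force an occurrence of the $k$-track (Lemma~\ref{lem-good-paths}), which is universal for $\Av(321)$ permutations of size $k$ (Lemma~\ref{lemma_ktrack}). So the ``richness'' that actually forces $\rho$ is not the number of live vertices in a positional scan but the existence of many disjoint $G_\pi$-paths crossing a long stretch of the stair-decomposition; your grid-class intuition points loosely toward this staircase structure, but the argument genuinely needs both the universal $k$-track and the wall/grid-minor step, neither of which your sweep supplies.
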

\begin{theorem}\label{veta_polynomialni_skew}
If $\cC$ is a proper subclass of $\Cplus$
then $\cC$-Pattern PPM can be solved in polynomial time.
\end{theorem}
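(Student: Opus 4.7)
The plan is to combine the Ahal--Rabinovich algorithm, which solves any PPM instance in time $n^{O(\mathrm{tw}(G_\pi))}$, with a structural bound on the treewidth of $G_\pi$ whenever $\pi$ lies in a proper subclass $\cC$ of $\Cplus$. Concretely, I aim to show that for any such $\cC$ there is a constant $t = t(\cC)$ with $\mathrm{tw}(G_\pi)\le t$ for every $\pi\in\cC$; the polynomial-time algorithm then follows immediately. This parallels the route envisaged for Theorem~\ref{veta_polynomialni} in the $\C{2}{0}$ case.

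Since $\cC$ is proper, I can pick some $\rho\in\Cplus\setminus\cC$; then $\cC\subseteq\Cplus\cap\Av(\rho)$. Every $\pi\in\Cplus$ admits a canonical partition into an increasing subsequence $\xp$ and a decreasing subsequence $\xm$, and the avoidance of $3142$ inside $\C{1}{1}$ forces the two parts to interleave along the horizontal axis in a staircase fashion: the positions occupied by $\xp$ split the positions occupied by $\xm$ into consecutive blocks that do not cross one another, and symmetrically in the value coordinate. I would first formalize this staircase structure and then prove a structural lemma stating that, because $\pi$ additionally avoids $\rho$, the staircase has only a bounded number of ``nondegenerate'' steps, with the remaining steps being monotone blocks of bounded interaction with their neighbors. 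From such a bounded-complexity staircase decomposition of $\pi$, a tree decomposition of $G_\pi$ of width $O(1)$ can be assembled by walking along the staircase from left to right and letting each bag contain one step together with the $O(1)$ boundary elements that glue it to the adjacent steps.

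The main obstacle is the structural lemma itself: for each possible $\rho\in\Cplus$, one has to show that any sufficiently long ``genuinely mixed'' staircase appearing inside a permutation of $\Cplus$ already contains $\rho$ as a pattern. This demands a case analysis on the shape of $\rho$ relative to the staircase, distinguishing, for instance, whether $\rho$ lies within one of the two monotone parts, within a bounded number of steps of the staircase, or genuinely spreads across many steps. The role of the $3142$-avoidance is crucial here, since Theorem~\ref{thm-skew} shows that without it one can build NP-hard instances out of arbitrarily long mixed staircases; consequently the treewidth bound must exploit the fact that the $3142$-free staircase is linearly ordered and can be cut into well-defined blocks. Once the structural lemma is in hand, extracting the bounded-width tree decomposition and invoking the Ahal--Rabinovich algorithm are routine.
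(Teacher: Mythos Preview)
Your high-level plan is right: bound $\mathrm{tw}(G_\pi)$ uniformly over $\pi\in\cC$ and then invoke Ahal--Rabinovich. But the heart of your proposal, the ``structural lemma'' asserting that avoiding a fixed $\rho\in\Cplus$ forces the staircase of $\pi$ to have only boundedly many ``nondegenerate'' steps, is not proved, not precisely formulated, and in its most natural reading is not even true. A permutation in $\Cplus$ can have an arbitrarily long spiral decomposition with all blocks nontrivial while still avoiding a fixed $\rho$: long does not mean universal, because the specific alternation structure needed to embed $\rho$ need not be present. So you cannot read off a bounded-width tree decomposition directly from ``few nondegenerate steps''. Your promised case analysis on the shape of $\rho$ is thus aimed at a statement that is either false or would have to be reformulated to say something much closer to ``$\pi$ contains a specific universal pattern'', and you give no mechanism for producing that pattern from the mere fact that $\mathrm{tw}(G_\pi)$ is large.

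The paper takes a genuinely different route that avoids any case analysis on~$\rho$. It first formalises your ``staircase in $\Cplus$'' as a \emph{spiral decomposition} and introduces a bijection (the \emph{twirl}) between stair-decomposed permutations in $\Av(321)$ and spiral-decomposed permutations in $\Cplus$, preserving block-respecting occurrences and good edges of $G_\pi$. It then identifies an explicit universal pattern for $\Cplus$, the \emph{$k$-spiral} (the twirl of the $k$-track), and proves the contrapositive: if $\mathrm{tw}(G_\pi)$ is large, then by the grid-minor theorem $G_\pi$ contains a large wall; the wall yields many vertex-disjoint paths between distant blocks of the spiral decomposition; untwirling and applying the $\Av(321)$ argument (Lemma~\ref{lem-good-paths}) shows that $\pi$ contains the $k$-spiral or its reverse-complement, and hence contains $\rho$ once $k$ is large enough. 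The grid-minor theorem and the universal pattern are precisely the missing ingredients that replace your unproven structural lemma, and the twirl lets the paper reuse the $\Av(321)$ machinery wholesale rather than redo a bespoke analysis for~$\Cplus$.
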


In Section~\ref{sec-321}, we give the proof of Theorem~\ref{thm-main}. Then, in 
Section~\ref{sec-poly}, we prove Theorem~\ref{veta_polynomialni}. In the 
subsequent section, we explain how these proofs can be adapted to 
patterns from the class $\Cplus$ (and texts from~$\C{2}{1}$), and deduce 
Theorems \ref{thm-skew}, \ref{thm-principal} and~\ref{veta_polynomialni_skew}.

\section{\texorpdfstring{Hardness of $\Av(321)$-Pattern PPM}
{Hardness of Av(321)-Pattern PPM}}\label{sec-321}

Our goal is to show that for a pattern $\pi\in \C 2 0 $ and a text 
$\tau\in\C 3 0$, it is NP-complete to decide whether $\pi$ is contained in 
$\tau$. Since the problem is clearly in NP, we focus on proving its 
NP-hardness. We take inspiration from the NP-hardness proof given by Bose, Buss 
and Lubiw~\cite{BBL} for general permutations and adapt it to the proper classes 
$\C 2 0$ and $\C 3 0$. We proceed by reduction from the classical NP-complete 
problem 
3-SAT, whose input is a 3-CNF formula $\Phi$, and the goal is to determine 
whether $\Phi$ is satisfiable.

We first introduce several auxiliary notions. Let
$\alpha=\alpha(1),\alpha(2),\dots,\alpha(n)$ be a permutation of length~$n$.
For a pair of elements $\alpha(i),\alpha(j)$, we say that $\alpha(i)$ is
\emph{above} $\alpha(j)$ (and $\alpha(j)$ is \emph{below} $\alpha(i)$), if
$\alpha(i)>\alpha(j)$. Likewise, $\alpha(i)$ is \emph{left} of $\alpha(j)$ (and
$\alpha(j)$ is \emph{right} of $\alpha(i)$) if $i<j$. For disjoint sets
$X,Y\subseteq\{\alpha(1),\alpha(2),\dots,\alpha(n)\}$ we say that $X$ is 
\emph{above} $Y$ 
if every element of $X$ is above all the elements of~$Y$, and similarly for the
other directions.

If $p=(\alpha(i),\alpha(j))$ is a pair of elements of
$\alpha$ and $\alpha(k)$ is another element, we say that 
\emph{$\alpha(k)$ is sandwiched by $p$ from below} if $i<k<j$ and 
$\alpha(k)$ is above $p$; see Figure~\ref{fig-sandwich}, left. Similarly, we say 
that
\emph{$\alpha(k)$ 
is sandwiched by $p$ from the left} if $\alpha(i)<\alpha(k)<\alpha(j)$ and
$\alpha(k)$ is to the right of~$p$; see Figure~\ref{fig-sandwich}, right. 
Analogously, we also define
sandwiching from the right or from above. More generally, a set $A$ of elements
of $\alpha$ is \emph{sandwiched from below by $p$} if each element of $A$ is 
sandwiched from below by~$p$, and similarly for the other directions.

A pair of elements $(\alpha(i),\alpha(j))$ is an \emph{increase} in $\alpha$ if
$i<j$ and $\alpha(i)<\alpha(j)$; in other words, an increase is an occurrence of
the permutation 12 in~$\alpha$.

\begin{figure}
\begin{center}
\includegraphics{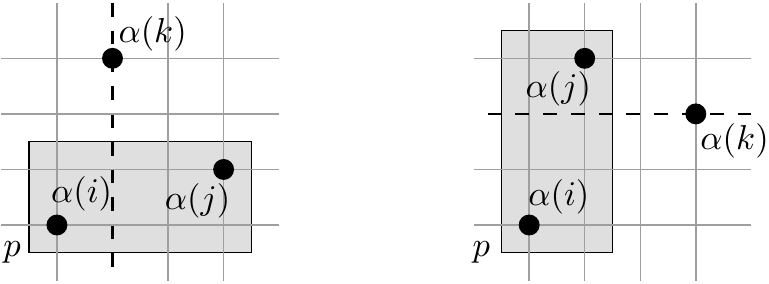}
\caption{Left: an element $\alpha(k)$ sandwiched from below by a pair of 
elements $p=(\alpha(i),\alpha(j))$. Right: $\alpha(k)$ sandwiched from the left 
by $p$.}
\label{fig-sandwich}
\end{center}
\end{figure}

A \emph{staircase of $k$ steps} is a sequence $S$ of $2k$ disjoint increases 
\[
S=(q_1,p_1,q_2,p_2,\dotsc,p_{k-1},q_k,p_k),
\] 
with the following properties (see Figure~\ref{fig-stair}): 
\begin{itemize}
\item For every $i\in [k]$, $p_i$ is sandwiched by $q_i$ 
from the left, and for $i>1$, $q_i$ is sandwiched by $p_{i-1}$ from below.
\item For every $i\in [k-1]$, $q_{i+1}$ is to the 
right and above $q_{i}$, and $p_{i+1}$ is to the right and above $p_i$. In 
particular, the elements of $q_1,q_2,\dotsc,q_k$, as well as the elements 
$p_1,p_2,\dotsc,p_k$ form an increasing sequence of length~$2k$. 
\end{itemize}
We call $q_i$ the \emph{$i$th outer bend} of $S$, and $p_i$ the \emph{$i$th 
inner bend}. Additionally, we call $q_1$ the \emph{base} of $S$ and $p_k$ the 
\emph{top} of~$S$.

\begin{figure}
\begin{center}
\includegraphics{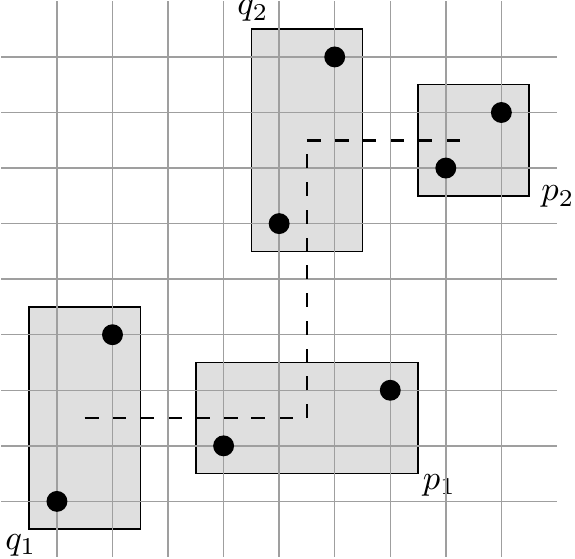}
\caption{A staircase of two steps. The shaded increases are 
the bends.}
\label{fig-stair}
\end{center}
\end{figure}

A \emph{double staircase} of $k$ steps is a pair $(S,S')$ of disjoint 
staircases of $k$ steps, with
$S=(q_1,\allowbreak p_1,\allowbreak q_2,\allowbreak p_2,\dotsc, 
p_{k-1},\allowbreak q_k,\allowbreak p_k)$ and $S'=(q'_1,\allowbreak 
p'_1,\allowbreak q'_2,\allowbreak p'_2,\dotsc, 
p'_{k-1},\allowbreak q'_k,\allowbreak p'_k)$, satisfying the following 
properties (see Figure~\ref{fig-stairs}):
\begin{itemize}
\item For each $i\ge 1$, the increase $p'_i$ is above and to the right of 
$p_i$, 
and also below (and necessarily to the right of) $q_{i+1}$. For $i<k$, $p'_i$ 
is 
also below and to the left of $p_{i+1}$. 
 \item Similarly, for $i\ge 1$, $q'_i$ is above and to the right of $q_i$, and 
also to the left (and necessarily above) $p_i$. For $i<k$, $q'_i$ is below and 
to the left of $q_{i+1}$.
\end{itemize}

An \emph{$m$-fold staircase} of $k$ steps is an $m$-tuple 
$S=(S_1,S_2,\dotsc,S_m)$ 
of staircases such that for each $i,j\in[m]$ with $i<j$, the pair $(S_i,S_j)$ 
is 
a double staircase of $k$ steps. The \emph{$i$th inner bend of $S$} is the 
union of the $i$th inner bends of the staircases $S_1,S_2,\dotsc,S_m$; the 
outer 
bends, the base and the top of $S$ are defined analogously.

\begin{figure}
\begin{center}
\includegraphics{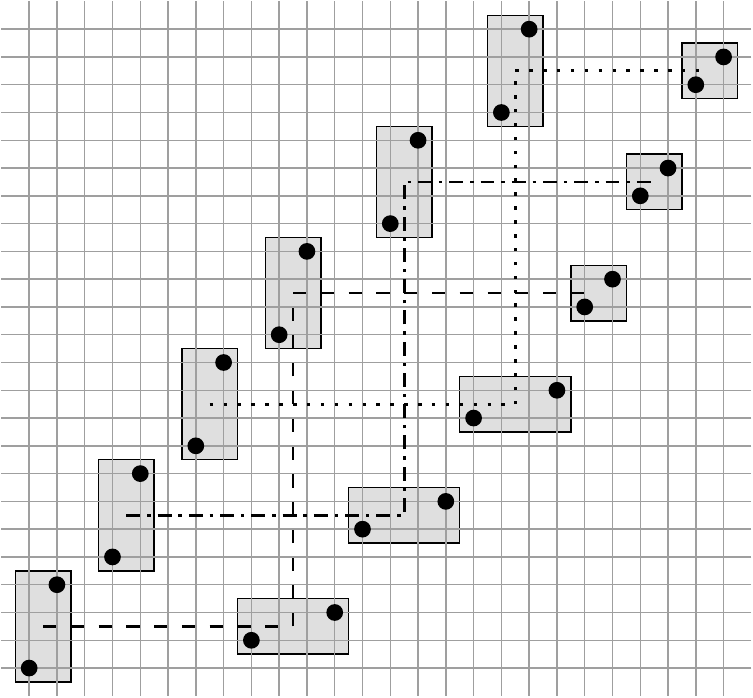}
\caption{An example of a $3$-fold staircase of two steps.}
\label{fig-stairs}
\end{center}
\end{figure}

Notice that an $m$-fold staircase avoids the pattern $321$. Moreover, the union 
of its outer bends, as well as the union of its inner bends each form an 
increasing subsequence.

\subsection{The Reduction}

We now describe the reduction from 3-SAT. Let $\Phi$ be a given 
3-CNF formula. Suppose that $\Phi$ has $v$ variables $x_1, x_2,\dotsc,x_v$ and 
$c$ clauses $K_1, K_2,\dotsc,K_c$. We will assume, without loss of generality, 
that each clause contains exactly three literals, and no variable appears 
in a single clause more than once. We will construct two permutations 
$\pi = \pi(\Phi)\in \C 2 0$ and 
$\tau = \tau(\Phi)\in\C 3 0$, such that $\Phi$ is 
satisfiable if and only if $\tau$ contains~$\pi$. 

The overall structure of $\pi$ and $\tau$ is depicted in 
Figure~\ref{fig-overview}.

\begin{figure}
\begin{center}
\includegraphics{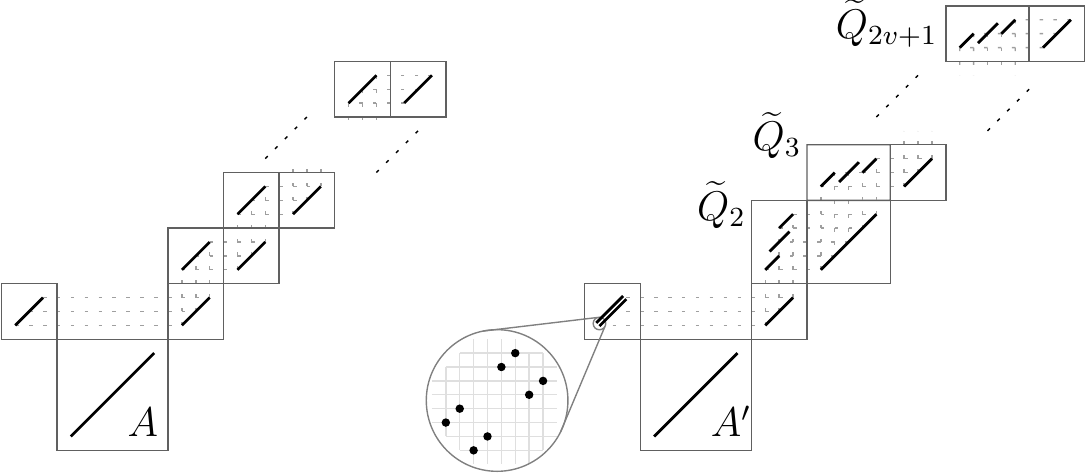}
\caption{An overview of the general structure of $\pi$ (left)
and~$\tau$ (right).}
\label{fig-overview}
\end{center}
\end{figure}

The pattern $\pi$ is the disjoint union of a $v$-fold staircase 
$X=(X_1,\allowbreak X_2,\dotsc,\allowbreak X_v)$ of $2c+1$ steps with an 
increasing sequence $A$ (called 
the \emph{anchor} of $\pi$) of length $M$, where $M$ is a sufficiently large 
value to be specified later. 
Moreover, the sequence $A$ is below $X$, to the right of the base of $X$, and 
to the left of the first inner bend of~$X$. This determines $\pi$ uniquely up 
to the value of $M$, and we may observe that $\pi$ avoids~$321$. We will say 
that the staircase $X_i$ \emph{represents} the variable $x_i$ of~$\Phi$.

We now describe the text $\tau$. As a starting point, we first build a 
permutation $\btau$ and then explain how to modify $\btau$ to obtain~$\tau$.

The permutation $\btau$ is the disjoint union of a $2v$-fold staircase 
$Y=(\xt_1,\allowbreak\xf_1,\allowbreak\xt_2,\xf_2,\dotsc,\allowbreak\xt_v,
\xf_v)$ of $2c+1$ steps, and an increasing sequence $A'$ (the \emph{anchor} of 
$\btau$) of length~$M$. The sequence $A'$ is below the $2v$-fold 
staircase, to the right of the base of $Y$, and to the left of the first outer 
bend of~$Y$. 

Each of the $2v$ staircases in $Y$ will represent one of the $2v$ possible 
literals, with $\xt_i$ representing the literal $x_i$ and $\xf_i$ representing 
$\lnot x_i$.

We now modify $\btau$ to obtain the actual text~$\tau$. The modification 
proceeds in two steps. 

In the first step, we change the relative position of the bases of the 
individual staircases in~$Y$. For every $i=1,2,\dotsc,v$, let $b_i^\text{T}$ 
and 
$b_i^\text{F}$ be the respective bases of $\xt_i$ and $\xf_i$. In $\btau$, the 
two bases together form an occurrence of $1234$. We modify their relative 
horizontal position by moving $b_i^\text{T}$ to the right of $b_i^\text{F}$, so 
that in $\tau$ the two bases will form an occurrence of $3412$. The relative 
position of $b_i^\text{T}$ and $b_i^\text{F}$ to the remaining elements remains 
unchanged; in other words, the four elements of $b_i^\text{T}\cup b_i^\text{F}$ 
occupy the same four rows and the same four columns as before.

Before we describe the second step of the construction of $\tau$, we introduce 
the following notion: suppose $S=(q_1,p_1,q_2,p_2,\dotsc,q_k,p_k)$ is a 
staircase of $k$ steps, and let $q_i$, $p_i$ and $q_{i+1}$ be three 
consecutive bends of $S$, with $i>1$. A \emph{bypass} of $q_i,p_i,q_{i+1}$ in 
$S$ is a sequence of three increases $q'_i,p'_i,q'_{i+1}$ disjoint from $S$ and 
satisfying these properties:
\begin{itemize}
\item The sequence $S'$ obtained from $S$ by replacing $q_i$ with $q'_i$, 
$p_i$ with $p'_i$ and $q_{i+1}$ with $q'_{i+1}$ is again a staircase with $k$ 
steps, and
\item $q_i$ is above and to 
the left of $q'_i$, $p_i$ is above and to the right of $p'_i$, and $q'_{i+1}$ 
is sandwiched by $q_{i+1}$ from the right (see Figure~\ref{fig-bypass}).
\end{itemize}
Note that these conditions determine the relative positions of the elements of 
$S\cup S'$ uniquely. We call the pair $(q_i,q'_i)$ the \emph{fork} of the 
bypass, and the pair $(q_{i+1},q'_{i+1})$ the \emph{merge} of the bypass.

\begin{figure}
\begin{center}
\includegraphics{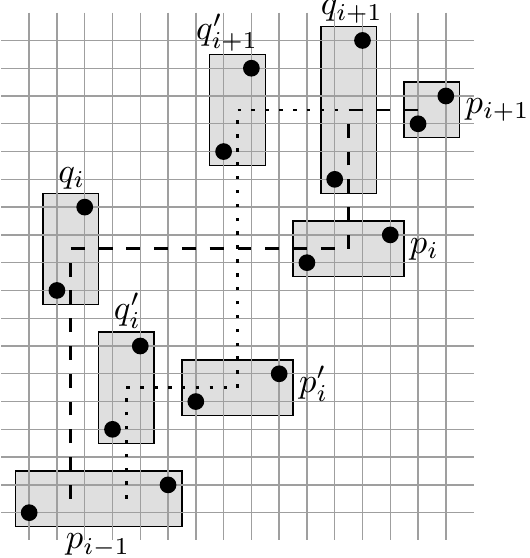}
\caption{A bypass.}
\label{fig-bypass}
\end{center}
\end{figure}

\begin{lemma}\label{lem-bypass}
Let $S=(q_1,p_1,q_2,p_2,\dotsc,q_k,p_k)$ be a staircase of $k$ steps, let 
$i\in\{2,3,\dotsc,k-1\}$ be arbitrary, and let $S'$ be obtained from $S$ by 
replacing the three bends $q_i,p_i,q_{i+1}$ with a bypass $q'_i,p'_i,q'_{i+1}$. 
Let $S''=(q''_1,\allowbreak p''_1,\allowbreak q''_2,\allowbreak 
p''_2,\dotsc,\allowbreak q''_k,\allowbreak p''_k)$ be a staircase of $k$ steps 
such 
that $p''_{i-1}=p_{i-1}$, and moreover, each of the $4k$ elements of $S''$ also 
belongs to $S\cup S'$. Then the sequence of bends 
$(q''_i,p''_i,q''_{i+1},p''_{i+1})$ is equal either to 
$(q_i,p_i,q_{i+1},p_{i+1})$ or to $(q'_i,p'_i,q'_{i+1},p'_{i+1})$.
\end{lemma}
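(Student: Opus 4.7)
The plan is to trace through the bends of $S''$ one at a time, starting from the common bend $p''_{i-1} = p_{i-1}$, and show that the staircase conditions force each subsequent bend to match either $S$ or $S'$, with no mixed combinations possible. Since the bypass only alters the bends $q_i, p_i, q_{i+1}$ (replacing them with $q'_i, p'_i, q'_{i+1}$), the twelve elements making up these six bends form the only region in which a nontrivial choice arises; every element of $S \cup S'$ lying in any other bend is simply an element of the original $S$. I would begin by making the configuration of these twelve elements fully explicit, using the bypass definition together with the uniqueness claim at the end of it.

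With the configuration in hand, the outer bend $q''_i$, being an increase sandwiched from below by $p_{i-1}$ whose two elements lie in $S \cup S'$, must come from $q_i \cup q'_i$: every other element of $S \cup S'$ either has horizontal position outside the range spanned by $p_{i-1}$ or vertical position at most that of $p_{i-1}$. Since $q_i$ lies entirely above and to the left of $q'_i$, every pair consisting of one element from each of $q_i$ and $q'_i$ forms a decrease rather than an increase, so $q''_i$ must be equal to $q_i$ or to $q'_i$.

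The chosen alternative then propagates forward. If $q''_i = q_i$, the bend $p''_i$ must be sandwiched from the left by $q_i$, and since $p'_i$ lies vertically below the range of $q_i$, the only valid choice is $p''_i = p_i$. Conversely, if $q''_i = q'_i$, then $p_i$ lies vertically above the range of $q'_i$ and the only valid choice is $p''_i = p'_i$. The same principle then yields $q''_{i+1} = q_{i+1}$ in the first case and $q''_{i+1} = q'_{i+1}$ in the second, and in both cases $p''_{i+1}$ must equal $p_{i+1}$, since $p_{i+1}$ is the only increase in $S \cup S'$ compatible (via sandwiching from the left) with either of the possible values for $q''_{i+1}$.

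The main obstacle is the geometric bookkeeping in this propagation, since at each step one has to rule out not only mixed pairs straddling the two tracks but also exotic pairs involving elements drawn from bends other than the immediate neighbour being decided. This is most delicate at $q''_i$, where one must also confirm that no pair involving, say, one element of $p_i$ together with one of $q_i$ satisfies the sandwiching condition with $p_{i-1}$; this is ruled out using the fact that $p_i$ extends horizontally past the right element of $p_{i-1}$. Once the explicit configuration of the twelve elements is laid out as in Figure~\ref{fig-bypass}, these verifications reduce to short direct inspections.
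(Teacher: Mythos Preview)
Your proposal is correct and follows exactly the same approach as the paper's proof: start from $p''_{i-1}=p_{i-1}$, use the sandwiching-from-below condition to force $q''_i\in\{q_i,q'_i\}$ (because $q_i$ is above and to the left of $q'_i$, so these are the only two increases among the four elements), and then propagate forward through $p''_i$, $q''_{i+1}$, $p''_{i+1}$ using the successive sandwiching conditions. The paper is simply terser about the geometric bookkeeping you flag as the main obstacle.
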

\begin{proof}
By assumption, $p_{i-1}=p''_{i-1}$. There are four elements of $S\cup S'$ 
(namely $q_i\cup q'_i$) sandwiched from below by $p''_{i-1}$, and two of them 
must form $q''_i$. However, since $q_i$ and $q'_i$ are the only two increases 
in 
$q_i\cup q'_i$, we have either $q''_i=q_i$ or $q''_1=q'_1$. In either case, 
there is a unique increase sandwiched by $q''_i$ from the left, and this 
increase must therefore be $p''_i$, and by the same argument, both $q''_{i+1}$ 
and $p''_{i+1}$ are determined uniquely. The lemma follows.
\end{proof}

We continue by the second step of the construction of the text~$\tau$. The 
general approach is to modify certain parts of the staircases by adding 
bypasses whose structure depends on the clauses of the formula~$\Phi$. Recall 
that $\Phi$ has $c$ clauses $K_1, K_2,\dotsc,K_c$, and that the $2v$-fold 
staircase $Y$ has $2c+1$ steps.

Let $Q_m$ and $P_m$ denote the $m$th outer bend and the $m$th inner bend 
of~$Y$, 
respectively. For every $t\in[c]$, we will associate to the clause $K_t$ the 
sequence of three bends $Q_{2t}, P_{2t}, Q_{2t+1}$ of~$Y$. 

Let $K_t$ be a clause of $\Phi$ of the form $(L_i\lor L_j\lor L_k)$, with  
$L_i\in\{x_i,\lnot x_i\}$, $L_j\in\{x_j,\lnot x_j\}$, and $L_k\in\{x_k,\lnot 
x_k\}$, for some $i<j<k$. 

Let $\xp_i\in\{\xt_i,\xf_i\}$ denote the staircase representing the literal
$L_i$, and let $\xm_i\in\{\xt_i,\xf_i\}\setminus\{\xp_i\}$ be the staircase
representing the other literal containing~$x_i$. We define $\xp_j$, $\xm_j$,
$\xp_k$ and $\xm_k$ analogously. 

We will now add bypasses to (some of) the staircases $\xt_1,\allowbreak 
\xf_1,\allowbreak\xt_2,\allowbreak \xf_2, \dots,\allowbreak \xt_v,\allowbreak 
\xf_v$ into the three bends
associated to $K_t$, by inserting new bends into
$Q_{2t}$, $P_{2t}$ and $Q_{2t+1}$, and sometimes changing the
relative position of existing bends. The choice of the relative positions of
these bends is the key aspect of our reduction. 

\begin{figure}
\begin{center}
\includegraphics{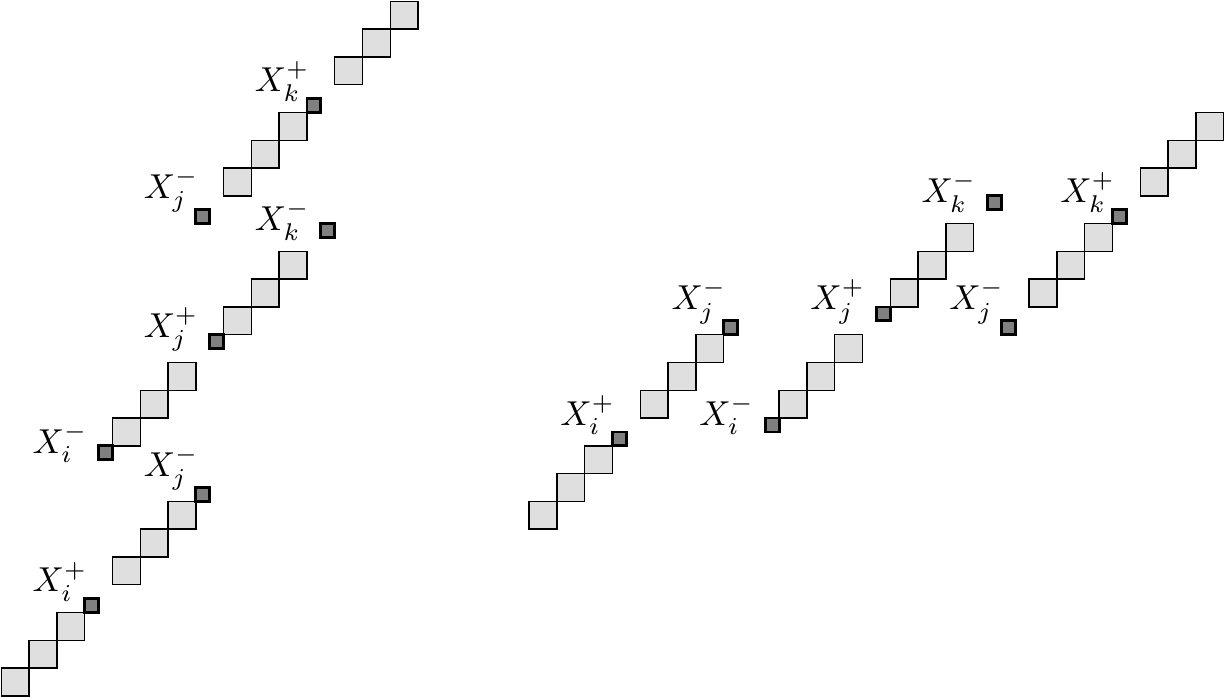}
\caption{The fork gadget (left) and the corresponding merge gadget (right) for 
the clause $(x_i \vee \neg x_j \vee x_k)$. Each light square indicates the 
positions of two bends on the two staircases representing a variable other than 
$x_i$, $x_j$ and $x_k$. Each dark square indicates the position of one bend on a 
staircase representing one of the literals $x_i,\lnot x_i, x_j,\lnot 
x_j,x_k,\lnot x_k$. 
}
\label{fig-gadget}
\end{center}
\end{figure}

We first describe how to modify $Q_{2t}$. We will replace 
$Q_{2t}$ by a so called \emph{fork gadget} $\moQ_{2t}$ containing the 
$(2t)$th outer bends of all the staircases $\xf_i$ and $\xt_i$ and also the 
corresponding bends of their bypasses, if any. The gadget $\moQ_{2t}$ is a 
union 
of three disjoint increasing sequences, which we call the \emph{top level}, the 
\emph{middle level} and the \emph{bottom level}; see Figure~\ref{fig-gadget}. 
As 
the names suggest, the top level is above the middle one, and the middle one is 
above the bottom one.

The top level contains, in left-to-right order, the bend of $\xm_j$, the bends 
of $\xt_{j+1},\allowbreak \xf_{j+1},\allowbreak \xt_{j+2},\allowbreak 
\xf_{j+2},\dotsc,\xt_{k-1},\xf_{k-1}$, followed by the bend of
$\xp_k$, and finally the bends of 
$\xt_{k+1},\allowbreak\xf_{k+1},\allowbreak\xt_{k+2},\allowbreak\xf_{k+2},\dotsc
,\allowbreak\xt_v,\xf_v$.

The middle level contains, left to right, the bends of 
$\xm_i,\allowbreak\xt_{i+1},\allowbreak\xf_{i+1}, \dotsc,\allowbreak 
\xt_{j-1},\allowbreak \xf_{j-1},\allowbreak \xp_j$, the 
bypasses of $\xt_{j+1},\allowbreak \xf_{j+1},\allowbreak\xt_{j+2},\allowbreak 
\xf_{j+2},\dotsc,\allowbreak\xt_{k-1},\allowbreak\xf_{k-1}$, and 
finally the bend of~$\xm_k$.

The bottom level contains the bends of $\xt_1, \xf_1, \xt_2, \xf_2, \dotsc, 
\xt_{i-1}, 
\xf_{i-1},\allowbreak \xp_i$, the bypasses of $\xt_{i+1},\allowbreak 
\xf_{i+1},\allowbreak \xt_{i+2},\allowbreak \xf_{i+2},\dotsc,\allowbreak 
\xt_{j-1},\allowbreak \xf_{j-1}$, and the bypass of~$\xm_j$.

Note that $\moQ_{2t}$ does not contain the permutation $321$: indeed, 
any occurrence of $321$ would have to intersect all three levels, since each 
level is an increasing sequence. However, the only elements of the top level 
that have any element of the bottom level to the right of them are the two 
elements of the bend of~$\xm_j$, and neither of these two elements belongs to 
an 
occurrence of $321$.

We then replace the inner bend $P_{2t}$ by a single increasing sequence 
$\moP_{2t}$ containing the first inner bend of all the staircases and bypasses 
emerging from $\moQ_{2t}$. Note that the vertical position of these staircases 
is already determined by~$\moQ_{2t}$. 

Finally, we replace $Q_{2t+1}$ by a \emph{merge gadget} $\moQ_{2t+1}$, in which 
all the bypasses that forked in $\moQ_{2t}$ will be merged. Note that the 
horizontal relative position of the bends in $\moQ_{2t+1}$ is determined 
uniquely by $\moP_{2t}$, while their relative vertical position is fixed 
by~$P_{2t+1}$. Essentially, $\moQ_{2t+1}$ is similar to a transpose of 
$\moQ_{2t}$ along the North-East diagonal, except that a fork of a bypass forms 
an occurrence of $3412$ in $\moQ_{2t}$, whereas the corresponding merge forms 
an 
occurrence of $2314$ in~$\moQ_{2t+1}$. In particular, we may again see that 
$\moQ_{2t+1}$ is $321$-free.

Performing the above-described modifications for each $t\in\{1,2,\dotsc,c\}$, 
we obtain the text~$\tau$. We let $\mo{Y}$ denote the part of $\tau$ that was 
obtained by replacing the bends of $Y$ by the corresponding gadgets; in other 
words, $\mo{Y}$ contains all the elements of $\tau$ except the anchor.

We let $\moxt_i$ (or $\moxf_i$) denote the subpermutation obtained as the union 
of $\xt_i$ (or $\xf_i$) and all the bypasses added to it.

It remains to determine the value of $M$, i.e., the length of the anchors of 
$\pi$ and~$\tau$. We choose $M$ to be equal to $|\mo{Y}|+1$, that is, the 
elements of the anchor $A'$ will outnumber the remainder of~$\tau$.

\subsection{Correctness}
We now verify that the construction has the desired properties.

\begin{proposition}\label{pro-correct}
Let the formula $\Phi$, pattern $\pi$ and text $\tau$ be as described in the 
previous subsection. 
Then $\Phi$ is satisfiable if and only if $\tau$ contains~$\pi$.
\end{proposition}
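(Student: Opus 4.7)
The plan is to prove the biconditional by handling the two implications separately. For the \emph{forward direction} (satisfiability implies containment), given a satisfying assignment $\phi$, I construct an embedding $\varphi$ of $\pi$ into $\tau$ by mapping each $X_i$ into $\moxt_i$ if $\phi(x_i) = 1$ and into $\moxf_i$ otherwise; in particular, the base of $X_i$ lands on $b_i^\text{T}$ or $b_i^\text{F}$, which are the only two increases inside the modified $3412$-pattern on $b_i^\text{T} \cup b_i^\text{F}$. At each clause $K_t = L_i \lor L_j \lor L_k$, at least one literal, say $L_i$, is satisfied, so the staircase selected for $x_i$ coincides with $\moxp_i$ at $K_t$; I then route its bends through the direct (non-bypass) path of the three gadgets $\moQ_{2t}, \moP_{2t}, \moQ_{2t+1}$, while the staircases chosen for the other variables take the bypasses installed for them. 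Finally, the anchor $A$ maps into $A'$, which has the required length. A direct verification of the bend inequalities confirms that $\varphi(\pi)$ is order-isomorphic to $\pi$.

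For the \emph{backward direction}, I argue in three stages. \emph{Stage~1:} For any embedding $\varphi$, I show $\varphi(A) \subseteq A'$. The size equation $|A| = |\mo{Y}| + 1$ forces at least one anchor element into $A'$; combined with the positional constraints on $A$ (below $X$ and between the columns of the base and the first inner bend of $X$) and the fact that $A'$ is the only increasing region of $\tau$ lying below the staircase part in the appropriate horizontal window, this pins the whole $\varphi(A)$ inside $A'$, whence $\varphi(X) \subseteq \mo{Y}$. \emph{Stage~2:} For each $i \in [v]$, the base of $X_i$ is a single $12$-pattern, and the set $b_i^\text{T} \cup b_i^\text{F}$ is a $3412$ whose only $12$-patterns are $b_i^\text{T}$ and $b_i^\text{F}$ themselves, so the image of the base of $X_i$ lies entirely inside one of these two. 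Iterated application of Lemma~\ref{lem-bypass} propagates this choice through the remaining $2c$ bends, so $\varphi(X_i)$ lies either entirely inside $\moxt_i$ (plus its bypasses) or entirely inside $\moxf_i$ (plus its bypasses). Define $\phi(x_i) = 1$ if and only if $\varphi(X_i) \subseteq \moxt_i$. \emph{Stage~3:} I show $\phi$ satisfies every clause. If $\phi$ were to falsify some $K_t = L_i \lor L_j \lor L_k$, the three staircases used at $K_t$ would be $\moxm_i, \moxm_j, \moxm_k$, whose bends inside $\moQ_{2t}$ all sit on the middle or top levels, never on the bottom level reserved for the bends of $\moxp_i, \moxp_j, \moxp_k$. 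A position-by-position comparison of these bends with those forced for the remaining variables inside $\moQ_{2t}, \moP_{2t}, \moQ_{2t+1}$ then shows that some monotonicity inequality required for the $v$-fold staircase image must fail, the desired contradiction.

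I expect Stage~3 to be the main obstacle. The fork and merge gadgets combine three horizontal levels of interleaved bends together with the bypasses installed for this clause and for its neighbours, so ruling out \emph{every} routing of $X$ that is not supported by a true literal of $K_t$ requires a meticulous case analysis; in particular, one has to verify that bypasses installed at adjacent clauses cannot be combined in an unintended way to rescue an unsatisfied routing, and that the merge gadget $\moQ_{2t+1}$ really enforces the same choice as the fork gadget $\moQ_{2t}$ for every staircase passing through it.
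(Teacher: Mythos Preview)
Your overall approach matches the paper's, but there are two concrete issues.

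\textbf{Stage 1 to Stage 2 is missing a step.} From ``$\varphi(X)\subseteq\mo{Y}$'' you jump directly to ``the image of the base of $X_i$ lies inside $b_i^\text{T}\cup b_i^\text{F}$''. You have not said why the base of $X_i$ lands in the \emph{$i$th} four-element block rather than some other part of~$\mo{Y}$. The paper's argument is a counting argument: once one anchor element lands in $A'$, the base of $X$ (which lies to the left of and above $A$) must land in the base of~$\mo{Y}$; the base of $X$ is an increasing sequence of length $2v$, while the base of $\mo{Y}$ consists of $v$ blocks each order-isomorphic to $3412$, so any increasing subsequence picks up at most two elements from each block. Hence an increasing subsequence of length $2v$ uses exactly two elements from every block, and those two must be one of $b_i^\text{T}$, $b_i^\text{F}$. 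Without this step, Stage~2 has no starting point.

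\textbf{Your Stage 3 contains a factual error.} You write that the bends of $\moxm_i,\moxm_j,\moxm_k$ inside $\moQ_{2t}$ ``all sit on the middle or top levels, never on the bottom level''. This is false: the \emph{bypass} bend of $\xm_j$ is the last element of the bottom level. The correct obstruction, as in the paper, is simply that no selection of one bend from each of $\moxm_i,\moxm_j,\moxm_k$ forms an increasing sequence: $\xm_i$ and $\xm_k$ each have a unique bend, both on the middle level, while the two available bends of $\xm_j$ are either above both (top level) or below both (bottom level). This is a two-case check, not the elaborate analysis you anticipate.

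Finally, your worry about ``bypasses installed at adjacent clauses'' interfering is unfounded. Lemma~\ref{lem-bypass} and the induction along the staircase already confine the image of each bend of $X_i$ to the corresponding bend of $X^{\phi(i)}_i$ or its bypass within the \emph{same} gadget; adjacent clauses use disjoint outer bends $\moQ_{2t},\moQ_{2t+1}$ versus $\moQ_{2t+2},\moQ_{2t+3}$ and cannot mix. The consistency between $\moQ_{2t}$ and $\moQ_{2t+1}$ is also not a separate verification: the merge gadget is the transpose of the fork gadget, so an increasing selection in one is automatically increasing in the other.
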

\begin{proof}
Suppose that $\Phi$ is satisfiable. Fix any satisfying assignment, and 
represent 
it by a function $\phi\colon\{1,2,\dotsc,v\}\to\{\text{T},\text{F}\}$, where 
$\phi(i)=\text{T}$ if and only if the variable $x_i$ is true in the chosen 
satisfying assignment. We obtain a copy of $\pi$ in $\tau$ as follows. The 
elements of the anchor $A$ of $\pi$ will be mapped to the anchor $A'$ 
of~$\tau$. 

It remains to map the $v$-fold staircase $X=(X_1,X_2,\dotsc,X_v)$ to $\mo{Y}$. 
We 
will 
show that it is possible to map each $X_i$ into $\mo{X}^{\phi(i)}_i$. To obtain 
such a mapping, we need to decide, for every bypass appearing in 
$\mo{X}^{\phi(i)}_i$, whether to map $X_i$ to the bends of the bypass or to the 
bends of $X^{\phi(i)}_i$. The decision can be made for each bypass 
independently, but we need to make sure that for each gadget $\moQ_m$ in 
$\mo{Y}$, the bends mapped into $\moQ_m$ will form an increasing sequence. 

It can be verified by a routine case analysis that such a choice is always 
possible. To see this, suppose that $K_t$ is a clause $(L_i\lor L_j\lor 
L_k)$ whose literals contain variables $x_i$, $x_j$ and $x_k$, respectively, 
with $i<j<k$. Assume for instance, that the assignment $\phi$ satisfies $L_i$ 
but not $L_j$ and $L_k$. Then the $(2t)$th outer bends of $X_i$, $X_j$ and 
$X_k$ must be mapped to the bends of $\moxp_i$,  $\moxm_j$ and  $\moxm_k$ in 
$\moQ_{2t}$. The bends of $\moxp_i$ and $\moxm_k$ are unique, and to 
preserve monotonicity, we need to choose the bend of $\moxm_j$ in the bottom 
level of $\moQ_{2t}$, i.e., the bypass of $\xm_j$. Then all the staircases 
$X_1,X_2,\dotsc,X_j$ may be mapped to bends in the bottom level of $\moQ_{2t}$, 
while $X_{j+1},X_{j+2},\dotsc,X_v$ may be mapped to bends in the middle level, 
preserving monotonicity. 

Notice that the position of the bends in $\moQ_{2t+1}$ is the transpose along 
the North-East diagonal of their position in $\moQ_{2t}$, and in particular, 
the 
bends will form an increasing sequence in $\moQ_{2t+1}$ as well. 

We conclude that if $\phi$ is a satisfying assignment of $\Phi$, then $\pi$ 
occurs in~$\tau$. For the converse, suppose that $\pi$ has an occurrence 
in~$\tau$. Since the anchor $A$ of $\pi$ is longer than $\mo{Y}$, at least one 
element of $A$ must be mapped to an element of~$A'$. In particular, all the 
elements to the left and above $A$ are mapped to elements to the left and 
above~$A'$. In other words, the base of $X$ gets mapped to a subset of the 
base of~$\mo{Y}$. 

Recall that the base of $\mo{Y}$ is an increasing sequence of $v$ blocks of size 
$4$, where the $i$th block is the union of the base $b_i^\text{T}$ of $\moxt_i$ 
with the base $b_i^\text{F}$ of $\moxf_i$. Recall also, that each of these $v$ 
blocks is order isomorphic to $3412$, and in particular, the longest increasing 
subsequence of each block has size $2$, and there are exactly two such 
subsequences, namely $b_i^\text{T}$ and~$b_i^\text{F}$. 

This implies that any increasing subsequence of length $2v$ of the base of 
$\mo{Y}$ contains exactly one of $b_i^\text{T}$ and~$b_i^\text{F}$ for 
each~$i$. In particular, in an occurrence of $\pi$ in $\tau$, the base of 
$X_i$ is mapped either to $b_i^\text{T}$ or to $b_i^\text{F}$. Fix an 
occurrence of $\pi$ in $\tau$ and use it to define a truth assignment 
$\phi\colon[v]\to\{\text{T},\text{F}\}$, so that the base of $X_i$ is mapped 
to~$b_i^{\phi(i)}$. 

We claim that the assignment $\phi$ satisfies~$\Phi$. To see this, we first 
argue that for every $i\in[v]$, the elements of $X_i$ are mapped to elements of 
$\mo{X}^{\phi(i)}_i$, and more specifically, each (inner or outer) 
bend on $X_i$ is mapped either to the corresponding bend of 
$X^{\phi(i)}_i$ or to the corresponding bend of a bypass of~$X^{\phi(i)}_i$. We 
have already seen that this is the case for the base of~$X_i$. To show that it 
holds for the remaining bends also, we may proceed by induction and simply note 
that the only elements sandwiched from below by an inner bend in 
$\mo{X}^{\phi(i)}_i$ are the elements of the subsequent outer bend of 
$\mo{X}^{\phi(i)}_i$, or perhaps a pair of outer bends forming the fork of a 
bypass. An analogous property holds for outer bends as well. Using 
Lemma~\ref{lem-bypass}, we may then conclude that the bends of $X_i$ map to 
corresponding bends in~$\mo{X}^{\phi(i)}_i$. 

To see that $\phi$ is satisfying, assume for contradiction that there is a 
clause $K_t$ whose literals involve the variables $x_i, x_j$ and $x_k$, and 
neither of these literals is satisfied by~$\phi$. It follows that inside the 
gadget 
$\moQ_{2t}$, the bends of $X_i$, $X_j$ and $X_k$ must map to the bends of 
$\moxm_i$, $\moxm_j$ and $\moxm_k$. However, no three such bends form an 
increasing sequence in $\moQ_{2t}$, whereas the corresponding bends form an 
increasing sequence in~$\pi$. This contradiction completes the proof of the 
proposition.
\end{proof}

\begin{proof}[Proof of Theorem~\ref{thm-main}]
Clearly, the problem is in the class NP. It is easy to observe that in our 
reduction, $\pi$ belongs to $\C 2 0$. To see that $\tau$ belongs to 
$\C 3 0$, it suffices to note that the gadgets $\moQ_m$ used in the 
construction of $\tau$ all avoid $321$, and that the base of $\mo{Y}$ avoids 
$321$ as well. Note also that the base of $\mo{Y}$ is to the left and below all 
the gadgets~$\moQ_m$. Clearly, $\pi$ and $\tau$ can be constructed from $\Phi$ 
in polynomial time, and the correctness of the reduction follows from 
Proposition~\ref{pro-correct}.
\end{proof}

\section{\texorpdfstring{Patterns from a proper subclass of $\Av(321)$}
{Patterns from a proper subclass of Av(321)}}\label{sec-poly}

In this section we prove Theorem~\ref{veta_polynomialni}.
We rely on a result by Ahal and Rabinovich~\cite{AR08_subpattern}, who showed 
that for patterns
with bounded ``treewidth'', the PPM problem can be solved in polynomial time. 
Our main contribution is in showing that patterns in $\Av(321)$ of large ``treewidth'' contain a large universal pattern, containing all patterns in $\Av(321)$ of a given size. To show this, we use the grid minor theorem by Robertson and Seymour~\cite{Ch16_grid,RS86_minorsV_grid}.

\subsection{Permutations and treewidth}

The following definition was introduced by Ahal and Rabinovich~\cite[Definition 2.3]{AR08_subpattern}.
For a $k$-permutation $\pi$, a graph $G_{\pi}$ is defined as follows; see 
Figure~\ref{obr_G_pi}. The vertices of $G_{\pi}$ are the numbers $1,2,\dots,k$, 
interpreted as the elements of $\pi=\pi(1),\pi(2),\dots,\pi(k)$. Two vertices 
$\pi(i),\pi(j)$ are connected by an edge if $|i-j|=1$ or 
$|\pi(i)-\pi(j)|=1$. We say that an edge between $\pi(i)$ and $\pi(j)$ is
\emph{red} if $|\pi(i)-\pi(j)|=1$, and it is \emph{blue} if
$|i-j|=1$. Note that an edge can be both red and blue. 
Clearly, the edges of each color form a Hamiltonian path in~$G_{\pi}$. We note 
that in our definition, $G_{\pi}$ is a graph, whereas Ahal and 
Rabinovich~\cite{AR08_subpattern} defined $G_{\pi}$ as a multigraph. Also, we 
label the vertices of $G_{\pi}$ by their value rather than position in~$\pi$.

\begin{figure}
\begin{center}
\includegraphics{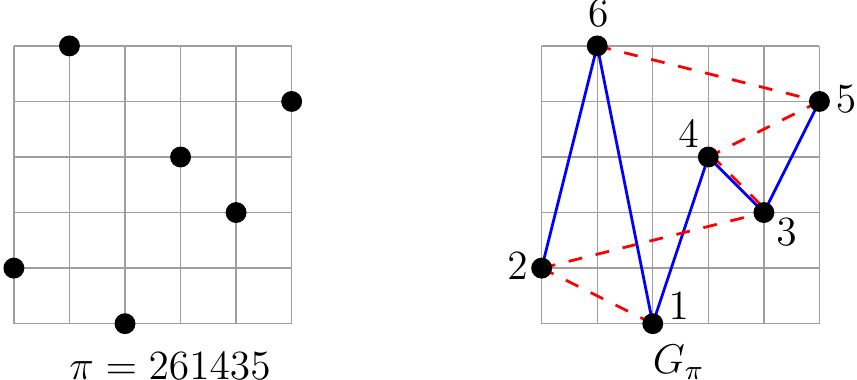}
\end{center}
\caption{A permutation $\pi$ and the graph $G_{\pi}$.}
\label{obr_G_pi}
\end{figure}

Let $\mathrm{tw}(G)$ be the treewidth of a graph $G$. The main result of Ahal and Rabinovich~\cite{AR08_subpattern} can be stated in the following form.

\begin{theorem}[{\cite[Theorem 2.10, Proposition 3.6]{AR08_subpattern}}]\label{veta_o_omezene_sirce}
For $\pi\in S_k$ and $\tau\in S_n$, the problem whether $\pi$ is contained in $\tau$ can be solved in time $O(kn^{2\cdot\mathrm{tw}(G_{\pi})+2})$.
\end{theorem}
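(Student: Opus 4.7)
The plan is the standard tree-decomposition dynamic program for subgraph-type problems, with one observation specific to the permutation setting (local edge constraints suffice) and one careful accounting to obtain the exponent $2w+2$ rather than $w+1$. Throughout, let $w=\mathrm{tw}(G_\pi)$ and fix a tree decomposition $(T,\{B_t\})$ of $G_\pi$ of width $w$; one can either assume it is given together with $\pi$, or compute it in time polynomial in $k$ with an additional factor depending only on $w$, which is dominated by the $n^{2w+2}$ bound. Root $T$ arbitrarily; by merging a bag into its neighbour whenever one contains the other, one may ensure $T$ has $O(k)$ nodes.

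First I would verify that an occurrence of $\pi$ in $\tau$ is the same object as a map $f\colon[k]\to[n]$ (sending the vertex labelled by value $v$ to a position of $\tau$) satisfying exactly one inequality per edge of $G_\pi$: for each blue edge $\{\pi(i),\pi(i+1)\}$, $f(\pi(i))<f(\pi(i+1))$; for each red edge $\{v,v+1\}$, $\tau(f(v))<\tau(f(v+1))$. Since the blue edges form a Hamiltonian path that traverses the positions of $\pi$ in order and the red edges form a Hamiltonian path that traverses the values of $\pi$ in order, chaining these $2(k-1)$ pairwise inequalities reconstructs the full position ordering $f(\pi(1))<\dotsb<f(\pi(k))$ and the full value ordering $\tau(f(1))<\dotsb<\tau(f(k))$. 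Consequently, verifying local consistency along every edge of $G_\pi$ is equivalent to verifying a global order-isomorphic occurrence.

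For each node $t$ of $T$, maintain a Boolean table $D_t$ indexed by maps $f_t\colon B_t\to[n]$ that are locally consistent on every edge of $G_\pi$ whose endpoints both lie in $B_t$; set $D_t[f_t]=\mathrm{TRUE}$ iff $f_t$ extends to a locally consistent map on all vertices of $G_\pi$ appearing in the subtree rooted at $t$. For a leaf node this is immediate. For an internal node $t$ with children $c_1,\dotsc,c_s$, set $D_t[f_t]=\mathrm{TRUE}$ iff for every $c_i$ there is a map $g\colon B_{c_i}\to[n]$ with $D_{c_i}[g]=\mathrm{TRUE}$ and $g$ agreeing with $f_t$ on $B_t\cap B_{c_i}$. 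The two defining properties of a tree decomposition — every edge lies in some bag, and the bags containing any fixed vertex form a subtree — guarantee that $D_{\mathrm{root}}[f]=\mathrm{TRUE}$ for some $f$ iff $\tau$ contains $\pi$. Each table has at most $n^{|B_t|}\le n^{w+1}$ entries, and merging a child into its parent iterates over $n^{w+1}\cdot n^{w+1}$ pairs of assignments, giving $O(n^{2w+2})$ work per tree edge; summing over the $O(k)$ edges of $T$ yields the claimed $O(kn^{2w+2})$ running time.

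The only genuinely nontrivial step is the reduction to edge-local constraints: it must be spelled out, using the Hamiltonian-path structure of the red and blue edges, why enforcing $O(k)$ binary inequalities implies all $\binom{k}{2}$ order relations between pairs of elements. Once that observation is in place, the remainder is a textbook tree-decomposition DP; the slight subtlety is merely to ensure that the combining step is organised so that the exponent is $2w+2$ (comparing a parent assignment against a child assignment) rather than something larger, and that the number of bags is linear in $k$ rather than quadratic.
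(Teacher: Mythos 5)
This is a result the paper deliberately imports as a black box (it is cited to Ahal and Rabinovich, Theorem 2.10 and Proposition 3.6) and never proves, so there is no in-paper argument to compare against; your proposal is a self-contained derivation. Your two main ingredients are sound: the observation that an occurrence of $\pi$ in $\tau$ is exactly a map $f\colon[k]\to[n]$ satisfying one binary inequality per edge of $G_\pi$ (the two Hamiltonian paths chain the local inequalities into the full position order and the full value order, which together are the definition of an occurrence), and the standard tree-decomposition dynamic program, whose correctness and $O(kn^{2w+2})$ accounting you state correctly. This is close in spirit to what Ahal and Rabinovich actually do, except that they phrase the dynamic program in terms of a ``tree-like'' vertex ordering (an elimination-order-style width parameter) and then relate that parameter to treewidth, which is where the factor $2$ in the exponent originates in their write-up; your route gets the same exponent directly from comparing a full parent assignment against a full child assignment.

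The one soft spot is your dismissal of how the tree decomposition is obtained. Your fallback, an algorithm ``polynomial in $k$ with an additional factor depending only on $w$,'' describes Bodlaender's $2^{O(w^3)}\cdot k$ algorithm, and that factor is \emph{not} dominated by $n^{2w+2}$ when $w$ grows with $k$ (e.g.\ $w\approx\sqrt{k}$ gives $2^{\Theta(k^{3/2})}$ versus $n^{O(\sqrt{k})}$). To stay within the stated time bound for all inputs you should instead invoke an algorithm such as Arnborg--Corneil--Proskurowski, which finds a width-$w$ decomposition in time $O(k^{w+2})\le O(n^{w+2})$, comfortably inside the budget; alternatively, note that in this paper the theorem is only ever applied to classes where $w$ is bounded by a constant, so any FPT decomposition algorithm suffices there. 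With that repair the proof is complete.
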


\subsection{Treewidth, grids and walls}

The \emph{$r\times r$ grid} is the graph with vertex set $[r]\times [r]$ where 
vertices $(i,j)$ and $(i',j')$ are joined by an edge if and only if 
$|i-i'|+|j-j'|=1$. See Figure~\ref{obr_grid_wall}, left.

Robertson and Seymour~\cite{RS86_minorsV_grid} proved that for every $r$, every graph of sufficiently large treewidth contains the $r\times r$ grid as a minor. Recently, Chekuri and Chuzhoy~\cite{ChCh14_grid} showed that a treewidth polynomial in $r$ is sufficient. The upper bound has been further improved by Chuzhoy~\cite{Ch16_grid}.

\begin{theorem}[{\cite{Ch16_grid}}]\label{veta_grid}
There is a function $f\colon\mathbb{N}\rightarrow \mathbb{N}$ satisfying $f(r) 
\le r^{19}(\log{r})^{O(1)}$ such that every graph of treewidth at least $f(r)$ 
contains the $r\times r$ grid as a minor.
\end{theorem}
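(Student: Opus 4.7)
The plan is to follow the general strategy used throughout the grid-minor theorem literature: translate large treewidth into a more tractable combinatorial witness (a bramble, tangle, or well-linked set), build an intermediate highly structured object called a \emph{path-of-sets system} from that witness, and finally extract a grid minor from the path-of-sets system by a routing argument. The non-trivial content is all in controlling the quantitative losses, which is what pushes the bound down from the tower-type bound of Robertson--Seymour to the polynomial $r^{19}(\log r)^{O(1)}$.

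First, I would use the standard duality for treewidth to pass from the hypothesis $\mathrm{tw}(G)\ge f(r)$ to a well-linked witness. Concretely, I aim to extract a vertex set $W$ of size polynomial in $r$ that is $\alpha$-well-linked for some absolute constant $\alpha>0$, meaning that for every vertex separator $X$ with $|X|<\alpha|W|$, no component of $G-X$ contains more than $\tfrac{2}{3}|W|$ vertices of $W$. This step is essentially standard: from a bramble (or tangle) of order $\Omega(f(r))$ one can in polynomial time distill a well-linked set of comparable size.

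Second, and this is the heart of the argument, I would iteratively transform the well-linked set into a path-of-sets system of length $\ell$ and width $w$, both at least $r$. Such a system consists of pairwise vertex-disjoint connected subgraphs $S_1,\dots,S_\ell$ together with, for each $i$, a collection of $w$ vertex-disjoint paths from $S_i$ to $S_{i+1}$ internally disjoint from all $S_j$. The construction proceeds inductively: one uses multicommodity-flow/routing arguments on the well-linked set to split off a pair of connected subgraphs joined by many vertex-disjoint paths, restores well-linkedness inside each of them, and recurses. The delicate point is to control the multiplicative loss in well-linkedness at each recursive step so that only polynomially many invocations (and hence only polynomial starting size) suffice to reach parameters $\ell,w\ge r$.

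Third, from a path-of-sets system of length and width at least $r$, extracting an $r\times r$ grid minor is a comparatively routine routing exercise: inside each $S_i$, pick $r$ internally vertex-disjoint paths connecting the prescribed $r$ endpoints on its ``left'' boundary to the prescribed $r$ endpoints on its ``right'' boundary (possible because each $S_i$ is well-linked), and then use the connecting paths between consecutive $S_i$ and $S_{i+1}$ as the horizontal edges of the grid. Contracting each such path to a vertex produces the required $r\times r$ grid as a minor.

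The main obstacle is unambiguously the second step: the construction of the path-of-sets system with only polynomial blow-up. The first step is classical and the third is a direct routing argument, but keeping the overall loss polynomial requires the careful amortization and iterative well-linked decomposition that Chuzhoy introduced, and this is precisely what gives rise to the exponent $19$ and the polylogarithmic factor in the bound $f(r)\le r^{19}(\log r)^{O(1)}$.
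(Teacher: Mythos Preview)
The paper does not give its own proof of this statement: Theorem~\ref{veta_grid} is quoted verbatim as a result of Chuzhoy~\cite{Ch16_grid} and is used as a black box in the proof of Theorem~\ref{veta_polynomialni}. There is therefore nothing in the paper to compare your proposal against.

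As for the proposal itself, what you have written is a faithful high-level outline of the Chekuri--Chuzhoy and Chuzhoy strategy (well-linked set $\to$ path-of-sets system $\to$ grid minor), and you have correctly identified that the second step carries essentially all of the quantitative difficulty. However, as a proof it is only a sketch: the specific exponent $19$ and the polylogarithmic factor arise from a long and delicate accounting of losses across many iterations of well-linked decompositions, boosting, and clustering arguments, none of which you have actually carried out. In the context of the present paper this is entirely appropriate --- the theorem is an imported tool, not something to be reproved here --- but you should be aware that your write-up is a roadmap rather than a proof, and the paper's authors made the same choice by simply citing the result.
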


\begin{figure}
\begin{center}
\includegraphics{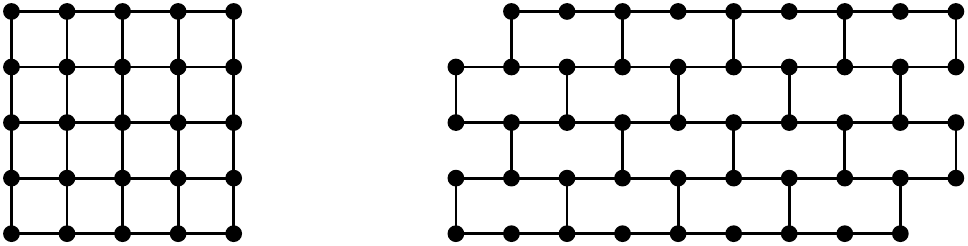}
\end{center}
\caption{Left: a $5\times 5$ grid. Right: an elementary wall of height $4$.}
\label{obr_grid_wall}
\end{figure}

Since grids have vertices of degree $4$, it is more convenient to consider their subgraphs of maximum degree $3$, called walls.
Let $r\ge 2$ be even. An \emph{elementary wall of height 
$r$}~\cite{RS95_minorsXIII_wall} is obtained from the $(r+1)\times(2r+2)$ grid 
by removing two opposite corners $(1,2r+2)$ and $(r+1,1)$, edges 
$\{(i,2j),(i+1,2j)\}$ for $i$ odd and $1\le j\le r+1$, and edges 
$\{(i,2j-1),(i+1,2j-1)\}$ for $i$ even and $1\le j\le r+1$. That is, an 
elementary wall of height $r$ is a planar graph of maximum degree $3$, which can 
be drawn as a ``wall'' with $r$ rows of $r$ ``bricks'', where each ``brick'' is 
a face of size $6$. See Figure~\ref{obr_grid_wall}, right.
A subdivision of an elementary wall of height $r$ is called a \emph{wall of height $r$} or simply an \emph{$r$-wall}. It is well known that if $H$ is a graph of maximum degree $3$, then a graph $G$ contains $H$ as a minor if and only if $G$ contains a subdivision of $H$ as a subgraph. Therefore, a graph containing the $(2r+2)\times (2r+2)$ grid as a minor also contains an $r$-wall as a subgraph.

\subsection{\texorpdfstring{Universal patterns in $\Av(321)$}
{Universal patterns in Av(321)}}
Let $\omega$ be a finite sequence of $n$ distinct positive integers. The \emph{reduction} of $\omega$ is an $n$-permutation obtained from $\omega$ by replacing the $i$th smallest element by $i$, for every $i\in [n]$.

For every $k\in\mathbb{N}$, we define the \emph{$k$-track} as the
$k^2$-permutation that is the reduction of the sequence $(1,2k,3,2k+2,5,2k+4,\dots,k^2-1,k^2+2k-2)$ if $k$ is even, and the reduction of $(1,2k,3,2k+2,5,2k+4,\dots,k^2-k-1,k^2+k-2,k^2-k+1,k^2-k+3,\dots,k^2+k-1)$ if $k$ is odd. See Figure~\ref{obr_k_track}. The $k$-track clearly avoids $321$ since it is a union of two increasing sequences. In Lemma~\ref{lemma_ktrack} we will show that the $k$-track is a universal pattern for all 321-avoiding $k$-permutations.

\begin{figure}
\begin{center}
\includegraphics{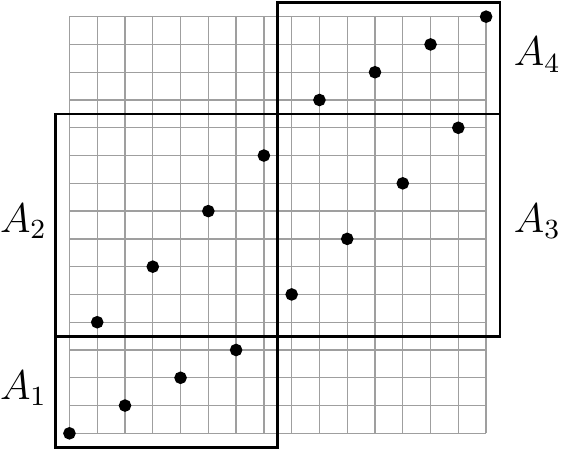}
\end{center}
\caption{The $4$-track and its stair-decomposition.}
\label{obr_k_track}
\end{figure}

We use the stair-decomposition of $321$-avoiding permutations introduced by Guillemot and Vialette~\cite{GV09_321} and independently, in a slightly different way, by Albert et al.~\cite{AABRSW10_staircase_decomp}.
Let $\pi\in\Av(321)$ be a $k$-permutation. A \emph{stair-decomposition} of $\pi$ is a partition of 
$[k]$, regarded as the set of elements of $\pi$, into possibly empty subsets 
$B_1, B_2,\dots,B_m$, for some $m$, 
such that 
\begin{itemize}
\item each $B_i$ forms an increasing subsequence in $\pi$,
\item $B_{2i}$ is above $B_{2i-1}$ for each $i \le \lfloor m/2 \rfloor$,
\item $B_{2i+1}$ is to the right of $B_{2i}$ for each $i \le \lceil m/2 \rceil-1$, and 
\item $B_{i+2}$ is above and to the right of $B_i$ for each $i\le m-2$.
\end{itemize}

The subsets
$B_i$ are called the \emph{blocks} of the stair-decomposition. Sometimes it will 
be convenient to refer to blocks $B_0$ or $B_{m+1}$, which we define as empty 
sets.

The $k$-track has a stair-decomposition into $k$ blocks 
$A_1,A_2,\dots,A_{k}$, each containing exactly $k$ elements; see Figure~\ref{obr_k_track}. Moreover, for every $i\le k/2$, the subset 
$A_{2i-1}\cup A_{2i}$ forms a \emph{vertical alternation} in the $k$-track, that is, 
$A_{2i}$ is above $A_{2i-1}$ and the elements from $A_{2i-1}$ alternate from left to right with the elements from $A_{2i}$ in the $k$-track. Similarly, for every $i\le (k-1)/2$, the subset $A_{2i}\cup A_{2i+1}$ forms a \emph{horizontal alternation} in the $k$-track, that is, $A_{2i+1}$ is to the right of $A_{2i}$ and the elements of $A_{2i}$ alternate from bottom to top with the elements of $A_{2i+1}$ in the $k$-track.

Guillemot and Vialette~\cite{GV09_321} state the following lemma without proof. 
Albert et al.~\cite{AABRSW10_staircase_decomp} give an algorithm to compute the 
stair-decomposition but do not include verification of correctness.

\begin{lemma}\label{lemma_ex_stair_dec}
Every $321$-avoiding $k$-permutation has a stair-decomposition with at most $k$ 
blocks.
\end{lemma}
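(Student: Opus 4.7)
I plan to prove the lemma by induction on $k$. The base case $k=0$ is the empty decomposition. For the inductive step, let $\pi \in \Av(321)$ have length $k \ge 1$, and let $j = \pi^{-1}(1)$ denote the position of the smallest value. The structural consequence of $321$-avoidance I want to exploit is that the prefix $\pi_1 := \pi(1),\dots,\pi(j-1)$ is increasing: any descent $\pi(i_1) > \pi(i_2)$ with $i_1 < i_2 < j$ would combine with $\pi(j)=1$ to form a forbidden $321$-pattern. Set $M := \max \pi_1$ (with $M = 0$ if $j=1$). An analogous argument shows that inside the suffix $\pi_2 := \pi(j+1),\dots,\pi(k)$, the elements with value at most $M$ form an increasing subsequence $L_2$ (any descent among them would, together with a suitable element of $\pi_1$, form a $321$). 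Note also that $M \notin \pi_2$, so the ``high-valued'' elements of $\pi_2$ (those with value exceeding $M$) are in fact all strictly greater than $M$.

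If $j=1$, then $\pi(1)=1$ and I apply the inductive hypothesis to $\pi(2),\dots,\pi(k)$, prepending the singleton $\{(1,1)\}$ to its first block; this preserves monotonicity (value $1$ is the smallest and at the leftmost position) and does not increase the block count, yielding at most $k-1$ blocks.

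If $j \ge 2$, I apply a strengthened inductive hypothesis to $\pi_2$, obtaining a stair-decomposition $B_1^{(2)},\dots,B_{m_2}^{(2)}$ with $m_2 \le k-j$ and with the additional property that $B_1^{(2)} = L_2$. I then define the decomposition of $\pi$ by
\[
B_1 := \{(j,1)\}, \qquad B_2 := \pi_1, \qquad B_{i+2} := B_i^{(2)} \text{ for } i=1,\dots,m_2,
\]
giving $m = 2 + m_2 \le 2 + (k-j) \le k$ blocks. Most stair-decomposition conditions are immediate: $B_2$ lies above $B_1$ since $\pi_1$ has values $>1$; $B_3$ lies to the right of $B_2$ because positions in $\pi_2$ exceed $j-1$; and the constraints among $B_3, B_4, \dots$ are inherited from the decomposition of $\pi_2$. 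The only nontrivial requirement is that $B_4 = B_2^{(2)}$ be above-and-right of $B_2 = \pi_1$, which reduces to showing that $B_2^{(2)}$ has all values exceeding $M$. This follows because $B_1^{(2)} = L_2$ absorbs every low-valued element of $\pi_2$, so $B_2^{(2)}$ contains only high values, and these are strictly greater than $M$ as $M \notin \pi_2$.

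The main obstacle is justifying the strengthened inductive hypothesis: that $\pi_2$ admits a stair-decomposition whose first block equals the prescribed increasing subset $L_2$. While $L_2$ is a natural ``bottom-left'' candidate, integrating the remaining high-valued part of $\pi_2$ (itself a $321$-avoiding permutation) as $B_2^{(2)}, B_3^{(2)}, \dots$ requires that the parity of indices align correctly with the alternating above/right constraints and with the position constraint $B_3^{(2)}$ above-and-right of $B_1^{(2)} = L_2$. I expect to handle this via a secondary induction or by a direct structural argument on the relative positions of $L_2$ and the high elements within $\pi_2$, possibly inserting at most one empty auxiliary block; the budget $m_2 \le k-j$ leaves room for such an adjustment.
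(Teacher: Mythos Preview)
Your proposal has a genuine gap that you yourself flag: the ``strengthened inductive hypothesis'' that $\pi_2$ admits a stair-decomposition with first block exactly $L_2$ is never established, and your recursive scheme does not prove it. Your induction always sets the first block of any permutation to be the singleton $\{1\}$ (the minimum value); it never produces a decomposition whose first block is a prescribed set such as $L_2$. So the very hypothesis you need for the step $j\ge 2$ is not delivered by the induction you are running. Your suggestion of ``inserting at most one empty auxiliary block'' is also not safe: when $j=2$ you have $|\pi_2|=k-2$ and you need at most $k-2$ blocks after $B_1,B_2$, so there is no slack for an extra empty block. The $L_2=\emptyset$ subcase can in fact be handled, but not by the mechanism you propose.

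More importantly, if you try to close the gap directly---i.e., show that a $321$-avoider $\sigma$ of length $n$ with $\{1,\dots,\ell\}$ increasing has a stair-decomposition starting with that set and using at most $n$ blocks---you are essentially forced into a greedy construction: take $B_1=\{1,\dots,\ell\}$, then $B_2$ the maximal increasing prefix of the rest, then $B_3$ the maximal down-set of remaining values that is increasing, and so on, verifying the ``above/right'' constraints via $321$-avoidance at each stage (e.g., elements of $\sigma\setminus(B_1\cup B_2)$ lying left of the rightmost point of $B_1$ would create a $321$ with the value $\ell$). But this is precisely the paper's argument, which dispenses with the outer induction entirely: it runs the greedy algorithm once on $\pi$ itself and verifies the stair-decomposition axioms directly. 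Your inductive wrapper adds complexity without avoiding that core verification, so I would recommend dropping the induction and arguing greedily from the start.
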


\begin{proof}
Let $\pi\in\Av(321)$ be a $k$-permutation. We define a stair-decomposition of $\pi$ by a greedy algorithm. Let $B_1$ be the longest 
interval $[i]$ whose elements form an increasing subsequence in $\pi$. 
Let $\rho_1$ be the subsequence of $\rho_0=\pi$ formed by the elements in $[k] 
\setminus B_1$. Now let $B_2$ be the subset of $[k] \setminus B_1$ whose 
elements form the maximal increasing prefix of $\rho_1$. Let $\rho_2$ be the 
subsequence of $\rho_1$ obtained by removing the elements of $B_2$. We continue 
analogously. Suppose that $\rho_{2i}$ and $B_1,B_2,\dots, B_{2i}$ have been 
defined. Then let $B_{2i+1}$ be the maximal down-set of $[k]\setminus 
\bigcup_{j=1}^{2i} B_j$ forming an increasing subsequence in $\rho_{2i}$, and 
let $\rho_{2i+1}$ be the subsequence of $\rho_{2i}$ obtained by removing the 
elements of $B_{2i+1}$. Finally, let $B_{2i+2}$ be the subset of $[k] \setminus 
\bigcup_{j=1}^{2i+1} B_j$ whose elements form the maximal increasing prefix of 
$\rho_{2i+1}$, and let $\rho_{2i+2}$ be the subsequence of $\rho_{2i+1}$ 
obtained by removing the elements of $B_{2i+2}$. We continue until $\rho_{2i}$ 
or $\rho_{2i+1}$ is empty and we denote by $m$ the largest index such that $B_m$ 
is nonempty. Clearly $m\le k$.

Now we verify that $B_1,B_2,\dots,B_m$ is indeed a stair-decomposition of $\pi$. The facts that $B_{2i}$ and $B_{2i+1}$ are above $B_{2i-1}$, and that $B_{2i+2}$ and $B_{2i+1}$ are to the right of $B_{2i}$ for every $i$ follow directly from the construction. 

For every $i$, the block $B_{2i+1}$ is to the right of $B_{2i-1}$ since the set of elements above and to the left of $\max(B_{2i-1})$ forms an increasing subsequence in $\rho_{2i-2}$; a decreasing pair would form a forbidden pattern 321 with $\max(B_{2i-1})$. Finally, for every $i$, the block $B_{2i+2}$ is above $B_{2i}$ since the set of elements below and to the right of $\max(B_{2i})$ forms an increasing subsequence in $\rho_{2i-1}$; a decreasing pair would induce a forbidden pattern 321 with $\max(B_{2i})$.
\end{proof}

Albert et al.~\cite[Proposition 6]{AABRSW10_staircase_decomp} proved that each
321-avoiding permuta\-tion of size $k$ is contained in an $m$-track for some 
$m\le 2^k$. Using similar ideas, we observe the following stronger fact.

\begin{lemma}\label{lemma_ktrack}
Let $\pi\in\Av(321)$ be a $k$-permutation, and let $B_1,B_2,\dots, B_m$ be its 
stair-decomposition. Let $q=\max\{k,m\}$. Let $\tau_q$ be the $q$-track, and 
let $A_1,A_2,\dots,A_q$ be its stair-decomposition into blocks of size~$q$.
Then $\pi$ has an occurrence in $\tau_q$ in which the 
elements of $B_i$ are mapped into $A_i$ for every $i=1,2,\dots,q$. 
\end{lemma}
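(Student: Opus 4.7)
The plan is to assign, to each element $x$ of $\pi$ lying in block $B_i$, an index $\sigma(x)\in[q]$ so that sending $x$ to the $\sigma(x)$-th smallest element of $A_i$ produces an order-isomorphic occurrence of $\pi$ in $\tau_q$. First I would unpack exactly what $\sigma$ must satisfy. Since each $A_i$ is increasing, within-block order forces $\sigma$ to be strictly increasing on every $B_i$. Using the explicit alternation structure of the $q$-track---the $s$-th element of $A_{2j-1}$ sits at horizontal rank $2s-1$ and the $s$-th element of $A_{2j}$ at horizontal rank $2s$ inside $A_{2j-1}\cup A_{2j}$, with the analogous statement vertically inside $A_{2j}\cup A_{2j+1}$---the cross-block conditions become: (ii) for $x\in B_{2j-1}$ and $y\in B_{2j}$, $x$ is left of $y$ in $\pi$ iff $\sigma(x)\le\sigma(y)$; (iii) for $y\in B_{2j}$ and $z\in B_{2j+1}$, $y$ is below $z$ in $\pi$ iff $\sigma(y)\le\sigma(z)$. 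Blocks at distance at least two contribute no further constraints: in $\tau_q$ they are fully separated in both coordinates directly from the $q$-track structure, and in $\pi$ by iterating the stair-decomposition property that $B_{i+2}$ is above and to the right of $B_i$.

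Next I would encode these inequalities as a weighted digraph $D$ on the $k$ elements of $\pi$. The edges are: a weight-$1$ edge $x\to y$ whenever $x<y$ belong to the same block; for each pair of elements in consecutive blocks, a weight-$0$ edge in the ``forward'' direction dictated by (ii) or (iii) and a weight-$1$ edge in the reverse direction. A function $\sigma\colon[k]\to\mathbb{Z}$ realises conditions (i)--(iii) precisely when $\sigma(v)\ge\sigma(u)+w(u,v)$ holds for every edge $u\to v$ of weight $w(u,v)$.

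The crux of the argument is to show $D$ is acyclic. The observation that drives this is a simple bookkeeping: weight-$0$ edges always change the block index by $+1$, weight-$1$ inter-block edges by $-1$, and within-block edges by $0$ while still contributing weight $1$. Around any cycle the block-index changes must cancel, so the numbers of weight-$0$ and weight-$1$ inter-block edges coincide; writing this common count as $n$ and the number of within-block edges as $w$, the total weight around the cycle equals $n+w$. On the other hand, summing the defining inequalities around a cycle forces the total weight to be $0$, so $n=w=0$ and the cycle has no edges---contradicting its existence. Hence $D$ is a DAG, and since every path has distinct vertices, its longest weighted path uses at most $k-1$ edges and therefore has weight at most $k-1\le q-1$.

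Setting $\sigma(v):=1+(\text{weight of the longest weighted path ending at }v)$ then yields $\sigma\colon[k]\to[1,q]$ with $\sigma(v)\ge\sigma(u)+w(u,v)$ on every edge, so all of (i)--(iii) are satisfied. The image map $x\mapsto A_i[\sigma(x)]$ for $x\in B_i$ therefore respects within-block order by (i), the horizontal and vertical orders between consecutive blocks by (ii)--(iii), and the orders between non-adjacent blocks automatically by separation, so it provides the required occurrence of $\pi$ in $\tau_q$ in which each $B_i$ maps into $A_i$.
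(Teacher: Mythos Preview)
Your framework is sound and ultimately equivalent to the paper's: both arguments hinge on showing that the relations $<_i$ (the left-to-right order on $B_{2j-1}\cup B_{2j}$ and the bottom-to-top order on $B_{2j}\cup B_{2j+1}$) admit a common linear extension. Your digraph $D$ is precisely the union of these relations, and once $D$ is known to be acyclic, your longest-path definition of $\sigma$ works and the remaining verifications are correct.

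The gap is in your proof that $D$ is acyclic. You compute that any cycle has total weight $n+w$, and then write that ``summing the defining inequalities around a cycle forces the total weight to be~$0$.'' But the defining inequalities are $\sigma(v)\ge\sigma(u)+w(u,v)$, and these only make sense once $\sigma$ exists---which is exactly what you are trying to establish. The argument is circular: the existence of a potential $\sigma$ is equivalent to the absence of positive-weight cycles, so you cannot invoke $\sigma$ to rule them out. And since you have shown every cycle would have weight $n+w\ge 1$, this is precisely the obstruction you must overcome by other means.

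The paper fills this gap by an induction on~$m$: assuming $<_1\cup\dots\cup<_{m-2}$ has a linear extension (encoded by an injection $f$ into $\mathbb{R}$), one extends $f$ to $B_m$ using $<_{m-1}$; the point is that $<_{m-2}$ and $<_{m-1}$ agree on their overlap $B_{m-1}$ because $B_{m-1}$ is an increasing block. An equivalent direct argument in your language: take a hypothetical cycle and let $a$ be the minimum block index it visits. Each maximal excursion into $B_a$ enters from some $u\in B_{a+1}$ and exits to some $u'\in B_{a+1}$, and tracking the order $<_a$ along the excursion shows $u<_a u'$; since $<_a$ and $<_{a+1}$ agree on $B_{a+1}$ this gives a within-block edge $u\to u'$, so each excursion can be shortcut, producing a closed walk avoiding $B_a$ and hence a shorter cycle---contradiction by induction. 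Either of these replaces your circular step and completes your proof.
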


\begin{proof}
For $m=1$ the claim is trivial, so we assume that $m\ge 2$.

For $i \le \lfloor m/2 \rfloor$, let $<_{2i-1}$ be the left-to-right linear order of the elements from $B_{2i-1}\cup B_{2i}$ in $\pi$. Similarly, for $i \le \lceil m/2 \rceil-1$, let $<_{2i}$ be the bottom-to-top linear order of the elements from $B_{2i}\cup B_{2i+1}$ in $\pi$ (that is, $<_{2i}$ is the restriction of the standard linear order $<$ of the integers). We claim that there is a linear order $<_{\pi}$ on $[k]$ that simultaneously extends all the orders $<_1, <_2, \dots, <_{m-1}$. 
We show this by induction on $m$. For $m=2$ we can take $<_{\pi}$ as $<_1$. Now 
let $m\ge 3$ and assume that there is a linear order $<'$ extending the union 
$<_1 \cup <_2 \cup \dots \cup <_{m-2}$. This implies that there is an injective 
function $f\colon[k]\setminus B_m \rightarrow \mathbb{R}$ satisfying $a<'b 
\Leftrightarrow f(a)<f(b)$. Since $<_{m-2}$ and $<_{m-1}$ intersect at a linear 
order on $B_{m-1}$, which is a restriction of both $<$ and $<'$, we have $a 
<_{m-1} b \Leftrightarrow a<'b \Leftrightarrow f(a)<f(b)$ for every $a,b\in 
B_{m-1}$. Clearly, we can extend the function $f$ to the elements of $B_m$ so 
that $a <_{m-1} b \Leftrightarrow f(a)<f(b)$ for every $a,b\in B_{m-1}\cup B_m$. 
The order $<_{\pi}$ can now be defined as $a <_{\pi} b \Leftrightarrow 
f(a)<f(b)$ for all $a,b\in[k]$.

With the order $<_{\pi}$ at hand, we define an embedding of $\pi$ to the 
$q$-track as follows. Let $b_1,b_2, \dots, b_k$ be the permutation of $[k]$ 
satisfying $b_1 <_{\pi} b_2 <_{\pi} \dots <_{\pi} b_k$. Similarly, for every 
$i\in [q]$, let $a_{i,1},a_{i,2},\dots,a_{i,k}$ be the permutation of $A_i$ 
satisfying $a_{i,1} < a_{i,2} < \dots < a_{i,k}$. For every $j\in [k]$, let 
$i(j)$ be the index such that $b_j\in B_{i(j)}$. Then the map sending $b_j$ to 
$a_{i(j),j}$, for each $j\in [k]$, is an embedding of $\pi$ to the $q$-track.
\end{proof}

\subsection{\texorpdfstring{Patterns in $\Av(321)$ of large ``treewidth''}
{Patterns in Av(321) of large "treewidth"}}

Using the results from previous subsections, Theorem~\ref{veta_polynomialni} will follow from the following theorem.

\begin{theorem}\label{theorem_hledani_ktracku}
There is a function $g\colon\mathbb{N}\rightarrow \mathbb{N}$ satisfying $g(k) 
\le O(k^{3/2})$ with the following property. If $\pi\in\Av(321)$ is a 
permutation such that $G_{\pi}$ contains a $g(k)$-wall, then $\pi$ contains the 
$k$-track.
\end{theorem}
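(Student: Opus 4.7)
The plan is to use a $g(k)$-wall $W \subseteq G_\pi$ to extract combinatorial structure from the stair-decomposition of $\pi$ that is rich enough to embed the $k$-track. The key idea is that once $\pi \in \Av(321)$ is fixed, the graph $G_\pi$ inherits strong structural constraints from the stair-decomposition $B_1, B_2, \ldots, B_m$ guaranteed by Lemma~\ref{lemma_ex_stair_dec}, and these constraints propagate from the global complexity (treewidth, witnessed by $W$) of $G_\pi$ to concrete permutation-level data.

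First, I would analyze how the red (value-consecutive) and blue (position-consecutive) edges of $G_\pi$ interact with the blocks. Since each $B_i$ is an increasing subsequence in $\pi$, the edges of $G_\pi$ internal to a single $B_i$ form the union of a position-path and a value-path restricted to $B_i$, a graph of constant treewidth. Between two consecutive blocks $B_i$ and $B_{i+1}$, the bipartite graph of $G_\pi$-edges is controlled by how much the positions and values of $B_i$ interleave with those of $B_{i+1}$, i.e., by the length of the alternation between them; in particular, this bipartite graph has treewidth bounded linearly in the alternation length. Edges connecting blocks that are far apart in the stair-decomposition are sparse enough that they cannot alone carry the wall's complexity.

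Next, I would run a pigeonhole on $W$. A wall of height $r$ has $\Theta(r^2)$ vertices, is of maximum degree $3$, and has treewidth $\Omega(r)$. Using the constraints from Step~1, the wall cannot be accommodated by few blocks unless some pair of consecutive blocks has a very long alternation, and if the wall is spread across many blocks then by counting many of those blocks must contain many wall vertices each. Calibrating $g(k) = C k^{3/2}$ for a sufficiently large constant $C$, both alternatives deliver the same conclusion: one can identify $k$ blocks $B_{i_1}, B_{i_2}, \ldots, B_{i_k}$ (of the appropriate parities to respect the staircase geometry) such that each chosen block contains at least $k$ elements and the alternation between consecutive selected blocks has length at least~$k$. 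The exponent $3/2$ arises from balancing two pigeonhole parameters simultaneously: the wall's $\Theta(g(k)^2)$ vertices must be distributed so that $\Omega(k)$ blocks each receive $\Omega(k)$ vertices, while also supporting $\Omega(k)$-length alternations between those blocks.

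Finally, given such $k$ blocks with the required sizes and alternations, I would run the interleaving construction from the proof of Lemma~\ref{lemma_ktrack} in reverse: starting from the stair-decomposition $A_1, \ldots, A_k$ of $\tau_k$ with $|A_i| = k$ and the vertical/horizontal alternations linking consecutive $A_i$, greedily choose $k$ elements per selected block of $\pi$ respecting left-to-right order within a vertical alternation pair and bottom-to-top order within a horizontal alternation pair. The alternation bounds from Step~2 are precisely what is needed to carry out this selection consistently, yielding an embedding of $\tau_k$ into $\pi$.

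The main obstacle lies in Steps~1 and~2: quantifying how the wall's connectivity forces both many active blocks \emph{and} long alternations between them, rather than being absorbed by a few large blocks or diluted across many tiny ones. Getting the accounting tight enough for the $O(k^{3/2})$ bound (rather than a larger polynomial) requires carefully exploiting that the wall has maximum degree $3$ and that $G_\pi$ restricted to any single block or any single pair of consecutive blocks is essentially a low-treewidth bipartite structure whose complexity is measured by the alternation length.
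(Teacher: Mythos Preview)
Your proposal has a genuine gap in Steps~2 and~3: pairwise alternation lengths between consecutive blocks are not enough to force an embedding of the $k$-track, and the paper's actual mechanism is quite different.

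Concretely, suppose you succeed in finding consecutive blocks $B_a,\dotsc,B_{a+k-1}$ such that each has at least $k$ elements and each adjacent pair $B_{a+j-1},B_{a+j}$ has a long alternation. The $k$-track requires that the \emph{same} $k$ elements chosen in $B_{a+j}$ alternate with the chosen elements of $B_{a+j-1}$ (in one direction) and with those of $B_{a+j+1}$ (in the other). Nothing in your data prevents the long alternation between $B_{a+j-1}$ and $B_{a+j}$ from living among one set of elements of $B_{a+j}$ while the long alternation between $B_{a+j}$ and $B_{a+j+1}$ lives among a disjoint set; in that case the greedy selection of Step~3 fails. Lemma~\ref{lemma_ktrack} goes the other direction (embedding an arbitrary $321$-avoider into $\tau_k$) and does not help here.

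The paper supplies the missing coherence not by counting alternations but by finding many \emph{vertex-disjoint paths} in $G_\pi$ that traverse $B_a,\dotsc,B_{a+k-1}$ using only good edges. A single good path, as it passes through $B_{a+j}$, automatically links its element there to specific elements of both neighboring blocks, and the planarity of the good-edge subgraph (Lemma~\ref{lem-good-paths}) forces the paths to be globally ordered, which is exactly the coherent alternation you need. Producing these paths (Lemma~\ref{lem-graf}) is not a pigeonhole: one plants $\Theta(k)$ sub-$q$-walls of height $q=10k$ inside the big wall, uses the treewidth lower bound to argue each subwall meets blocks spanning an index interval of length $\ge k$, takes a median to split them into two halves, and then explicitly routes $\Theta(k)$ disjoint paths in the ambient wall between the halves. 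The $k^{3/2}$ comes from the product of subwall height $\Theta(k)$ and grid side $\Theta(\sqrt{k})$, not from distributing $\Theta(g(k)^2)$ vertices across blocks.
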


We will need the following lemma providing an upper bound on the treewidth of $G_{\rho}$ for $321$-avoiding permutations $\rho$.

\begin{lemma}\label{lemma_tw_blocks}
If $\rho\in\Av(321)$ has a stair-decomposition with $m$ blocks, then the 
treewidth of $G_{\rho}$ is at most $2m$. \end{lemma}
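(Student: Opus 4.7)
The plan is to construct an explicit path decomposition of $G_\rho$ of width at most $2m$. Given the stair-decomposition $B_1,\ldots,B_m$ of $\rho$, I process the vertices of $G_\rho$ in the order induced by the decomposition: first all elements of $B_1$ in increasing order of value (equivalently, of position, since each block is an increasing subsequence of $\rho$), then those of $B_2$ in the same manner, and so on. For each step $t$ of the sweep, the bag $X_t$ consists of the element processed at step $t$ together with every previously processed element that still has a not-yet-processed neighbor in $G_\rho$. A standard verification shows that $(X_t)_t$ is a valid path decomposition of $G_\rho$.

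The width bound rests on a structural fact about how edges of $G_\rho$ respect the stair-decomposition. Using the transitivity of the ``above and to the right'' relation together with the strict value- and position-inequalities in the definition of stair-decomposition, one shows that any edge of $G_\rho$ joins elements whose block-indices differ by a small constant: for a red edge $\{v,v+1\}$ with $v\in B_i$ and $v+1\in B_j$, $j>i$, the inequality $\min B_j > \max B_i$ (valid whenever $j \ge i+2$) forces $\min B_j = v+1$ and $\max B_i = v$, and an iteration of the same squeeze on $B_{i+2}$ rules out large gaps; the blue-edge case is entirely analogous, with positions in place of values. Consequently, once the sweep reaches block $B_i$, only elements of $B_i$ and of a bounded number of preceding blocks can remain active in $X_t$.

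The heart of the argument is a counting step showing that each block contributes at most two elements to $X_t$ at any one time. The key point is that $\max B_{k-1} < \min B_{k+1}$ (a consequence of $B_{k+1}$ being above and to the right of $B_{k-1}$), so essentially only the extremal (largest-value) element of $B_{k-1}$ can have an unprocessed red successor in $B_{k+1}$; an analogous position-based argument, using the strict position-separation between $B_{k-1}$ and $B_{k+1}$, controls the blue contribution. Summing these contributions across the $m$ blocks yields $|X_t|\le 2m+1$, and hence $\mathrm{tw}(G_\rho)\le 2m$.

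The main obstacle is the parity-sensitive case analysis that underlies the ``at most two active elements per block'' claim: the value- and position-constraints of the stair-decomposition alternate between the two parities (for instance, $B_{2s}$ is forced above $B_{2s-1}$ in value, but there is no direct value comparison between $B_{2s}$ and $B_{2s+1}$), so one has to treat each parity configuration separately to obtain the uniform bound. Once this bookkeeping is in place, the width estimate yields the claimed treewidth bound.
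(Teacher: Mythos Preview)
Your overall strategy---bound pathwidth via a linear sweep---matches the paper's, but the specific linear order you propose is fatal. Processing \emph{all} of $B_1$ before starting $B_2$ fails because elements of $B_1$ and $B_2$ can interleave arbitrarily in position (there is no horizontal constraint between $B_{2s-1}$ and $B_{2s}$, only the vertical one). Concretely, take $m=2$ with $B_1=\{1,\dots,n\}$, $B_2=\{n+1,\dots,2n\}$ and positions alternating, i.e.\ $\rho=1,\,n{+}1,\,2,\,n{+}2,\,\dots,\,n,\,2n$. Then every element of $B_1$ has a blue neighbour in $B_2$, so at the step where you begin $B_2$ all $n$ elements of $B_1$ are still ``active''; your bag has size $n+1$, while $2m+1=5$. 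Your ``at most two active elements per block'' claim is therefore false: the arguments you sketch control edges jumping \emph{over} a block (from $B_{k-1}$ to $B_{k+1}$), but say nothing about the many blue edges between $B_{2s-1}$ and $B_{2s}$ (or the many red edges between $B_{2s}$ and $B_{2s+1}$).

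The paper fixes exactly this by using a different linear order. It takes the order $<_\rho$ (from the proof of Lemma~\ref{lemma_ktrack}) that simultaneously extends the left-to-right order on each $B_{2i-1}\cup B_{2i}$ and the bottom-to-top order on each $B_{2i}\cup B_{2i+1}$; this order \emph{interleaves} adjacent blocks rather than processing them one after another. With respect to $<_\rho$, the blue Hamiltonian path decomposes into $\lceil m/2\rceil$ monotone subpaths (one per pair $B_{2i-1}\cup B_{2i}$) plus $\lceil m/2\rceil-1$ leftover edges, and the red path likewise into $1+\lfloor m/2\rfloor$ monotone subpaths plus $\lfloor m/2\rfloor$ leftover edges---a total of $2m$ monotone paths, each contributing at most one vertex to any separator, giving vertex separation (hence pathwidth, hence treewidth) at most $2m$. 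If you want to salvage your write-up, replace the block-by-block order by this interleaved order and replace the per-block counting by the monotone-path decomposition.
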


We show that, in fact, the pathwidth of $G_{\rho}$ is at most $2m$. 
Kinnersley~\cite{Kin92_vertex_sep} proved that the pathwidth of a graph $G$ is 
equal to the \emph{vertex separation number} of $G$, $\mathrm{vs(G)}$, defined 
as follows. If $G$ is a graph $G$ with $n$ vertices, $L=(v_1,v_2, \dots, v_n)$ 
is a linear ordering of the vertices and $i\in [n]$, let $V(G,L,i)$ be the set 
of vertices $v_j$ such that $j<i$ and $G$ has an edge $v_jv_k$ with $k\ge i$. 
Then let 
\[
\mathrm{vs}(G)=\min_{L}\max_i |V(G,L,i)|.
\]

\begin{proof}[Proof of Lemma~\ref{lemma_tw_blocks}]
Let $\rho\in\Av(321)$ be an $n$-permutation that has a stair-decomposition with 
blocks $B_1, B_2, \dots, B_m$. Let $<_{\rho}$ be the linear ordering of the 
elements of $\rho$ defined in the proof of Lemma~\ref{lemma_ktrack}. We claim 
that for each $i\in [n]$ we have $|V(G_{\rho},<_{\rho},i)|\le 2m$.

The graph $G_{\rho}$ is a union of two Hamiltonian paths, the \emph{blue} path 
$(\pi(1),\allowbreak \pi(2),\dots,\pi(n))$ formed by the blue edges and the 
\emph{red} path $(1,2,\dots,n)$ formed by the red edges. The blue path can be 
decomposed into $\lceil m/2 \rceil$ $<_{\rho}$-increasing paths induced by the 
subsets $B_{2j-1} \cup B_{2j}$ and $\lceil m/2 \rceil-1$ remaining blue edges. 
Similarly, the red path can be decomposed into $1+\lfloor m/2 \rfloor$ 
$<_{\rho}$-increasing paths induced by the subsets $B_{2j} \cup B_{2j+1}$ and 
$\lfloor m/2 \rfloor$ remaining red edges. All together, $G_{\rho}$ has a 
decomposition into $2m$ $<_{\rho}$-monotone paths, and each such path 
contributes at most one vertex to each $V(G_{\rho},<_{\rho},i)$.
\end{proof}

Let $\pi\in\Av(321)$ be a permutation with a stair-decomposition 
$B_1,\allowbreak B_2,\dots,\allowbreak B_m$. Let $xy$ be an edge of $G_{\pi}$ such that $x\in B_i$, 
$y\in B_j$ and $i\le j$. We say that $xy$ is \emph{good} if at least one of the 
following conditions is satisfied:
\begin{itemize}
\item $i=j$,
\item $xy$ is blue, $i$ is odd and $j=i+1$, or
\item $xy$ is red, $i$ is even and $j=i+1$.
\end{itemize}
Otherwise $xy$ is \emph{bad}.

We introduce the following notation: if $B_1,B_2,\dotsc,B_m$ is a sequence of 
sets and $i\in[m]$ an integer, then $B_{<i}$ denotes the set 
$\bigcup_{j=1}^{i-1} 
B_j$. The sets $B_{\le i}$, $B_{>i}$ and $B_{\ge i}$ are defined analogously.

\begin{lemma}\label{lemma_bad_edges}
Let $\pi\in\Av(321)$ be a permutation with a stair-decomposition $B_1,B_2,\dots,B_m$. Then 
\begin{enumerate}
\item[$1)$] $G_{\pi}$ has at most $m-1$ bad edges,
\item[$2)$] for each $i\in [m]$ there is at most one vertex $x\in B_i$ incident
to a bad edge $xy$ with $y\in B_{>i}$, and  
\item[$3)$] for each $i\in [m]$ there is at most one vertex $x\in B_i$ incident 
to a bad edge $yx$ with $y\in B_{<i}$.
\end{enumerate}
\end{lemma}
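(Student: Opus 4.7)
Proof proposal.

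I plan to establish (2) and (3) first by a local structural analysis of each block, and then derive (1) by decomposing the two Hamilton paths of $G_\pi$ into pieces that stay within ``pair-blocks.''

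For (2) and (3), my target is the sharper statement: if $xy$ is a bad edge with $x\in B_i$ and $y\in B_j$, $i<j$, then $x$ is the maximum element of $B_i$ (equivalently, since $B_i$ is increasing, the element of largest position) and $y$ is the minimum element of $B_j$. I split on the color of the edge. For a bad red edge $\{v,v+1\}$ with $v\in B_i$ and $v+1\in B_j$, I argue that if $v<\max B_i$, some $v''\in B_i$ has $v''\ge v+2$ (because $v+1\notin B_i$); then the presence of the value $v+1$ in $B_j$ strictly below $v''\in B_i$ is incompatible with the axioms of a stair-decomposition except when $(i,j)$ is a horizontal pair $(2k,2k+1)$, in which case the red edge is good, a contradiction. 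The sub-case $v\in B_j$, $v+1\in B_i$ is handled symmetrically, yielding $v+1=\min B_i$. The bad-blue argument is the positional transpose, with horizontal pairs replaced by vertical pairs $(2k-1,2k)$, using that positions of $B_i$ and $B_j$ can overlap only for such a vertical pair (where the blue edge would then be good).

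For (1), I partition the blocks into \emph{position-pairs} $P_k=B_{2k-1}\cup B_{2k}$ (for $k=1,\dots,\lceil m/2\rceil$) and \emph{value-pairs} $V_k=B_{2k}\cup B_{2k+1}$ (for $k=0,\dots,\lfloor m/2\rfloor$, adopting $B_0=B_{m+1}=\emptyset$). Using the fourth stair-axiom together with the extremality from (2) and (3), I show that the blue Hamilton path $(\pi(1),\dots,\pi(n))$ passes through the position-pairs in the order $P_1,P_2,\dots,P_{\lceil m/2\rceil}$: every blue edge that stays inside a single $P_k$ is good (either intra-block or a vertical-pair edge between $B_{2k-1}$ and $B_{2k}$), while each pair of consecutive $P_k$'s is separated by exactly one blue edge, which must be bad. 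This contributes $\lceil m/2\rceil-1$ bad blue edges. Dually, the red Hamilton path $(1,2,\dots,n)$ traverses the value-pairs in order, contributing $\lfloor m/2\rfloor$ bad red transitions between consecutive $V_k$'s; every other red edge is good. Summing,
\[
\#\text{(bad edges)}\le(\lceil m/2\rceil-1)+\lfloor m/2\rfloor=m-1.
\]

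The main technical hurdle is justifying the ordering of the Hamilton paths over pair-blocks, because the fourth axiom directly relates $B_{i+2}$ to $B_i$ but not $B_{i+3}$ to $B_i$. Carrying the argument through requires combining the axioms with the extremality already proved in (2), (3) to rule out configurations in which a blue or red transition ``jumps over'' a pair-block: any such jump would either force two distinct vertices of some $B_i$ to be incident to bad edges of the same direction, contradicting (2) or (3), or else produce an intra-pair edge forbidden by the monotonicity guaranteed by the stair-axioms. This bootstrapping step is where the bulk of the case analysis will live.
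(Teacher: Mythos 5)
Your proposal follows essentially the same route as the paper's proof: parts (2) and (3) via the observation that the lower endpoint of a bad edge going upward must be the topmost (equivalently rightmost) element of its block, and part (1) by charging bad blue edges to transitions between consecutive position-pairs $B_{2i-1}\cup B_{2i}$ and bad red edges to transitions between consecutive value-pairs $B_{2i}\cup B_{2i+1}$, summing to $(\lceil m/2\rceil-1)+\lfloor m/2\rfloor=m-1$. The extra claim you prove along the way (the upper endpoint is the minimum of its block) is true but not needed.

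The step you flag as the ``main technical hurdle''---that the Hamiltonian paths traverse the pair-blocks in order, i.e.\ that $B_{2k+2}$ is to the right of $B_{2k-1}$ and $B_{2k+1}$ is above $B_{2k-2}$---is a genuine gap, and your proposed bootstrap from (2) and (3) cannot close it, because this separation simply does not follow from the four axioms of Definition of a stair-decomposition. Concretely, take $\pi=2413$ with the decomposition $B_1=\{1\}$, $B_2=\{2\}$, $B_3=\{3\}$, $B_4=\{4\}$ (labels are values): all axioms are satisfied, parts (2) and (3) hold trivially since every block is a singleton, yet five of the six edges of $G_\pi$ are bad while $m-1=3$. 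So no amount of case analysis from the axioms alone will finish part (1). You should not feel too bad about this: the paper's own proof silently assumes the same separation (``every bad blue edge starts at the rightmost vertex of some pair of blocks $B_{2i-1},B_{2i}$''), which fails for the decomposition above. The fix is to work only with stair-decompositions satisfying the additional property that $B_j$ is above and to the right of $B_i$ for all $j\ge i+2$ --- this holds for the canonical decomposition produced in Lemma~\ref{lemma_ex_stair_dec}, which is the one used downstream --- after which your pair-block ordering is immediate and the rest of your argument (and the paper's) goes through.
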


\begin{proof}
If $xy$ is a bad edge with $x\in B_i$ and $y\in B_{>i}$, then $x$ must be the 
topmost (or equivalently rightmost) vertex of~$B_i$. This shows part 2), and 
part 3) is analogous.

For part 1), we observe that every bad blue edge starts at the rightmost vertex 
of some pair of blocks $B_{2i-1}, B_{2i}$, and for each such pair with $2i<m$ 
there is at most one such edge. Similarly, every bad red edge starts at the 
topmost vertex of some pair of blocks $B_{2i}, B_{2i+1}$, and for each such 
pair with $2i+1<m$ there is at most one such edge.
\end{proof}

The following lemma is the first step towards the proof of Theorem~\ref{theorem_hledani_ktracku}. The goal is to find ``many'' vertex-disjoint paths in $G_{\pi}$ between two ``distant'' blocks of the stair-decomposition.

\begin{lemma}\label{lem-graf} 
Let $k$ be an integer. Let $G$ be a graph whose vertex set is partitioned into 
a sequence of (possibly empty) sets $B_1, B_2,\dotsc,B_m$, such that for 
every $i\in[m-k+1]$ the subgraph of $G$ induced by the set 
$\bigcup_{j=i}^{i+k-1} B_j$ has treewidth less than~$10k$. If $G$ contains an 
$r$-wall for some $r\ge 300k^{3/2}$, then there is an integer $b\le m-k$ such 
that $G$ contains $10k$ vertex-disjoint paths connecting the set $B_{\le b}$ to 
the set $B_{\ge b+k}$.
\end{lemma}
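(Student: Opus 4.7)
The plan is to argue by contradiction via Menger's theorem combined with the structural rigidity of walls. Suppose that for no $b\in\{1,\dots,m-k\}$ does $G$ contain $10k$ vertex-disjoint paths from $B_{\le b}$ to $B_{\ge b+k}$; by Menger's theorem, every such $b$ admits a vertex separator $S_b\subseteq V(G)$ of size at most $10k-1$ that separates $B_{\le b}$ from $B_{\ge b+k}$ in $G$. Let $W$ be the given $r$-wall, let $f(v)$ denote the block index of $v\in V(W)$, and set $\alpha=\min f$, $\beta=\max f$, $d=\beta-\alpha$.

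The first step is to observe that $d\ge k$: if $d<k$ then $V(W)\subseteq\bigcup_{j=\alpha}^{\alpha+k-1}B_j$, whose induced subgraph has treewidth less than $10k$ by hypothesis, forcing $\mathrm{tw}(W)<10k$ and contradicting the well-known fact that an $r$-wall has treewidth at least $r\ge 300k^{3/2}>10k$.

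Next I would work with the $r$ vertex-disjoint horizontal paths $H_1,\dots,H_r$ of $W$, each running between the leftmost and rightmost columns. For each $j$ set $a_j=\min_{v\in H_j}f(v)$ and $b_j=\max_{v\in H_j}f(v)$, and say $H_j$ \emph{crosses} a position $b$ if $a_j\le b\le b_j-k$; any crossing path contains a subpath from $B_{\le b}$ to $B_{\ge b+k}$. If some $b^*$ is crossed by at least $10k$ of the $H_j$'s, then vertex-disjointness yields $10k$ vertex-disjoint paths in $G$ from $B_{\le b^*}$ to $B_{\ge b^*+k}$, contradicting the standing assumption. The same argument applies to the $r$ vertical paths of $W$. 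In the remaining case no position is crossed by $10k$ horizontal paths and none by $10k$ vertical paths. Double-counting crossings over all positions yields $\sum_j\max(0,b_j-a_j-k+1)<10k(d+1)$ for horizontals and analogously for verticals, so most horizontal and most vertical paths lie inside some $k$-window. Since every horizontal path meets every vertical path in $W$, their block-index ranges must pairwise overlap, which squeezes the entire wall into a band of only $O(k)$ consecutive blocks. A pigeonhole over the $O(k)$ candidate $k$-windows inside this band then places $\Omega(r/k)=\Omega(\sqrt{k})$ entire horizontal paths and comparably many vertical paths inside a common $k$-window $\mathcal{W}$. These interlock into a sub-wall of $W$ of height exceeding $10k$ whose vertex set lies in $\mathcal{W}$, contradicting the treewidth hypothesis on the induced subgraph $G[\bigcup_{i\in\mathcal{W}}B_i]$.

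The hardest step is this last one. Subdivision vertices of $W$ may fall outside $\mathcal{W}$ even when the adjacent branch vertices lie inside it, so the vertical segments connecting the retained horizontal paths need not automatically stay in $\mathcal{W}$; the window and the extracted rows and columns must be chosen carefully and the escaping subdivisions handled by a finer analysis of the wall's structure. The exponent $3/2$ in $r\ge 300k^{3/2}$ is calibrated precisely so that after both pigeonhole losses ---~one over the $O(k)$ candidate $k$-windows and one over the row/column structure of $W$~--- enough material survives inside $\mathcal{W}$ to produce a sub-wall tall enough to force the treewidth contradiction.
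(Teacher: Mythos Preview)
Your approach is genuinely different from the paper's, but it contains a real gap that is not merely a matter of missing detail.

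The double-counting step does not yield what you claim. From
\[
\sum_{j}\max(0,\,b_j-a_j-k+1)\;<\;10k(d+1)
\]
you conclude that ``most horizontal and most vertical paths lie inside some $k$-window''. But the right-hand side involves the span $d=\beta-\alpha$ of the wall in block-index space, and nothing in the hypotheses bounds $d$ in terms of $k$ or $r$; indeed $d$ can be as large as~$m$. The number of ``long'' horizontal paths (those with $b_j-a_j\ge k$, each contributing at least $1$ to the left-hand side) is therefore bounded only by $10k(d+1)$, which may well exceed~$r$. Without an a~priori bound on $d$, you cannot conclude that any definite fraction of the horizontal or vertical paths is short, and the subsequent ``squeezing into an $O(k)$-band'' step never gets started. (Incidentally, the Menger separators $S_b$ that you set up are never used afterwards; the argument runs entirely on the assumption ``fewer than $10k$ horizontal paths cross each~$b$''.)

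The second gap is the one you yourself flag: even if many short horizontal and vertical paths land in a common $k$-window~$\mathcal{W}$, you have not shown how to assemble them into a sub-wall of height exceeding $10k$ whose \emph{subdivision vertices} also lie in~$\mathcal{W}$. This is not a technicality; it is the crux, and the proposal gives no mechanism for it.

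For comparison, the paper sidesteps both issues by a direct construction. It tiles $W$ with $s^2\approx 36k$ well-separated $10k$-subwalls $W_{i,j}$; each has treewidth at least $10k$, so by the hypothesis its block-index range has length at least~$k$. A median-of-medians over the midpoints of these ranges produces a single window $[c,d]$ with $d-c=k$ such that half of the subwalls contain a vertex in $B_{\le c}$ and the other half a vertex in $B_{\ge d}$. The paper then routes $s^2/2>10k$ vertex-disjoint paths in $W$ explicitly between matched pairs of such vertices, using the spacing between the subwalls and a margin near the boundary of~$W$. There is no Menger, no contradiction, no need to bound~$d$, and no sub-wall extraction inside a window.
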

\begin{proof}
Define $q=10k$ and $s=6\big\lceil \sqrt{k} \,\big\rceil$. We will show that $G$ 
contains $s^2/2>10k$ vertex-disjoint paths satisfying the properties of the 
lemma. Let $W$ be an $r$-wall in~$G$. 

For each $i,j\in [s]$, define a $q$-wall $W_{i,j}$ as a subgraph of $W$, so that 
the walls $W_{i,j}$ are arranged roughly in an $s\times s$ lattice, every pair 
of these $q$-walls is separated by at least $2s+2$ bricks of $W$, and each 
$W_{i,j}$ is separated by at least $s^2+1$ bricks from the boundary of $W$. This 
is possible as 
$qs + (2s+1)(s-1) + 2s^2+2 \le qs +4s^2<r$.

Let $I_{i,j}$ be the set of indices $l$ such that $W_{i,j}$ intersects~$B_l$.
Since every $q$-wall contains the $q\times q$ grid as a minor, its treewidth is 
at least $q=10k$. By the assumption, the $q$-wall is not contained in any union of 
$k$ consecutive blocks~$B_l$, and in particular, the minimum and the 
maximum of $I_{i,j}$ differ by at least~$k$.

Let $M_{i,j}$ be the median of $I_{i,j}$. Let $M$ be the median of the multiset 
$\{M_{i,j}; i,j\in[s]\}$. Let $c=M - k/2$ and 
$d=M + k/2$. The numbers $c,d$ are chosen so that
every set $I_{i,j}$ with $M_{i,j}\le M$ contains a number $c_{i,j}\le c$ and 
every set $I_{i,j}$ with $M_{i,j}\ge M$ contains a number $d_{i,j}\ge d$.
Let $C,D\subseteq [s]^2$ be sets of size $s^2/2$ forming a partition of $[s]^2$ 
such that $M_{i,j}\le M$ if $(i,j)\in C$ and $M_{i,j}\ge M$ if $(i,j)\in D$. 
From every $q$-wall $W_{i,j}$ such that $(i,j)\in C$, 
choose a vertex $w_{i,j}\in B_{c_{i,j}}\cap W_{i,j}$. Similarly, from every 
$q$-wall $W_{i,j}$ such that $(i,j)\in D$, choose a vertex $w'_{i,j}\in 
B_{d_{i,j}}\cap W_{i,j}$. 

Let $w_1,w_2,\dots,w_{s^2/2}$ be a relabeling of the vertices $w_{i,j}$ 
corresponding to the lexicographic order of the pairs $(j,i)$.  
We claim that there is a relabeling $w'_1,w'_2,\dots,w'_{s^2/2}$ of the vertices 
$w'_{i,j}$ and $s^2/2$ vertex-disjoint paths $P_1, P_2, \dots, P_{s^2/2}$ in $W$ 
where each $P_t$ starts at $w_t$ and ends at $w'_t$. 
We note that with a stronger assumption on the distances between the subwalls 
$W_{i,j}$, a more general result by Robertson and 
Seymour~\cite{RS88_minorsVII_paths} would imply the existence of such paths for 
any relabeling of the vertices $w'_{i,j}$.

We choose the relabeling $w'_1,w'_2,\dots,w'_{s^2/2}$ of the vertices 
$w'_{i,j}$ 
so that it again corresponds to the lexicographic order of the pairs $(j,i)$.

We construct the paths as follows; see Figure~\ref{obr_wallpath}. Let $t\in 
[s^2/2]$ and let $i,j,i',j'\in [s]$ be such that $w_t=w_{i,j}$ and 
$w'_t=w_{i',j'}$. The path $P_t$ is composed from nine horizontal or nearly 
vertical subpaths $P_{t,1},P_{t,2},\dots,P_{t,9}$. Here by a \emph{nearly 
vertical} path in $W$ going upwards (downwards) we mean a path consisting of 
subpaths between branching vertices of $W$ with internal vertices of degree $2$, 
where the directions of the subpaths periodically alternate as up, right, up, 
left (down, right, down, left, respectively). The path $P_{t,1}$ starts at $w_t$ 
and goes up or to the right to the closest branching vertex in $W$. The path 
$P_{t,2}$ continues directly to the right to the $(2i)$th branching vertex 
outside of $W_{i,j}$. The path $P_{t,3}$ is nearly vertical continuing upwards 
to the $(2t)$th row of $W$, and it is followed by a horizontal path $P_{t,4}$ 
ending at the $(2t)$th branching vertex from the right of $W$. Then $P_{t,5}$ is 
nearly vertical 
continuing downwards, to the $(2t)$th 
row from the bottom of $W$. 

\begin{figure}
\begin{center}
\includegraphics{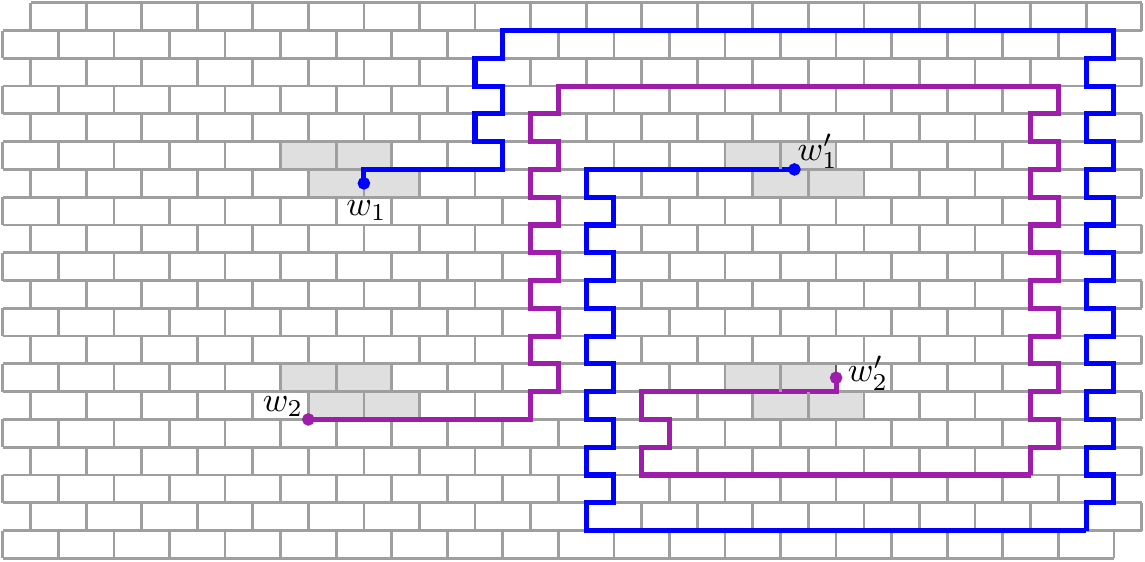}
\end{center}
\caption{Paths $P_1$ and $P_2$ for $q,s=2$.}
\label{obr_wallpath}
\end{figure}

The remaining subpaths of $P_t$ are defined symmetrically. Starting at $w'_t$, 
the path $P_{t,9}$ goes down or to the left to the closest branching vertex, 
followed by $P_{t,8}$ that goes directly to the left to the $2(s+1-i')$th 
branching vertex outside of $W_{i',j'}$. Then $P_{t,7}$ is nearly vertical 
continuing downwards to the $(2t)$th row of $W$ from the bottom, and $P_{t,6}$ 
continues directly to the right to the $(2t)$th branching vertex from the right, 
which is also the endpoint of $P_{t,5}$. 

Since all the paths connect the set $B_{\le c}$ to the set $B_{\ge d}$ with 
$d\ge c+k$, the lemma follows.
\end{proof}

The next lemma shows how to find the $k$-track in $G_\pi$, given many vertex disjoint paths between two distant blocks of the stair-decomposition.

\begin{lemma}\label{lem-good-paths}
Let $\pi\in\Av(321)$ be a permutation with a stair-decomposition 
$B_1,B_2,\dots,B_m$. Suppose that there is an odd index $a\in[m-k+1]$ such that 
the subgraph of $G_\pi$ induced by $\bigcup_{j=a}^{a+k-1} B_j$ contains $3k-2$ vertex-disjoint paths $Q_1,Q_2, \dots, Q_{3k-2}$, each of them connecting a vertex in $B_a$ to a vertex in 
$B_{a+k-1}$, and each of them containing only good edges. Then $\pi$ contains the $k$-track 
$\tau_k$, and moreover, it has an occurrence of $\tau_k$ in which the elements 
from the $j$th block of $\tau_k$ are mapped into $B_{a+j-1}$.
\end{lemma}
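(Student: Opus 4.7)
The plan is to build an occurrence of $\tau_k$ inside $\pi$ out of the paths $Q_\ell$. Broadly, for each path $Q_\ell$ and each $j \in [k]$ I will designate a canonical vertex $w_\ell^j \in V(Q_\ell) \cap B_{a+j-1}$, and then choose $k$ of the $3k-2$ paths whose canonical vertices together form an occurrence of the $k$-track with the $j$th block of $\tau_k$ mapped into $B_{a+j-1}$.

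I would first observe that each path visits every one of the $k$ blocks. Indeed, every good edge either stays inside a single block $B_i$ or connects two consecutive blocks $B_i, B_{i+1}$, so a good walk from $B_a$ to $B_{a+k-1}$ must pass through each intermediate block. By replacing $Q_\ell$ with a suitable subpath if needed, I may further assume that each $Q_\ell$ is \emph{monotone}: it encounters the blocks in order $B_a, B_{a+1}, \dots, B_{a+k-1}$ and between each consecutive pair uses exactly one inter-block good edge, called its $j$th \emph{transition edge}. By the definition of a good edge, this transition edge is blue (a pair of positions differing by one) when $a+j-1$ is odd, and red (a pair of values differing by one) when $a+j-1$ is even.

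I would then exploit the resulting rigidity. Any two vertex-disjoint good blue edges $(x,y), (x',y')$ between $B_i$ and $B_{i+1}$ with $x, x' \in B_i$ in increasing position order force the four positions to appear in the order $x, y, x', y'$, because the position of $y$ equals that of $x$ plus one and thus $y$ is squeezed between $x$ and $x'$. This is precisely a vertical alternation of length four; analogously, two disjoint good red edges yield a horizontal alternation of length four. Hence, for each single transition $j$, the $3k-2$ disjoint transition edges used by the monotone paths automatically assemble an alternation of the correct type and length $2(3k-2)$, and any $k$ of them suffice for the $k$-track alternation at that transition.

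The technical heart of the argument is to extract $k$ paths whose canonical vertices simultaneously respect the alternations at all $k-1$ transitions. The complication is that a monotone path $Q_\ell$ may visit two distinct vertices of an intermediate block $B_{a+j-1}$: the entry endpoint $b_\ell^{j-1}$ of the transition from $B_{a+j-2}$ and the exit endpoint $a_\ell^j$ of the transition to $B_{a+j}$, whereas a $k$-track embedding selects just one vertex per path per block. I plan to process the $k-1$ transitions in order $j = 1, \dots, k-1$, and at each step discard at most two of the surviving paths to eliminate any inversion that would break compatibility with the alternations already fixed. The budget $3k-2 = k + 2(k-1)$ then guarantees that at least $k$ paths survive after all $k-1$ steps, producing the required copy of $\tau_k$. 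This reconciliation step is the main obstacle; it will require a careful argument exploiting the blue/red rigidity of good edges together with the fact that within an increasing block the position order and the value order coincide.
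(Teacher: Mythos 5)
The key claim that ``the transition edges automatically assemble an alternation'' is not correct as stated: a good blue edge between $B_i$ and $B_{i+1}$ with $x\in B_i$ and $y\in B_{i+1}$ only guarantees $|\mathrm{pos}(y)-\mathrm{pos}(x)|=1$, not $\mathrm{pos}(y)=\mathrm{pos}(x)+1$. The blocks $B_i$ and $B_{i+1}$ may interleave in position, so $y$ can sit to the left of $x$. If, say, $(x_1,y_1)$ has $y_1$ one step left of $x_1$ while $x_2 = x_1+1$ (an allowed in-block adjacency) forces $y_2=x_2+1$, then the four endpoints occupy positions in the order $y_1,x_1,x_2,y_2$, giving the pattern $3124$ rather than the alternation $1324$. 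So even the base step of your argument fails without an additional structural input.

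This is exactly where the paper's proof diverges. Rather than selecting $k$ paths and a single canonical vertex per path per block --- which runs into the entry/exit reconciliation problem you correctly identify as the crux, and which is not repaired by discarding two paths per transition, since the entry and exit of a path in a middle block can differ for \emph{every} surviving path --- the paper staggers: it sets $v_{i,j}$ to be the \emph{last} vertex of $Q_{2(i-1)+j}$ in $B_{a+j-1}$. Consecutive blocks $j$ and $j+1$ then draw their vertices from paths of opposite parity, so no single path ever contributes vertices to two adjacent blocks and the reconciliation problem simply does not arise. This is precisely why $3k-2$ paths are needed (and why the count is $3k-2 = 2(k-1)+k$ rather than an artifact of a discarding budget). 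The alternation is then proved using the adjacency of the transition edge together with a planarity/non-crossing argument (all vertices of $Q_t$ in $B_{a+1}$ lie to one side of all vertices of $Q_{t+1}$ in $B_{a+1}$), established via a planar drawing of the good-edge subgraph; this non-crossing statement is the global fact your proposal needs but does not supply. As written, your proposal therefore has two genuine gaps: the false local alternation claim for mixed-direction transition edges, and the unresolved reconciliation step.
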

\begin{proof}
Truncating the paths $Q_i$ if necessary, we may assume that 
each of them has exactly one vertex in $B_a$ and exactly one vertex in $B_{a+k-1}$. Suppose 
without loss of generality that the paths are numbered in such a way that the 
endpoint of $Q_t$ in $B_a$ is to the left of the endpoint of $Q_{t+1}$ in $B_a$
for every~$t\in[3k-3]$.

For every $i,j\in[k]$, let $v_{i,j}$ be the last vertex on
$Q_{2(i-1)+j}$ that lies in $B_{a-1+j}$. Let $\sigma$ be the pattern formed by 
these $k^2$ elements $v_{i,j}$ in $\pi$. We claim that $\sigma$ is the 
$k$-track. Clearly, the sets $B_a, B_{a+1}, \dots, B_{a+k-1}$ induce a stair 
decomposition of $\sigma$, with each block having exactly $k$ elements. It 
remains to verify that consecutive blocks induce vertical or horizontal 
alternations in~$\sigma$. More precisely, we want to show that for each $j\in 
[k-1]$, we have 
\begin{equation}\label{eq_alternation}
v_{1,j}<_{j}v_{1,j+1}<_{j}v_{2,j}<_{j}v_{2,j+1}<_{j} \dots 
<_{j}v_{k,j}<_{j}v_{k,j+1}
\end{equation}
where $<_j$ is the horizontal left-to-right order on $B_{a-1+j}\cup B_{a+j}$ for 
$j$ odd and the vertical bottom-to-top order on $B_{a-1+j}\cup B_{a+j}$ for $j$ 
even.

We show \eqref{eq_alternation} only for $j=1$ since the other cases are 
analogous and follow by induction on $j$, using the induction assumption that 
$v_{1,j},v_{2,j},\dots,v_{k,j}$ is an increasing subsequence in $\pi$. 

By symmetry, it is sufficient to prove that $v_{1,1}<_{1}v_{1,2}$; in other 
words, that $v_{1,1}$ is to the left of $v_{1,2}$ in $\pi$. This will follow 
from the fact that all vertices of $Q_1$ in $B_{a+1}$ are to the left of all 
vertices of $Q_2$ in $B_{a+1}$. Indeed, the neighbor $v'$ of $v_{1,1}$ in $Q_1$ 
is at a position adjacent to $v_{1,1}$ in $\pi$, so every element in $B_{a+1}$ 
that is to the right of $v'$ is also to the right of $v_{1,1}$.

Let $G$ be the subgraph of $G_{\pi}$ consisting of the vertices $B_a \cup 
B_{a+1} \cup \dots \cup B_{a+k-1}$ and all the good edges among these vertices. 
Clearly, $G$ contains all the paths $Q_1,Q_2, \dots, Q_{3k-2}$. We observe that 
$G$ has the following planar drawing; see Figure~\ref{obr_planar}. For each 
$j\in [k]$, draw the vertices of $B_{a-1+j}$ on the vertical line with 
$x$-coordinate $j$, ordered from bottom to top according to the order $<_{j}$. 
Then draw all the edges as straight-line segments.

\begin{figure}
\begin{center}
\includegraphics{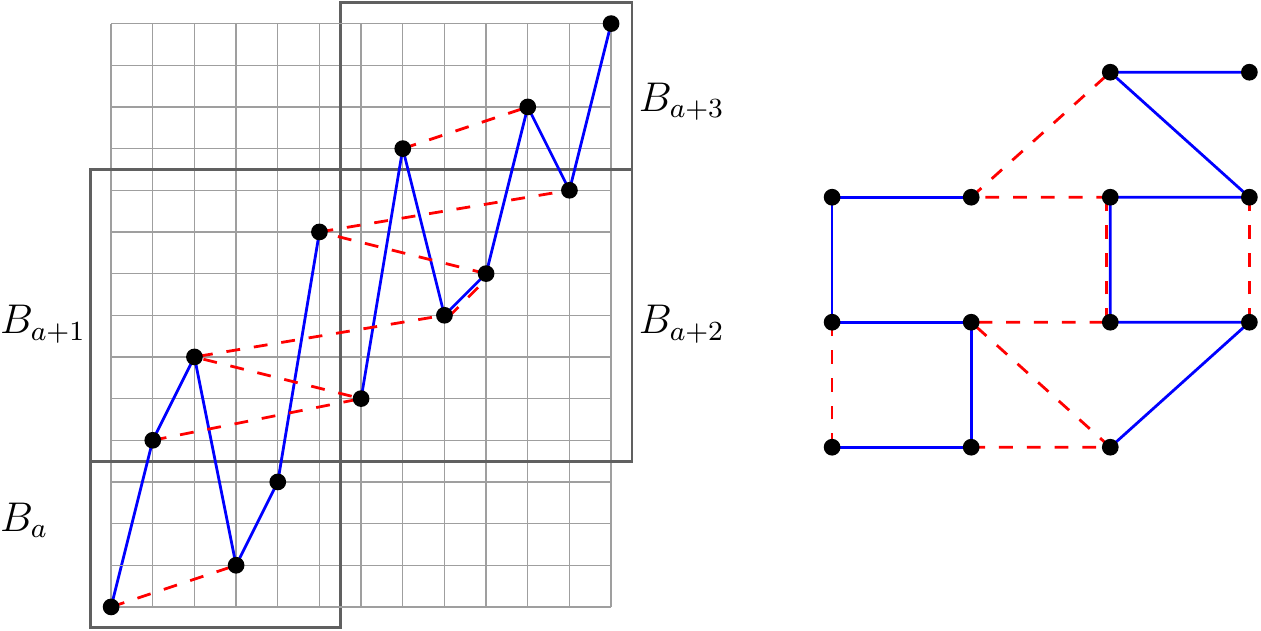}
\end{center}
\caption{Left: a subgraph $G$ of $G_{\pi}$ consisting of good edges. Right: a 
planar drawing of $G$.}
\label{obr_planar}
\end{figure}

Let $\Omega=\{(x,y)\in \mathbb{R}^2; 1\le x \le k\}$, which is a convex region 
containing all the vertices and edges of the drawing.
All the vertices of $B_{a}$ and $B_{a+k-1}$ are on the boundary of $\Omega$.
Each of the paths $Q_t$ connects a vertex in $B_{a}$ with a vertex in 
$B_{a+k-1}$, and all the inner vertices of $Q_t$ are in $B_{a+1} \cup B_{a+2} \cup \dots \cup B_{a+k-2}$. 
Therefore, each path $Q_t$ divides $\Omega$ into two regions: the region 
\emph{below} $Q_t$ and the region \emph{above} $Q_t$. In particular, $Q_2$ is 
contained in the region above $Q_1$.

\begin{figure}
\begin{center}
\includegraphics{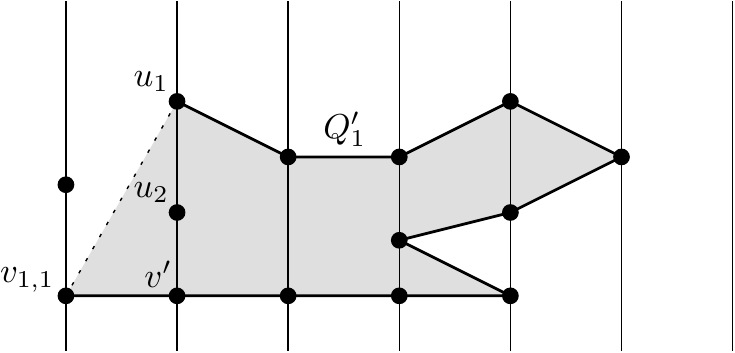}
\end{center}
\caption{If $u_2$ is below $u_1$, then $Q_2$ is forced to return to the left 
boundary of $\Omega$.}
\label{obr_Q1_Q2}
\end{figure}

Suppose for contradiction that there are vertices $u_1\in Q_1 \cup B_{a+1}$ and 
$u_2\in Q_2 \cup B_{a+1}$ such that $u_2$ is below $u_1$ on the line with 
$x$-coordinate $2$; see Figure~\ref{obr_Q1_Q2}.
Let $Q'_1$ be the subpath of $Q_1$ between $v_{1,1}$ and $u_1$.  Since $u_2$ is 
in the region above $Q_1$, the vertex $v'$, which is the neighbor of $v_{1,1}$ 
on $Q_1$, is below $u_2$. Consequently, $u_2$ is inside the bounded region 
formed by $Q'_1$ and the segment $u_1v_{1,1}$. Since $Q_1$ and $Q_2$ do not 
cross, it follows that $Q_2$ crosses the segment $u_1v_{1,1}$ twice. But this 
means that there are at least two vertices on $Q_2$ in $B_a$, which is a 
contradiction.
\end{proof}

\begin{proof}[Proof of Theorem~\ref{theorem_hledani_ktracku}]
We shall prove that the theorem holds for the function $g(k)=300k^{3/2}$. 
Suppose that $\pi\in\Av(321)$ is a permutation such that $G_\pi$ contains a 
$g(k)$-wall. Let $B_1,\allowbreak B_2,\dots,\allowbreak B_m$ be a stair-decomposition of~$\pi$. By 
Lemma~\ref{lemma_tw_blocks}, the subgraph of $G_\pi$ induced by any $k$ 
consecutive blocks of the decomposition has treewidth at most~$2k$. We may 
therefore apply Lemma~\ref{lem-graf} to the graph $G_\pi$ and the sets 
$B_1,B_2,\dots,B_m$, to conclude that there is an index $b\in[m-k]$ and a set of 
vertex-disjoint paths $P_1, P_2,\dots,P_{10k}$, each connecting $B_{\le b}$ to 
$B_{\ge b+k}$.

Let $a$ be the smallest odd integer greater than or equal to~$b$. Note that 
$G_\pi$ has at most two edges connecting a vertex in $B_{<a}$ to a vertex in 
$B_{>a}$, since $B_{>a}$ is above and to the right of~$B_{<a}$. By the same 
argument, there are at most two edges between $B_{<a+k-1}$ and $B_{>a+k-1}$.
We remove from  $P_1, P_2,\dots,P_{10k}$ all the paths using any of these  
edges, and observe that each of the remaining paths contains a vertex from $B_a$ 
as well as from $B_{a+k-1}$. Truncating these paths if necessary, we may assume 
that all their vertices are in the set $\bigcup_{j=a}^{a+k-1}B_j$. By 
Lemma~\ref{lemma_bad_edges}, this set induces at most $k-1$ bad edges in 
$G_\pi$. Removing from our set of paths any path containing a bad edge, we are 
left with at least $10k-4-k+1>3k-2$ paths from $B_a$ to $B_{a+k-1}$ which only 
contain good edges. By Lemma~\ref{lem-good-paths}, $\pi$ contains the $k$-track, 
as claimed.
\end{proof}

\subsection{Proof of Theorem~\ref{veta_polynomialni}}
Let $\cC$ be a proper subclass of $\C{2}{0}$. Let $s$ be the smallest positive integer such that there is a $321$-avoiding $s$-permutation that is not contained in $\cC$. By Lemma~\ref{lemma_ktrack}, the $s$-track is not contained in $\cC$ either. By Theorem~\ref{theorem_hledani_ktracku}, for some function $g(s)\le O(s^{3/2})$, there is no permutation $\pi\in\cC$ such that $G_{\pi}$ contains a $g(s)$-wall. Consequently, no such graph $G_{\pi}$ contains the $(2g(s)+2) \times (2g(s)+2)$ grid as a minor. By Theorem~\ref{veta_grid}, for every permutation $\pi\in\cC$, the treewidth of the graph $G_{\pi}$ is at most $f(2g(s)+2))\le s^{28.5}(\log{s})^{O(1)}$. Finally, Theorem~\ref{veta_o_omezene_sirce} implies that for $\pi\in S_k\cap \cC$ and $\tau\in S_n$, the problem whether $\pi$ is contained in $\tau$ can be solved in time $kn^{s^{28.5}(\log{s})^{O(1)}}$.

\section{Skew-merged patterns}

Recall that $\Cplus$ is the class $\Av(2143,3412,3142)$. Our goal is to prove 
Theorems~\ref{thm-skew} and~\ref{veta_polynomialni_skew}. To this end, we will 
first show that permutations from $\Cplus$ admit a decomposition which is 
analogous to the stair-decomposition for the class $\C 2 0$. This will 
allow us to show that the proofs of Theorems~\ref{thm-main} 
and~\ref{veta_polynomialni} can be straightforwardly adapted into proofs of 
Theorems~\ref{thm-skew} and~\ref{veta_polynomialni_skew}, respectively.

\begin{definition}\label{def-spiral}
Let $\pi\in S_k$ be a permutation. A \emph{spiral decomposition} of $\pi$ is a 
partition of $\pi$ into a sequence $B_1, B_2, \dotsc, B_m$, where each $B_i$ is 
a possibly empty subset of elements of~$\pi$, satisfying the 
following properties:
\begin{itemize}
 \item[(a)] If $i$ is odd then $B_i$ is a decreasing subsequence, and if $i$ is 
even then $B_i$ is an increasing subsequence of~$\pi$.
 \item[(b)] For $i\in[m]$, let $r_i\in\{0,1,2,3\}$ 
be the remainder of $i$ modulo~4. See the right part of 
Figure~\ref{fig-twirl}. If $r_i=0$ then $B_{i}$ is above 
$B_{i-1}$ and $B_{>i}$ is above and to the left of $B_{i-1}$. If $r_i=1$ 
and $i>1$ 
then $B_{i}$ is to the left of $B_{i-1}$ and $B_{>i}$ is below and to the 
left of~$B_{i-1}$. If $r_i=2$ then $B_{i}$ is below $B_{i-1}$ and 
$B_{>i}$ is below and to the right of~$B_{i-1}$. If $r_i=3$ then $B_{i}$ is to 
the right of $B_{i-1}$ and $B_{>i}$ is above and to the right of~$B_{i-1}$.
\end{itemize}

The subsequences $B_1,B_2,\dotsc,B_m$ are the \emph{blocks} of the spiral 
decomposition.
\end{definition}

Note that if a block $B_i$ is empty, then $B_i$ is simultaneously to the left, 
to the right, above and below the block $B_{i-1}$.

\begin{lemma}\label{lem-spiral}
A permutation $\pi$ belongs to the class $\Cplus$ if and only if $\pi$ has a 
spiral decomposition. Moreover, every permutation in $\Cplus$ has a spiral 
decomposition with no four consecutive empty blocks.
\end{lemma}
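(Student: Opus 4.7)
The direction ``spiral decomposition $\Rightarrow \pi\in\Cplus$'' is a pattern-avoidance verification. Assume $\pi$ has a spiral decomposition $B_1,\dots,B_m$ and, for contradiction, that $\pi$ contains an occurrence $s_1 s_2 s_3 s_4$ of $2143$, $3412$, or $3142$. Since each block is monotone and of a fixed orientation (decreasing for odd indices, increasing for even), the four elements cannot all lie in a single block. Iterating Definition~\ref{def-spiral}(b) along the chain $B_{i-1},B_i,\dots,B_{j-1}$ shows that whenever $i<j$ and $B_i,B_j$ are both non-empty, $B_j$ must lie in a specific closed quadrant relative to $B_i$ determined by the cyclic distance $j-i\bmod 4$. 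A finite case analysis over the distribution of $s_1,\dots,s_4$ among the blocks, combined with within-block monotonicity and these inter-block quadrant constraints, rules out each of the three forbidden patterns. The slightly delicate subcase is when $s_1,\dots,s_4$ are split across exactly two blocks $B_i,B_j$ with $j-i\ge 2$, where one verifies each of the four possible offsets $j-i\bmod 4$ individually.

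For the converse, I build a spiral decomposition of a given $\pi\in\Cplus$ by a greedy ``onion peel''. I first choose the starting corner of the spiral so that the outer shell in that corner is a decreasing subsequence of $\pi$; the four possible starting corners correspond to the four rotational offsets of the spiral and are realised by prepending $0$, $1$, $2$, or $3$ empty leading blocks, all of which satisfy Definition~\ref{def-spiral}(b) vacuously. Having fixed the corner, I set the first non-empty block equal to the set of elements of $\pi$ visible from that corner; these automatically form a decreasing subsequence of the required orientation. The residual $\pi\setminus B_1$ is again in $\Cplus$, since $\Cplus$ is closed under taking subpermutations. I then rotate the peeling direction by $90^\circ$ and extract the next block as the outer shell of the residual in the next corner; iterating four-way round the spiral produces $B_2,B_3,\dots$, ending when the residual is empty. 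The crucial and most delicate step is verifying that after each peel the residual sits cleanly inside the quadrant demanded by the $B_{>i}$ clause of Definition~\ref{def-spiral}(b): the forbidden patterns $2143$ and $3412$ keep the residual skew-merged (hence further peelable into one increasing plus one decreasing subsequence), and the forbidden pattern $3142$ is exactly the obstruction that would leave a stranded element outside the correct quadrant --- a stranded point together with two elements of the just-peeled shell and one element of the adjacent shell would form a $3142$.

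Finally, I enforce the ``no four consecutive empty blocks'' clause by iteratively deleting any group $B_i,B_{i+1},B_{i+2},B_{i+3}$ of four consecutive empty blocks and renumbering $B_{i+4},B_{i+5},\dots$ as $B_i,B_{i+1},\dots$. Because we delete exactly four blocks, every remainder $r_j\bmod 4$ is preserved, so each renumbered block retains its required monotonicity type and cyclic orientation; the only positional clause that crosses the deleted stretch is the $B_{>i}$ clause attached to $B_{i-1}$, and this was already satisfied by the elements of the original $B_{>i}$, so it transfers verbatim to the new decomposition. The main obstacle I expect is the residual-containment step in the second paragraph --- formally pinning down how $3142$-avoidance translates into the quadrant constraint at every stage of the peeling, and handling the boundary case where the ``visible from the corner'' shell is empty but the residual is not.
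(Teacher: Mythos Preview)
Your plan for the backward direction is sound but heavier than necessary. Iterating the quadrant clauses in Definition~\ref{def-spiral}(b) shows directly that the union of all odd-indexed blocks is a single decreasing subsequence and the union of all even-indexed blocks is a single increasing subsequence; hence $\pi$ is skew-merged and avoids $2143$ and $3412$ automatically. Only $3142$ needs a direct check, and here the paper exploits that $3142$ has a \emph{unique} partition into an increasing and a decreasing part: in any occurrence $(l,b,t,r)$, the elements $l,r$ lie in odd-indexed blocks and $b,t$ in even-indexed ones. Taking $B_j$ to be the first block meeting $\{l,b,t,r\}$, the clause for $B_{>j+1}$ relative to $B_j$ forces all later elements of the occurrence to one horizontal (or vertical) side of $B_j$, contradicting that $l,r$ (resp.\ $b,t$) straddle it. No multi-block case split is required.

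The gap you flag in the forward direction is genuine, and the paper avoids it by anchoring the peel not to geometry but to a fixed skew-merged partition. Choose once and for all $\pi=I\cup D$ with $I$ increasing and $D$ decreasing, set $I_i=I\setminus B_{<i}$, $D_i=D\setminus B_{<i}$, and take $B_i$ to be $\{x\in D_i:x\text{ above }I_i\}$, $\{x\in I_i:x\text{ left of }D_i\}$, $\{x\in D_i:x\text{ below }I_i\}$, or $\{x\in I_i:x\text{ right of }D_i\}$ according as $r_i=1,2,3,0$. Each clause of Definition~\ref{def-spiral}(b) is then a one-line check (e.g.\ for $r_i=0$: $B_i\subseteq I_{i-1}$ is above $B_{i-1}$ by the definition of $B_{i-1}$, and any $x\in I_i\setminus B_i$ lies left of some element of $D_i$, hence left of~$B_{i-1}$). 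There is no ``stranded point'' to chase. The ``no four consecutive empty blocks'' then falls out of the same construction rather than post-processing: if $B_M,\dots,B_{M+3}$ are all empty with nonempty residual $R$, the four emptiness conditions force the rightmost, topmost, leftmost and bottommost elements of $R$ to lie in $D,I,D,I$ respectively, and these four elements form a $3142$. Your deletion-of-four-empties argument is correct, but with the $I/D$-anchored construction it is never needed; and your ``choose the starting corner'' step is likewise unnecessary, since the construction already allows $B_1$ (and any later block) to be empty.
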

\begin{proof}
Suppose first that $\pi$ is in $\Cplus$. In particular, $\pi$ is skew-merged, 
and therefore it can be partitioned into an increasing subsequence $I$ and a 
decreasing subsequence~$D$. 

We now define an infinite sequence $B_1, B_2,\dotsc$, where each $B_i$ is a 
possibly empty subset of~$\pi$. The $B_i$ will be defined inductively. For 
an $i\in\mathbb{N}$, suppose that $B_1, B_2,\dotsc, B_{i-1}$ have already been 
defined. Define $I_i$ as $I\setminus (B_1\cup B_2\cup\dots\cup B_{i-1})$ and 
$D_i$ as $D\setminus (B_1\cup B_2\cup\dots\cup B_{i-1})$. Let 
$r_i\in\{0,1,2,3\}$ be the remainder of $i$ modulo 4. To define $B_i$, we 
distinguish four cases:
\begin{itemize}
\item If $r_i=0$, then $B_i=\{x\in I_i;\; x\text{ is to the right of }D_i\}$. 
\item If $r_i=1$, then $B_i=\{x\in D_i;\; x\text{ is above }I_i\}$.
\item If $r_i=2$, then $B_i=\{x\in I_i;\; x\text{ is to the left of }D_i\}$.
\item If $r_i=3$, then $B_i=\{x\in D_i;\; x\text{ is below }I_i\}$.
\end{itemize}

Let $B_m$ be the last nonempty block in the sequence $B_1, B_2,\dotsc$ defined 
above (if all the blocks are empty, set $m=0$). We now show that 
$B_1,\allowbreak B_2,\dotsc,\allowbreak B_m$ is a spiral decomposition of~$\pi$. 
Clearly, $B_i$ is a 
decreasing sequence for odd $i$ and an increasing sequence for even~$i$.
Let $B_\infty$ be the set of elements of $\pi$ not belonging to $B_i$ 
for any integer~$i$.

Let us verify that the blocks have the correct mutual position. Let $B_{>i}$ 
be the set of elements of $\pi$ not belonging to $B_1\cup B_2\cup 
\dots\cup B_i$; in particular, $B_\infty$ is a subset of $B_{>i}$. We claim 
that for any $i\in \{2,3,\dotsc,m\}$, the mutual positions of $B_{i-1}$, $B_i$ 
and 
$B_{>i}$ are the same as in part (b) of Definition~\ref{def-spiral}. 

Suppose that $r_i=0$, the other three cases being analogous. Note that 
$I_i=I_{i-1}$ and $D_i=D_{i-1}\setminus B_{i-1}$. Since $B_{i-1}$ is below 
$I_{i-1}$ by definition and $B_i$ is a subset of $I_{i-1}$, we see that 
$B_i$ above $B_{i-1}$.

To see that $B_{>i}$ is to the left and above $B_{i-1}$, notice that $B_{>i}$ 
is the disjoint union of $D_i$ and $I_i\setminus B_i$. Clearly $D_i$ is above 
and to the left of $B_{i-1}$, and $I_i$ is above $B_{i-1}$ by 
definition of $B_{i-1}$. Moreover, every element of $I_i\setminus B_i$ must be 
to the left of at least one element of $D_i$ by definition of $B_i$, and 
therefore also to the left of~$B_{i-1}$. This shows that $B_{>i}$ is to the left 
and above~$B_{i-1}$.

To prove that $B_1,B_2,\dotsc,B_m$ is a spiral decomposition, it remains to show 
that each element of $\pi$ is in $B_1\cup B_2\cup\dots\cup B_m$. Suppose for 
contradiction that this is not the case, that is, suppose that $B_\infty$ is 
not empty. Let $b,l,t,r$ be the bottommost, leftmost, topmost and rightmost
element of $B_\infty$, respectively. 

Let $M$ be the smallest multiple of $4$ larger than~$m$. Note that 
$B_{>M}=B_{>m}=B_\infty$, since all the blocks following $B_m$ are empty. 
Consider the 
four empty blocks $B_M$, $B_{M+1}$, $B_{M+2}$, and $B_{M+3}$. We 
may deduce that $r$ belongs to $D_M$, for otherwise it would be to the right of 
$D_M$ and would belong to~$B_M$. By the same argument, $t$ belongs to $I_M$, 
$l$ belongs to $D_M$, and $b$ belongs to~$I_M$. It follows that $l,b,t,r$ form 
an occurrence of the pattern $3142$, contradicting the assumption that $\pi$ is 
in~$\Cplus$.

We have thus shown that each permutation in $\Cplus$ has a spiral 
decomposition. 
The argument actually shows that we may find a spiral decomposition with no 
four 
consecutive empty blocks.

We now show that every permutation with a spiral decomposition is in~$\Cplus$. 
Let $\pi$ be a permutation with a spiral decomposition $B_1,B_2,\dotsc,B_m$. It 
easily follows from Definition~\ref{def-spiral} that the union of the 
odd-numbered blocks is a decreasing sequence and the union of the even-numbered 
ones is an increasing sequence. In particular, $\pi$ is skew-merged.

It remains to show that $\pi$ avoids $3142$. Suppose for contradiction that 
$\pi$ has a subsequence $S=(l,b,t,r)$ forming an occurrence of $3142$. Since 
there is only one way to partition $3142$ into a decreasing and an increasing 
subsequence, we know that $l$ and $r$ belong to odd-numbered blocks, and $t$ 
and $b$ to even-numbered ones. 

Let $B_j$ be the first block of the spiral decomposition containing at least 
one element of~$S$. Suppose that $j$ is even, the other case being analogous. 
Then $B_j$ contains $t$ or~$b$, and both $l$ and $r$ belong to~$B_{>j}$. 
From part (b) of Definition~\ref{def-spiral} we may deduce that 
either $B_{>j}$ is to the left of $B_j$ (if $j \equiv 0 \mod 4$), or $B_{>j}$ 
is to the right of $B_j$ (if $j\equiv 2 \mod 4$). This contradicts the fact 
that both $l$ and $r$ belong to~$B_{>j}$ and at least one of $t$ and $b$ is 
in~$B_j$.
\end{proof}

\subsection{The twirl}

We now introduce an operation called the twirl, whose main 
purpose is to transform a permutation $\pi$ with a given stair-decomposition 
$B_1,B_2,\dotsc,B_m$ into a permutation $\pi^*$ with a spiral decomposition 
$B^*_1,B^*_2,\dotsc, B^*_m$. Intuitively, each block $B^*_i$ will be obtained 
by 
rotating or reflecting $B_i$, and then rearranging the blocks into a spiral.

\begin{figure}
\begin{center}
\includegraphics{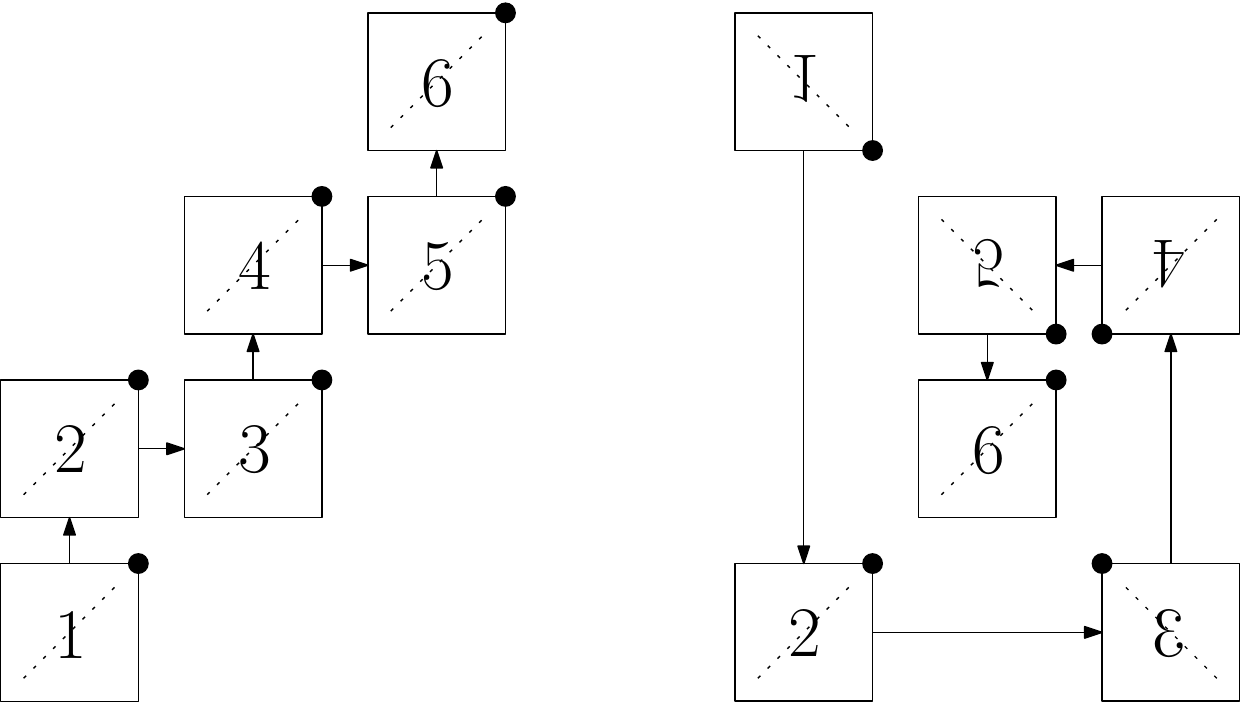}
\end{center}
 \caption{Left: a permutation $\pi\in\C{2}{0}$ with a stair-decomposition into 
six blocks. The top-right corner of each block is highlighted. Right: the 
result of twirling $\pi$---a spiral decomposition whose blocks are rotated and 
reflected versions of the blocks of~$\pi$, with the highlighted corners now 
pointing towards the center of the spiral.}\label{fig-twirl}
\end{figure}

Formally, given a permutation $\pi$ with a stair-decomposition $B_1,\allowbreak 
B_2,\dotsc,\allowbreak B_m$, 
the \emph{twirl} of $\pi$ (with respect to $B_1,B_2,\dotsc,B_m$) is a 
permutation $\pi^*$ with a spiral decomposition $B^*_1,B^*_2,\dotsc,B^*_m$, 
determined 
by these rules:
\begin{itemize}
 \item For each $i$, the block $B_i^*$ is obtained by rotating or reflecting 
$B_i$, in such a way that rows of $B_i$ get mapped to rows of $B_i^*$ and 
similarly for columns. Moreover, the top-right corner of $B_i$ will be mapped 
to the corner of $B^*_i$ that is nearest to the center of the spiral; see 
Figure~\ref{fig-twirl}.
\item For every pair of consecutive blocks $B_i$ and $B_{i+1}$, let 
$x\in B_i$ and $y\in B_{i+1}$ be a pair of elements, and let $x^*$ and $y^*$  
be the corresponding elements of $B^*_i$ and $B^*_{i+1}$, respectively. Then 
the mutual position of $x^*$ and $y^*$ corresponds to the mutual position of 
$x$ and $y$, in the following sense:
\begin{itemize}
\item if $i\equiv 0 \mod 4$, then $x^*$ is above $y^*$ if and only if $x$ is 
below~$y$,
\item if $i\equiv 1 \mod 4$, then $x^*$ is to the right of $y^*$ if and only if 
$x$ is to the right of~$y$,
\item if $i\equiv 2 \mod 4$, then $x^*$ is above $y^*$ if and only 
if $x$ is above~$y$,
\item if $i\equiv 3 \mod 4$, then $x^*$ is to the right of $y^*$ if and only if 
$x$ is left of~$y$.
\end{itemize}
\end{itemize}
Note that the properties above, together with the fact that 
$B^*_1,B^*_2,\dotsc,B^*_m$ form a spiral decomposition, determine $\pi^*$ 
uniquely.
We say that $\pi^*$ is the \emph{twirl} of $\pi$ and $\pi$ is the 
\emph{untwirl} of $\pi^*$ (with respect to given decompositions 
$B_1,B_2,\dotsc,B_m$ and $B^*_1,B^*_2,\dotsc,B^*_m$, respectively).

We will also need to apply this transform in situations when the blocks of the 
decomposition do not necessarily induce a monotone subsequence. We therefore 
generalize the two notions of decomposition appropriately. We say that a 
partition of a permutation $\pi$ into a sequence of blocks $B_1,B_2,\dotsc, 
B_m$ 
forms 
a \emph{relaxed stair-decomposition} if it satisfies all the properties of 
stair-decomposition except possibly for the property that each $B_i$ forms an 
increasing subsequence. Similarly, a \emph{relaxed spiral decomposition} 
satisfies all the properties of a spiral decomposition, except perhaps the 
property that each block is a monotone subsequence. 

The twirl transform can be applied to a permutation with a relaxed 
stair-decomposition to obtain a permutation with a relaxed spiral 
decomposition.

\subsection{Proof of Theorem~\ref{thm-skew}}
We want to show that PPM is NP-hard for instances where the pattern belongs to 
$\Cplus$ and the text to~$\C 2 1$. Our idea is to use the pattern $\pi\in\C 2 
0$ and the text $\tau\in\C 3 0$ constructed from a 3-SAT formula $\Phi$ in the 
proof of Theorem~\ref{thm-main}, and twirl them into a pattern $\pi^*\in\C 1 1$ 
and text $\tau^*\in\C 2 1$. We will then show that $\Phi$ is satisfiable if and 
only if $\tau^*$ contains~$\pi^*$.

For the twirl to be well defined, we need to specify a stair-decomposition of 
$\pi$ and a relaxed stair-decomposition of~$\tau$. Recall that $\pi$ is a 
disjoint union of an increasing sequence $A$ (the anchor) and a $v$-fold 
staircase $X$ with $2c+1$ steps. The staircase $X$ itself is a sequence of 
bends 
$Q_1,\allowbreak P_1,\allowbreak Q_2,\allowbreak P_2,\dotsc,\allowbreak 
Q_{2c+1},\allowbreak P_{2c+1}$, with $Q_i$ being the $i$th outer bend and 
$P_i$ the $i$th inner bend of~$X$. The sequence $A, Q_1, P_1, Q_2, P_2,\dotsc, 
Q_{2c+1}, P_{2c+1}$ is clearly a stair-decomposition of $\pi$, and we 
obtain $\pi^*$ by twirling $\pi$ with respect to this decomposition.

For $\tau$, we consider the relaxed 
stair-decomposition whose blocks are the anchor $A'$, followed by the 
modified bends 
$\moQ_{1},\moP_{1},\moQ_{2},\moP_{2},\dotsc,\moQ_{2c+1},\moP_{2c+1}$ of the 
modified staircase~$\mo{Y}$. Let $\tau^*$ be the permutation obtained by 
twirling~$\tau$.

Note that the union of the odd-numbered blocks of $\tau^*$ is a decreasing 
sequence while the union of the even-numbered blocks can be decomposed into at 
most two increasing sequences. It follows that $\tau^*$ is in~$\C 2 1$.

We easily check that $\tau^*$ contains $\pi^*$ if and only if $\Phi$ is 
satisfiable, by following the same reasoning as in the proof of 
Theorem~\ref{thm-main}. This completes the proof of Theorem~\ref{thm-skew}.

\subsection{Proof of Theorem~\ref{thm-principal}}
Let $F$ denote the set of permutations $\{1,12,21,132,213,231,312\}$.
Recall that Theorem~\ref{thm-principal} states that $\Av(\alpha)$-Pattern 
PPM is polynomial for each $\alpha$ in $F$ and NP-complete for any 
other~$\alpha$.

First of all, we note that for each $\alpha\in F$, the class $\Av(\alpha)$ is a 
subclass of the class $\Av(2413,3142)$ of separable permutations. Since 
$\Av(2413,3142)$-Pattern PPM is polynomial by a result of Bose, Buss and 
Lubiw~\cite{BBL}, $\Av(\alpha)$-Pattern PPM is polynomial as well.

On the other hand, if $\alpha$ contains $321$ as a subpermutation, then 
$\Av(\alpha)$ contains $\Av(321)$ as a subclass. Since $\Av(321)$-Pattern PPM 
is NP-hard by Theorem~\ref{thm-main}, $\Av(\alpha)$-Pattern PPM is NP-hard as 
well. By symmetry, $\Av(\alpha)$-Pattern PPM is also hard when $\alpha$ 
contains~$123$.

There are only four permutations that do not belong to $F$ and do not contain a 
monotone subsequence of length $3$, namely $2143$, $3412$, $2413$, and $3142$. 
Theorem~\ref{thm-skew} implies that $\Av(\alpha)$-Pattern PPM is hard for any 
$\alpha\in\{2143, 3412, 3142\}$. By symmetry, the problem is hard for 
$\alpha=2413$ as well. This completes the proof of Theorem~\ref{thm-principal}.

\subsection{Proof of Theorem~\ref{veta_polynomialni_skew}}

Our goal is to show that for every proper subclass $\cC$ of $\Cplus$, the 
$\cC$-Pattern PPM problem is polynomial. We will again use the twirl to adapt 
the proof of Theorem~\ref{veta_polynomialni} that we gave in 
Section~\ref{sec-poly}. 

As in the proof of Theorem~\ref{veta_polynomialni}, our goal is to show that 
for permutations $\pi$ taken from any proper subclass of $\Cplus$, the graphs 
$G_\pi$ have bounded treewidth, or more precisely, that every permutation 
$\pi\in\Cplus$ whose graph $G_\pi$ has sufficiently large treewidth contains a 
universal pattern.

Let $\pi$ be a permutation with a stair-decomposition $B_1,B_2,\dotsc,B_m$, and 
let $\sigma$ be a permutation with a stair-decomposition $C_1,C_2,\dotsc,C_k$ 
such 
that $k\le m$. We say that $\sigma$ has a \emph{block-preserving occurrence} in 
$\pi$ (with respect to the decompositions $B_1,B_2,\dotsc,B_m$ and 
$C_1,C_2,\dotsc,C_k$) 
if it has an occurrence in $\pi$ with the property that for each $i\le k$ the 
elements of $C_i$ are mapped to the elements of~$B_i$. We also use the same 
terminology when dealing with spiral decompositions.

Notice that block-preserving occurrences are preserved by the twirl operation, 
as formalized by the next observation.

\begin{observation}\label{obs-twirl}
Let $\pi$ be a permutation with a stair-decomposition $B_1,\allowbreak 
B_2,\dotsc,\allowbreak 
B_m$, and let $\sigma$ be a permutation with a stair-decomposition 
$C_1,\allowbreak C_2,\dotsc,\allowbreak C_k$. Let $\pi^*$ with spiral 
decomposition $B^*_1,B^*_2,\dotsc,B^*_m$ 
and $\sigma^*$ with spiral decomposition $C^*_1,C^*_2,\dotsc,C^*_k$ be obtained 
by 
twirling $\pi$ and $\sigma$, respectively. Then $\sigma$ has a block-preserving 
occurrence in $\pi$ if and only if $\sigma^*$ has a block-preserving occurrence 
in~$\pi^*$.
\end{observation}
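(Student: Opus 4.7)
The plan is to treat the twirl as an explicit bijection and to construct, from a block-preserving occurrence $f$ of $\sigma$ in $\pi$, a corresponding block-preserving occurrence $f^*$ of $\sigma^*$ in $\pi^*$ (the converse then follows by the same argument applied to the untwirl). Each element $c$ of $\sigma$ has a canonical twirled counterpart $c^* \in \sigma^*$ determined by the block $C_i$ containing $c$ and the position of $c$ within that block, and the analogous correspondence exists between $\pi$ and $\pi^*$. Given $f$, I would simply set $f^*(c^*) := (f(c))^*$. Block-preservation of $f^*$ is then immediate, since the twirl preserves block indices and $f$ maps $C_i$ into $B_i$; the content of the argument is to check that $f^*$ is order-isomorphic.

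To verify order-isomorphism, I would split into three cases depending on the blocks containing a pair $c^*, d^* \in \sigma^*$. If $c^*$ and $d^*$ both lie in the same block $C_i^*$, then the map $B_i \to B_i^*$ is a single rotation or reflection determined only by $i \bmod 4$ through the corner-alignment rule in the twirl definition, and by construction the identical transformation carries $C_i$ to $C_i^*$; hence the relative vertical and horizontal positions of $c, d$ transform in exactly the same way as those of $f(c), f(d)$. If $c^* \in C_i^*$ and $d^* \in C_{i+1}^*$ lie in consecutive blocks, the second bullet of the definition of the twirl explicitly prescribes, based on $i \bmod 4$, how the ``variable'' direction (vertical for even $i$, horizontal for odd $i$) of a pair in $B_i \times B_{i+1}$ translates into the corresponding relation on the twirled side; the same rule applies on the $\sigma$-side, so this direction is preserved by $f^*$. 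The ``fixed'' direction is forced by the spiral-decomposition structure (e.g.\ $B_{i+1}^*$ lies entirely on one specific side of $B_i^*$) and therefore matches on both sides automatically. Finally, if $c^* \in C_i^*$ and $d^* \in C_j^*$ with $j \geq i+2$, iterated application of Definition~\ref{def-spiral}(b) shows that the mutual position of $C_i^*$ and $C_j^*$, and of $B_i^*$ and $B_j^*$, is entirely determined by $i \bmod 4$, so the order relation holds automatically.

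The main obstacle will be the case analysis for consecutive blocks: one must verify carefully that the twirl's prescription for the ``variable'' direction, together with the ``fixed'' direction forced by the spiral structure, gives a transformation that is simultaneously compatible with both $B_i \to B_i^*$ and $C_i \to C_i^*$ for every $i$ (that is, that the corner-alignment rule induces the same rotation or reflection on any block with a given index parity). Once this bookkeeping is in place, the three-case verification yields the order-isomorphism of $f^*$, and the observation follows.
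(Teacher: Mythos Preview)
The paper states this as an Observation without proof, treating it as immediate from the definition of the twirl. Your proposal is correct and simply makes explicit the verification the paper leaves to the reader: the same-block case follows because the rotation/reflection applied to a block depends only on its index modulo~4, the consecutive-block case is precisely what the second bullet of the twirl definition is designed to guarantee, and the non-consecutive case is forced by the block structure on both the stair and spiral sides.
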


Let $\tau_k$ be the $k$-track permutation, and let $A_1,A_2,\dotsc,A_k$ be its 
stair-decom\-po\-sition into blocks of size $k$, introduced in 
Section~\ref{sec-poly}. The \emph{$k$-spiral} is the permutation $\tau^*_k$ 
obtained by twirling $\tau_k$ with respect to this stair-decomposition. When 
dealing with $k$-spirals, we always assume they have a spiral decomposition 
obtained by twirling the stair decomposition $A_1,\allowbreak 
A_2,\dotsc,\allowbreak A_k$.

\begin{lemma}\label{lem-qspiral}
Every $k$-permutation $\pi\in\Cplus$ with a spiral decomposition into $m$ blocks
 has a block-preserving occurrence in the $q$-spiral, for $q=\max\{k,m\}$.
\end{lemma}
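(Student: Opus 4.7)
The plan is to transport the statement back to the $321$-avoiding setting via the untwirl operation, and then invoke Lemma \ref{lemma_ktrack}. Let $B_1, B_2, \dotsc, B_m$ denote the given spiral decomposition of $\pi \in \Cplus$, and let $\rho$ be the permutation obtained by untwirling $\pi$ with respect to this decomposition. The untwirl equips $\rho$ with a stair-decomposition $D_1, D_2, \dotsc, D_m$ of length $m$: the rotation/reflection rules of the twirl are tailored so that the alternating decreasing/increasing blocks of a spiral decomposition become the uniformly increasing blocks of a stair-decomposition, and the four relative-position rules of Definition \ref{def-spiral}(b), indexed by $i \bmod 4$, correspond case by case to the staircase geometry. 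Because $\rho$ has a stair-decomposition, it splits as a union of two increasing subsequences (the union of the odd-indexed blocks and the union of the even-indexed ones), so in particular $\rho \in \Av(321)$.

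Next, pad $D_1, D_2, \dotsc, D_m$ with trailing empty blocks up to length $q = \max\{k,m\}$, and apply Lemma \ref{lemma_ktrack} to $\rho$. This produces a block-preserving occurrence of $\rho$ inside the $q$-track $\tau_q$ that sends each $D_i$ into the $i$th block $A_i$ of the canonical stair-decomposition of $\tau_q$. By Observation \ref{obs-twirl}, twirling both sides of this situation preserves the block-preserving occurrence. Since twirling $\rho$ returns $\pi$, and twirling $\tau_q$ yields, by definition, the $q$-spiral $\tau_q^*$, we conclude that $\pi$ has a block-preserving occurrence in $\tau_q^*$, as required.

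The only nontrivial step is confirming that the untwirl of $\pi$ really is a legitimate permutation carrying a valid stair-decomposition, that is, verifying that the rotation and position rules of the twirl are the correct inverses of those appearing in Definition \ref{def-spiral}. This is a short case analysis on $i \bmod 4$: one checks that rotating an odd-indexed decreasing block (respectively an even-indexed increasing block) by the angle prescribed by the twirl produces an increasing block in the staircase, and that the ``spiral'' positional relations between $B_i$, $B_{i-1}$ and $B_{>i}$ are sent to the ``staircase'' relations between $D_i$, $D_{i-1}$ and $D_{>i}$. Once this bookkeeping is dispatched, the rest of the argument is a direct combination of Lemma \ref{lemma_ktrack} and Observation \ref{obs-twirl}.
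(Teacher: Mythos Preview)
Your proposal is correct and follows exactly the same approach as the paper: untwirl to $\Av(321)$, apply Lemma~\ref{lemma_ktrack} to obtain a block-preserving occurrence in the $q$-track, and then transport it back via Observation~\ref{obs-twirl}. The paper's proof is the one-line version of what you have spelled out in detail.
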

\begin{proof}
 This follows directly from Lemma~\ref{lemma_ktrack}, via 
Observation~\ref{obs-twirl}.
\end{proof}
Since each $n$-permutation in $\Cplus$ has a spiral decomposition with at most 
$4n$ blocks, we conclude that each $n$-permutation in $\Cplus$ is contained in 
the $4n$-spiral.

For a permutation $\pi=\pi(1),\pi(2),\dotsc,\pi(n)$, the 
\emph{reverse-complement} of 
$\pi$ is the permutation $n+1-\pi(n), n+1-\pi(n-1),\dotsc,n+1-\pi(1)$. 
Intuitively, the diagram of the reverse-complement is obtained from the diagram 
of $\pi$ by a rotation of 180 degrees. Notice that $\Cplus$ is closed under 
reverse-complements. Consequently, each $n$-permutation 
in $\Cplus$ is contained in the reverse-complement of the $4n$-spiral.

The notion of good edges and bad edges in $G_\pi$, which we introduced in 
Section~\ref{sec-poly}, can be extended to spiral decompositions.
Let $\pi\in\Cplus$ be a permutation with a spiral decomposition 
$B_1,B_2,\dotsc,B_m$. 
Let $xy$ be an edge of $G_\pi$, such that $x\in B_i$ and $y\in B_j$, with $i\le 
j$. Then the edge $xy$ is \emph{good} if it satisfies at least one of the 
following properties:
\begin{itemize}
\item $i=j$
\item $xy$ is blue, $i$ is odd and $j=i+1$, or
\item $xy$ is red, $i$ is even and $j=i+1$.
\end{itemize}
Otherwise $xy$ is \emph{bad}.

The twirl operation clearly preserves the good edges.
\begin{observation}\label{obs-good}
 Let $\pi$ be a permutation with a stair-decomposition 
$B_1,\allowbreak B_2,\dotsc,\allowbreak B_m$ and 
let $\pi^*$ be the permutation with spiral decomposition  $B_1^*,\allowbreak 
B_2^*,\dotsc,\allowbreak B_m^*$
obtained by 
twirling~$\pi$. Let $x$ and $y$ be two elements of $\pi$ and $x^*$ and $y^*$ 
the corresponding elements of~$\pi^*$. Then $xy$ is a good edge of $G_\pi$ if 
and only if $x^* y^*$ is a good edge of $G_{\pi^*}$.
\end{observation}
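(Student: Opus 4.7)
The plan is to verify the equivalence by a direct case analysis based on the definition of the twirl. The key structural property is that the twirl preserves block membership, so the indices $i \le j$ with $x \in B_i$, $y \in B_j$ coincide with the indices of the blocks containing $x^*$ and $y^*$. Since the definition of a good edge depends only on the pair $(i,j)$ and on the color of the edge, it suffices to verify that whenever an edge is good on one side of the twirl, the corresponding pair on the other side is an edge and has the same color. The proof then naturally splits into the two cases $i=j$ and $j=i+1$; edges between non-consecutive blocks are bad in both graphs, so no verification is required for them.

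For within-block edges ($i=j$), I would use the fact that the twirl acts on each block by a rigid motion that sends rows to rows and columns to columns; in particular no $90^{\circ}$ rotation ever takes place, only horizontal and/or vertical reflections. Such reflections preserve unit distance in both the row and column coordinates, so blue edges (column-adjacent) and red edges (row-adjacent) are preserved. Hence an edge inside $B_i$ of a given color in $G_{\pi}$ corresponds to an edge inside $B_i^*$ of the same color in $G_{\pi^*}$, and vice versa. Since the condition $i=j$ already suffices to make an edge good, this case is then finished.

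For edges between consecutive blocks $B_i$ and $B_{i+1}$, I would run a case analysis on $i \bmod 4$ following the four bullets in the definition of the twirl. In each case exactly one of the two coordinate comparisons between elements of the two blocks is prescribed by the twirl rule (the other coordinate is already fixed by the stair or spiral geometry), and that rule either preserves or reverses the coordinate in question. Since ``positions differ by $1$'' and ``values differ by $1$'' are symmetric conditions, reversal does not affect them, so a blue edge between $B_i$ and $B_{i+1}$ maps to a blue edge and a red edge maps to a red edge. Consequently the good blue edges (those with $i$ odd) and the good red edges (those with $i$ even) on the two sides of the twirl are in exact bijection. The main obstacle is simply the bookkeeping in the four cases of $i \bmod 4$, but the conceptual reason the proof works is transparent: the twirl is designed so that rows stay rows and columns stay columns inside every block, and the inter-block rules only reverse (never swap) these two coordinates, so edge colors are preserved throughout.
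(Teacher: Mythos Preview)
The paper states this as an observation without proof, treating it as immediate from the definition of the twirl; your case analysis is the natural way to flesh it out and is correct in outline.

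One point worth making more explicit (you gesture at it as ``bookkeeping''): in the within-block case you argue that the reflection preserves unit distance in rows and columns, but unit distance \emph{within the block} is not the same as unit distance \emph{in $\pi$}, since $B_i$ is horizontally interleaved with $B_{i\pm 1}$ (for the appropriate parity) and vertically interleaved with the other neighbour. What actually makes the argument work is that in both stair and spiral decompositions, for each odd $i$ the union $B_i\cup B_{i+1}$ occupies a contiguous interval of positions (and for even $i$, a contiguous interval of values), and the within-block reflections prescribed by the ``top-right corner to centre'' rule are consistent with the inter-block rule, so that the relevant coordinate order on all of $B_i\cup B_{i+1}$ is either uniformly preserved or uniformly reversed. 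Once that is checked across the four residues of $i\bmod 4$, your argument goes through exactly as written.
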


\begin{lemma}\label{lem-spiral-bad}
If a permutation $\pi\in\Cplus$ has a spiral decomposition with $m$ blocks, 
then $G_\pi$ has at most $4m$ bad edges.
\end{lemma}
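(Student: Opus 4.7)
The plan is to prove the slightly stronger bound that for each $i\in[m]$, at most $3$ bad edges of $G_\pi$ have their smaller-indexed endpoint in~$B_i$; summing over $i$ then gives at most $3m\le 4m$ bad edges, with each bad edge counted exactly once.

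First I would split the bad edges outgoing from $B_i$ to later blocks into two groups: those terminating in $B_{i+1}$ and those terminating in~$B_{>i+1}$. For the former, I would read off from Definition~\ref{def-spiral} that $B_{i+1}$ is either strictly above or below $B_i$ (``row-separated'', when $r_{i+1}\in\{0,2\}$) or strictly to the left or right (``column-separated'', when $r_{i+1}\in\{1,3\}$). In the row-separated case only one value-adjacent pair of elements can cross the row boundary, so at most one red edge joins $B_i$ to $B_{i+1}$; in the column-separated case, at most one blue edge. Crucially, $r_{i+1}$ also pins down the parity of $i$ (even in the column-separated case, odd in the row-separated case), so the ``good'' color between $B_i$ and $B_{i+1}$ always matches the color that can appear in many copies, leaving at most one bad edge of the other color.

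Next I would bound the bad edges between $B_i$ and~$B_{>i+1}$. The key observation here, again from Definition~\ref{def-spiral} applied with index $i+1$, is that $B_{>i+1}$ lies strictly in one of the four diagonal quadrants of $B_i$ (determined by $r_{i+1}$), so $B_i$ and $B_{>i+1}$ are simultaneously row- and column-separated. Only the single extreme column of $B_i$ facing $B_{>i+1}$ can have its horizontal neighbor lie in $B_{>i+1}$, yielding at most one blue edge between them; symmetrically, at most one red edge from the extreme row of $B_i$ facing~$B_{>i+1}$. Every such edge is automatically bad since the two block indices differ by at least~$2$. Combining with the previous step gives the per-block bound $1+2=3$, and summation over $i$ finishes the argument.

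The main obstacle will be carefully verifying the diagonal-quadrant claim used in the second step: for each residue class $r_{i+1}\bmod 4$ one has to read the constraint on $B_{>i+1}$ relative to $B_i=B_{(i+1)-1}$ from the corresponding clause of Definition~\ref{def-spiral} and check that it gives both a row- and a column-separation. Once that spatial picture is in place, the counting of blue and red edges is routine.
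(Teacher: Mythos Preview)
Your proof is correct and in fact yields the sharper bound $3m$. It differs from the paper's argument, which is shorter but coarser: the paper observes that every bad edge has \emph{both} endpoints among the leftmost or rightmost elements of their respective blocks, so the bad edges live in a subgraph on at most $2m$ vertices of degree at most~$4$, giving at most $4m$ edges by a handshake count. Your approach instead orients each bad edge toward its smaller-indexed block and bounds the outgoing contribution of each $B_i$ separately, distinguishing the single possible bad edge to $B_{i+1}$ from the at most two (one blue, one red) to~$B_{>i+1}$. The paper's argument is quicker to state but leaves the key geometric fact (that the far endpoint of a bad edge is also extremal in its block) implicit; your argument is more explicit, uses only the near-endpoint extremality, and extracts a better constant. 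Either approach is adequate for the paper's purposes, since Lemma~\ref{lem-spiral-bad} is only used in Lemma~\ref{lem-spiral-tw} and Theorem~\ref{thm-hledani-k-spiral} where the slack is ample.
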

\begin{proof}
 It is easy to see that if $xy$ is a bad edge in $G_\pi$, then both $x$ and $y$ 
are either leftmost or rightmost elements of their blocks in the spiral 
decomposition of~$\pi$. Since every vertex of $G_\pi$ has degree at most 4 
and at most $2m$ vertices are incident to bad edges, $G_\pi$ has at most 
$4m$ bad edges.
\end{proof}

\begin{lemma}\label{lem-spiral-tw}
If $\pi\in\Cplus$ has a spiral decomposition with $m$ blocks, 
then the treewidth of $G_\pi$ is at most~$6m$.
\end{lemma}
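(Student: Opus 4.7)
The plan is to reduce to Lemma~\ref{lemma_tw_blocks} via the untwirl. Let $\pi'$ be the untwirl of $\pi$ with respect to the given spiral decomposition $B_1, B_2, \dotsc, B_m$; by the block-by-block un-rotation prescribed by the twirl, $\pi'$ inherits a (non-relaxed) stair decomposition $B_1', B_2', \dotsc, B_m'$, with each block an increasing subsequence. Applying Observation~\ref{obs-good} in the direction from $\pi'$ (stair) to $\pi$ (spiral), the good subgraph $H^*$ of $G_\pi$ is isomorphic, as an abstract graph, to the good subgraph $H'$ of $G_{\pi'}$ via the bijection $x \leftrightarrow x^*$.

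I would next apply Lemma~\ref{lemma_tw_blocks} to $\pi'$. Although that lemma is stated under the hypothesis $\pi' \in \Av(321)$, its proof uses only that $\pi'$ admits a non-relaxed stair decomposition: the linear order $<_{\pi'}$ is constructed as a common extension of the partial orders on the consecutive pairs $B_i' \cup B_{i+1}'$, well-defined precisely because each block $B_i'$ is increasing (so that ``left-to-right position'' and ``bottom-to-top value'' agree on every shared block), and the subsequent decomposition of $G_{\pi'}$ into $2m$ monotone paths uses only the stair layout. This yields $\mathrm{tw}(G_{\pi'}) \le 2m$, so $\mathrm{tw}(H') \le 2m$, and by the isomorphism $\mathrm{tw}(H^*) \le 2m$.

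Finally I would add back the bad edges. By Lemma~\ref{lem-spiral-bad}, $G_\pi$ has at most $4m$ bad edges, and the proof of that lemma shows that every bad-edge endpoint is either the leftmost or the rightmost element of its spiral block. Hence the set $S$ of all bad-edge endpoints satisfies $|S| \le 2m$. Starting from a tree decomposition of $H^*$ of width at most $2m$ and adjoining $S$ to every bag yields a tree decomposition of $G_\pi$ of width at most $2m + 2m = 4m$, valid because every bad edge now has both endpoints in every enlarged bag. This bound $4m$ is safely within the required $6m$.

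The main technical point to verify is twofold: first, that the untwirl of a non-relaxed spiral decomposition really is a non-relaxed stair decomposition (which requires carefully tracking the block-by-block rotations of the twirl rules, checking that each decreasing odd spiral block is inverted back to an increasing stair block), and second, that the proof of Lemma~\ref{lemma_tw_blocks} extends from $\Av(321)$ to arbitrary permutations with a non-relaxed stair decomposition. The latter amounts to inspecting that proof and confirming that $321$-avoidance is invoked only to ensure existence of the stair decomposition (via Lemma~\ref{lemma_ex_stair_dec}), not in the subsequent vertex-separation argument.
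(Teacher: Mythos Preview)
Your argument is essentially the paper's: untwirl to a $321$-avoiding permutation, invoke Lemma~\ref{lemma_tw_blocks} for a $2m$ bound on the good subgraph, then reinsert the bad edges counted by Lemma~\ref{lem-spiral-bad}. The paper handles the last step more crudely, adding the at most $4m$ bad edges one at a time (each raising treewidth by at most one) to land exactly at $6m$, whereas your observation that the bad-edge endpoints number at most $2m$ and can simply be adjoined to every bag gives the sharper bound~$4m$; note also that your worry about the hypothesis of Lemma~\ref{lemma_tw_blocks} is unnecessary, since a permutation admitting a genuine (non-relaxed) stair decomposition is automatically a union of two increasing subsequences (the odd-indexed blocks and the even-indexed blocks) and hence lies in $\Av(321)$.
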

\begin{proof}
Untwirl $\pi$ into a permutation $\pi^*\in\Av(321)$. By 
Lemma~\ref{lemma_tw_blocks}, the graph $G_{\pi^*}$ has treewidth at most~$2m$. 
By Observation~\ref{obs-good}, every good edge of $G_\pi$ is also an edge of 
$G_{\pi^*}$. Therefore, by Lemma~\ref{lem-spiral-bad}, $G_\pi$ can be obtained 
from $G_{\pi^*}$ by inserting at most $4m$ new bad edges, and possibly also 
removing some bad edges of~$G_{\pi^*}$. Inserting an edge into a graph may 
increase its treewidth 
by at most $1$, and removing an edge cannot increase the treewidth. Therefore, 
the treewidth of $G_\pi$ is at most $6m$, as claimed. 
\end{proof}

To prove Theorem~\ref{veta_polynomialni_skew}, we first prove the following 
claim, which is the $\Cplus$-analogue of Theorem~\ref{theorem_hledani_ktracku}, 
and is proved by a similar argument.

\begin{theorem}\label{thm-hledani-k-spiral}
 Let $\pi\in\Cplus$ be a permutation such that $G_\pi$ contains a $g(k)$-wall, 
where $g(k)=300k^{3/2}$. Then $\pi$ contains the $k$-spiral or the 
reverse-complement of the $k$-spiral.
\end{theorem}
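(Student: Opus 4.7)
The plan is to mimic the proof of Theorem~\ref{theorem_hledani_ktracku}, replacing stair-decompositions with spiral decompositions and using the twirl operation to transport the key lemmas to the $\Cplus$ setting. Fix a spiral decomposition $B_1,\dotsc,B_m$ of $\pi$ with no four consecutive empty blocks (Lemma~\ref{lem-spiral}). Any window of $k$ consecutive blocks induces a subpermutation in $\Cplus$ with a spiral decomposition of $O(k)$ blocks, so by Lemma~\ref{lem-spiral-tw} the induced subgraph of $G_\pi$ on such a window has treewidth at most~$10k$. Since $G_\pi$ contains a $g(k)$-wall with $g(k)=300k^{3/2}$, Lemma~\ref{lem-graf} then produces an index $b\le m-k$ and $10k$ vertex-disjoint paths in $G_\pi$ joining $B_{\le b}$ to $B_{\ge b+k}$.

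Next, take $a$ to be the smallest odd integer with $a\ge b$, so that $B_a$ is a decreasing block, matching the parity of the first block of a $k$-spiral. Only a bounded number of edges of $G_\pi$ cross the spiral between $B_{<a}$ and $B_{>a}$, or between $B_{<a+k-1}$ and $B_{>a+k-1}$, since these two sets are spatially separated by the winding of the spiral. Discarding paths that use such boundary edges, truncating the survivors to have endpoints in $B_a$ and $B_{a+k-1}$, and then discarding those using any bad edge inside the window (at most $4k$ such edges, by the argument of Lemma~\ref{lem-spiral-bad} applied to the window's relaxed spiral decomposition) still leaves at least $10k-O(1)-4k>3k-2$ vertex-disjoint good paths from $B_a$ to $B_{a+k-1}$.

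Now untwirl the subpermutation $\pi'$ of $\pi$ supported on $B_a\cup\dotsb\cup B_{a+k-1}$, obtaining a permutation $\pi''\in\Av(321)$ with a relaxed stair-decomposition $C_1,\dotsc,C_k$ whose blocks correspond to $B_a,\dotsc,B_{a+k-1}$. By Observation~\ref{obs-good} every good edge of $G_{\pi'}$ transfers to a good edge of $G_{\pi''}$, so $\pi''$ contains $3k-2$ vertex-disjoint good paths from $C_1$ to $C_k$. Lemma~\ref{lem-good-paths} (with new starting index~$1$, which is odd) then provides a block-preserving occurrence of the $k$-track in $\pi''$, which by Observation~\ref{obs-twirl} retwirls to a block-preserving occurrence in $\pi'\subseteq\pi$ of the permutation obtained by twirling the $k$-track with respect to the window's block indexing.

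The main subtlety I anticipate is identifying this retwirled pattern with either the $k$-spiral or its reverse-complement. The $k$-spiral is by definition the twirl of the $k$-track under the canonical indexing starting at~$1$; if $a\equiv 1\pmod 4$, the positional rules of Definition~\ref{def-spiral} for the window's indexing agree with the canonical ones and the retwirled pattern is exactly the $k$-spiral, whereas if $a\equiv 3\pmod 4$ these rules rotate the pattern by $180^\circ$ and the retwirled pattern is the reverse-complement of the $k$-spiral (the remaining two residues modulo~$4$ are ruled out by our choice of $a$ odd, which forces $B_a$ to be a decreasing block). Since Lemma~\ref{lem-graf} provides no control over $b\bmod 4$, both residues of $a\bmod 4$ genuinely arise, and because $\Cplus$ is closed under reverse-complement both patterns belong to the target class. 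This unavoidable dichotomy is exactly what the theorem asserts.
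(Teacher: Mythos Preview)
Your proposal is correct and follows essentially the same approach as the paper: apply Lemma~\ref{lem-spiral-tw} to bound the treewidth on windows, invoke Lemma~\ref{lem-graf}, pass to an odd starting index $a$, discard a constant number of boundary-crossing paths and at most $4k$ paths with bad edges, and then use the untwirl together with Lemma~\ref{lem-good-paths} and Observation~\ref{obs-twirl} to obtain the $k$-spiral or its reverse-complement according to $a\bmod 4$. The only notable difference is cosmetic: the paper untwirls the whole permutation $\pi$ (which has a genuine spiral decomposition starting at block~$1$) and applies Lemma~\ref{lem-good-paths} at index~$a$, whereas you untwirl only the window and apply the lemma at index~$1$; the paper's version sidesteps the phase-matching discussion you carry out at the end, and also makes it unnecessary to call the resulting decomposition ``relaxed'' (the global untwirl yields a genuine stair-decomposition).
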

\begin{proof}
Let $\pi\in\Cplus$ have a spiral decomposition $B_1,B_2,\dotsc,B_m$.
 By Lemma~\ref{lem-spiral-tw}, the subgraph of $G_\pi$ induced by $k$ 
consecutive blocks of the decomposition has treewidth at most~$6k$. We 
may invoke Lemma~\ref{lem-graf}, to obtain $10k$ vertex-disjoint paths from 
$B_{\le b}$ to $B_{\ge b+k}$ for some~$b$. Let $a$ be the smallest odd 
integer greater than or equal to~$b$. By discarding at most four paths 
containing an edge from $B_{<a}$ to $B_{>a}$ or an edge from $B_{<a+k-1}$ to 
$B_{>a+k-1}$, truncating the remaining paths to only contain vertices from 
$\bigcup_{j=a}^{a+k-1}B_j$, and discarding at most $4k$ paths containing bad 
edges, we end up with at least $10k-4-4k=6k-4>3k-2$ paths from $B_a$ to 
$B_{a+k-1}$ which only contain good edges.

Untwirl $\pi$ into a permutation $\pi^*\in\Av(321)$ with a stair-decomposition 
$B^*_1,B^*_2,\dotsc, B^*_m$. Since the untwirl preserves good edges, we know by 
Lemma~\ref{lem-good-paths} that $\pi^*$ contains the $k$-track $\tau_k$, and 
the occurrence of $\tau_k$ is block-preserving with respect to the 
stair-decomposition in which the first $a-1$ blocks of $\tau_k$ are empty and 
the remaining blocks have size~$k$. By Observation~\ref{obs-twirl}, we 
conclude that $\pi$ contains the $k$-spiral (if $a\equiv 1\mod 4$) or the 
reverse-complement of the $k$-spiral (if $a\equiv 3 \mod 4$).
\end{proof}

Theorem~\ref{thm-hledani-k-spiral} implies Theorem~\ref{veta_polynomialni_skew} 
in exactly the same way as Theorem~\ref{theorem_hledani_ktracku} implies 
Theorem~\ref{veta_polynomialni}. Theorem~\ref{veta_polynomialni_skew} is thus 
proved.

\section{Final remarks and open problems}
We have shown that for every proper subclass $\cC$ of $\Av(321)$, the 
$\cC$-Pattern PPM problem is polynomial-time solvable, and the same is true for 
proper subclasses of $\Cplus=\Av(2143,\allowbreak 3412,\allowbreak 3142)$. The 
proofs are based on the 
fact that for permutations in such a class $\cC$, the associated graph 
$G_\pi$ has bounded treewidth, which, by results of Ahal and 
Rabinovich~\cite{AR08_subpattern}, implies that $\cC$-Pattern PPM is 
polynomial. 

In fact, in all the cases when $\cC$-Pattern PPM is known to be polynomial, the 
class $\cC$ has bounded treewidth of~$G_\pi$. We may therefore ask whether in 
fact the boundedness of treewidth can precisely characterize the polynomial 
cases of $\cC$-Pattern PPM.

\begin{problem}
 Is the $\cC$-Pattern PPM problem NP-hard for every class $\cC$ for which the 
treewidth of~$G_\pi$ is unbounded?
\end{problem}

For the closely related decision problem $\cC$-PPM, we have obtained 
NP-hardness when $\cC$ is the class $\C{3}{0}=\Av(4321)$. This gives the first 
known example of a 
proper permutation class $\cC$ for which the problem is hard. We have 
subsequently shown that $\C{2}{1}$-PPM is NP-hard as well.
It follows that $\Av(\rho)$-PPM is NP-hard for any permutation $\rho$ not 
belonging to the finite set $\C{3}{0}\cap\C03\cap\C21\cap\C12$, and in 
particular, 
$\Av(\rho)$-PPM is hard for every $\rho$ of size at least~$10$. 
However, a precise characterization of the polynomial cases of $\cC$-PPM is 
currently out of our reach, even for principal classes~$\cC$.

\begin{problem}
 For which permutations $\rho$ can $\Av(\rho)$-PPM be solved in polynomial time?
\end{problem}

\section*{Acknowledgements}
The authors wish to thank Ida Kantor for many valuable discussions.


\begin{thebibliography}{10}
\expandafter\ifx\csname url\endcsname\relax
  \def\url#1{{\tt #1}}\fi
\expandafter\ifx\csname urlprefix\endcsname\relax\def\urlprefix{URL }\fi

\bibitem{Dagstuhl}
Pattern avoidance and genome sorting ({D}agstuhl seminar 16071) (2016), to
  appear in Dagstuhl Reports.

\bibitem{AR08_subpattern}
S.~Ahal and Y.~Rabinovich, On complexity of the subpattern problem, {\em SIAM
  J. Discrete Math.\/} {\bf 22}(2) (2008), 629--649.

\bibitem{AAAH}
M.~H. Albert, R.~E.~L. Aldred, M.~D. Atkinson and D.~A. Holton, Algorithms for
  pattern involvement in permutations, {\em Algorithms and computation
  ({C}hristchurch, 2001)\/}, vol. 2223 of {\em Lecture Notes in Comput.
  Sci.\/}, Springer, Berlin (2001) 355--367.

\bibitem{AABRSW10_staircase_decomp}
M.~H. Albert, M.~D. Atkinson, R.~Brignall, N.~Ru{\v{s}}kuc, R.~Smith and
  J.~West, Growth rates for subclasses of {${\rm Av}(321)$}, {\em Electron. J.
  Combin.\/} {\bf 17}(1) (2010), Research Paper 141, 16 pp.

\bibitem{ALLV_321}
M.~H. Albert, M.-L. Lackner, M.~Lackner and V.~Vatter, The complexity of
  pattern matching for 321-avoiding and skew-merged permutations,
  \href{http://arxiv.org/abs/1510.06051}{arXiv:1510.06051}, 2015.

\bibitem{Atkinson}
M.~D. Atkinson, Permutations which are the union of an increasing and a
  decreasing subsequence, {\em Electron. J. Combin.\/} {\bf 5} (1998), Research
  paper 6, 13 pp.

\bibitem{BBL}
P.~Bose, J.~F. Buss and A.~Lubiw, Pattern matching for permutations, {\em
  Inform. Process. Lett.\/} {\bf 65}(5) (1998), 277--283.

\bibitem{BL_runs}
M.-L. Bruner and M.~Lackner, A fast algorithm for permutation pattern matching
  based on alternating runs, {\em Algorithmica\/} {\bf 75}(1) (2016), 84--117.

\bibitem{ChW}
M.~S. Chang and F.~H. Wang, Efficient algorithms for the maximum weight clique
  and maximum weight independent set problems on permutation graphs, {\em
  Inform. Process. Lett.\/} {\bf 43}(6) (1992), 293--295.

\bibitem{ChCh14_grid}
C.~Chekuri and J.~Chuzhoy, Polynomial bounds for the grid-minor theorem,
  \href{https://arxiv.org/abs/1305.6577v4}{arXiv:1305.6577v4}, 2014.

\bibitem{Ch16_grid}
J.~Chuzhoy, Improved bounds for the excluded grid theorem,
  \href{http://arxiv.org/abs/1602.02629v1}{arXiv:1602.02629v1}, 2016.

\bibitem{Fox}
J.~Fox, {S}tanley--{W}ilf limits are typically exponential,
  \href{https://arxiv.org/abs/1310.8378v1}{arXiv:1310.8378v1}, 2013.

\bibitem{GM}
S.~Guillemot and D.~Marx, Finding small patterns in permutations in linear
  time, {\em Proceedings of the {T}wenty-{F}ifth {A}nnual {ACM}-{SIAM}
  {S}ymposium on {D}iscrete {A}lgorithms\/}, ACM, New York (2014) 82--101.

\bibitem{GV09_321}
S.~Guillemot and S.~Vialette, Pattern matching for 321-avoiding permutations,
  {\em Algorithms and computation\/}, vol. 5878 of {\em Lecture Notes in
  Comput. Sci.\/}, Springer, Berlin (2009) 1064--1073.

\bibitem{Ibarra}
L.~Ibarra, Finding pattern matchings for permutations, {\em Inform. Process.
  Lett.\/} {\bf 61}(6) (1997), 293--295.

\bibitem{KSW}
A.~E. K{\'e}zdy, H.~S. Snevily and C.~Wang, Partitioning permutations into
  increasing and decreasing subsequences, {\em J. Combin. Theory Ser. A\/} {\bf
  73}(2) (1996), 353--359.

\bibitem{Kin92_vertex_sep}
N.~G. Kinnersley, The vertex separation number of a graph equals its
  path-width, {\em Inform. Process. Lett.\/} {\bf 42}(6) (1992), 345--350.

\bibitem{Makinen}
E.~M{\"a}kinen, On the longest upsequence problem for permutations, {\em Int.
  J. Comput. Math.\/} {\bf 77}(1) (2001), 45--53.

\bibitem{RS86_minorsV_grid}
N.~Robertson and P.~D. Seymour, Graph minors. {V}. {E}xcluding a planar graph,
  {\em J. Combin. Theory Ser. B\/} {\bf 41}(1) (1986), 92--114.

\bibitem{RS88_minorsVII_paths}
N.~Robertson and P.~D. Seymour, Graph minors. {VII}. {D}isjoint paths on a
  surface, {\em J. Combin. Theory Ser. B\/} {\bf 45}(2) (1988), 212--254.

\bibitem{RS95_minorsXIII_wall}
N.~Robertson and P.~D. Seymour, Graph minors. {XIII}. {T}he disjoint paths
  problem, {\em J. Combin. Theory Ser. B\/} {\bf 63}(1) (1995), 65--110.

\bibitem{Schensted}
C.~Schensted, Longest increasing and decreasing subsequences, {\em Canad. J.
  Math.\/} {\bf 13} (1961), 179--191.

\bibitem{Vatter}
V.~Vatter, Permutation classes, {\em Handbook of enumerative combinatorics\/},
  Discrete Math. Appl. (Boca Raton), CRC Press, Boca Raton, FL (2015) 753--833.

\bibitem{YS}
V.~Yugandhar and S.~Saxena, Parallel algorithms for separable permutations,
  {\em Discrete Appl. Math.\/} {\bf 146}(3) (2005), 343--364.

\end{thebibliography}
\end{document}